 \newtheorem{theorem}{Theorem}[section]
\newtheorem{proposition}[theorem]{Proposition}
\newtheorem{remark}[theorem]{Remark}
\newtheorem{definition}[theorem]{Definition}
\newtheorem{corollary}[theorem]{Corollary}
\begin{document}
\begin{frontmatter}
\title{ Rotational surfaces of constant astigmatism  in space forms }
  \author[label1]{Rafael L\'opez}
 \ead{rcamino@ugr.es}
 \address[label1]{ Departamento de Geometr\'{\i}a y Topolog\'{\i}a\\ Instituto de Matem\'aticas (IEMath-GR)\\
 Universidad de Granada\\
 18071 Granada, Spain}
 \author[label2]{\'Alvaro P\'ampano}
 \ead{alvaro.pampano@ehu.eus}
 \address[label2]{Departament of  Mathematics\\ Faculty of Science and Technology\\
University of the Basque Country\\
48940 Bilbao, Spain}

\begin{abstract}
A surface in a Riemannian space   is called of constant astigmatism if the difference between the principal radii of curvatures at each point is a constant function. In this paper we give a   classification of all rotational surfaces of constant astigmatism in      space forms. We also prove that the generating curves of such surfaces are critical points   of a variational problem for a curvature energy.  Using the description of these curves, we locally construct all rotational surfaces of constant astigmatism as the associated binormal evolution surfaces from the generating curves.
\end{abstract}
\begin{keyword} constant astigmatism surface \sep critical curve \sep spherical rotational surface \sep binormal evolution surface 
\MSC[2010]   34C05, 37K25, 53A10
\end{keyword}
\end{frontmatter}

\section{Introduction}

In the    Euclidean space $\mathbb{R}^3$, a surface of constant astigmatism is a surface where the difference between the principal radii of curvatures $\rho_2-\rho_1$ at each point is a constant function. From the physical viewpoint, this difference $\rho_2-\rho_1$    measures the amplitude of astigmatism and thus, a surface where this difference is constant  has constant amplitude of astigmatism in the normal directions (\cite{gi,st}). The interest on these   curves lies on the property that their lifts to the space $\mathbb{R}^2\times\mathbb{S}^1$ endowed with a suitable sub-Riemannian structure are believed to be used by the brain to complete contours of pictures that are missing to the eye vision.   See \cite{p} and references therein for more details. Definitively, the   visual curve completion problem and  surfaces of constant astigmatism are geometrically related.

Surfaces of constant astigmatism were studied in early works of Bianchi and Ribaucour proving   that their focal surfaces have constant negative Gaussian curvature \cite{bi1,bi2,ku,ri}. For this reason, the Gauss equation  of these surfaces is related with the   sine-Gordon equation, which is known to be integrable in the sense of the soliton theory. Recently there has been an increasing interest in studying the Gauss equation of these surfaces using the theory of integrable systems: \cite{bm,hl0,hl,hm0,hm1,mp,pz}. For our purposes, we need to extend the notion of constant astigmatism surfaces in  any  $3$-dimensional Riemannian space.

\begin{definition} Let $M^3$ be a  $3$-dimensional Riemannian space and let $S$ be an oriented surface in $M^3$  with non-zero principal curvatures $\kappa_1$ and $\kappa_2$. We say that $S$ is a surface of  constant astigmatism if  there is a constant $c\in\mathbb{R}$ such that 
\begin{equation} \frac{1}{\kappa_1(p)}-\frac{1}{\kappa_2(p)}=c\label{relation}
\end{equation}
holds for any $p\in S$. 
\end{definition}

Notice that if   $c=0$ in   \eqref{relation},  then   $S$   is  totally umbilical, hence we will discard this trivial case and   we will  assume from now on that $c\not=0$.  The constant astigmatism equation \eqref{relation} can be also viewed as a relation between the principal curvatures and thus $S$ is a Weingarten surface. The class of Weingarten surfaces is a topic of great interest for geometers, especially after the works of Hopf, Chern and Hartman among others in the 1950s.  In the case of our study, equation \eqref{relation} establishes a linear relation between the principal radii of curvature.  A similar setting is to  consider a linear relation between the principal curvatures $\kappa_1$ and $\kappa_2$. In \cite{lp}, the authors have given   a complete description of all rotational surfaces in Euclidean space satisfying the relation $a\kappa_1+b\kappa_2=c$, where $a,b,c\in\mathbb{R}$.
 
In order to construct examples of surfaces of constant astigmatism, it is natural to impose some symmetry properties on the surface. A first remarkable class of surfaces are the rotational surfaces (or surfaces of revolution). Rotational surfaces  of constant astigmatism  in $\mathbb{R}^3$ were described by Lilienthal  proving that the profile curves   are the involutes of the tractrix curve that generates the pseudosphere. A picture of the generating curves of the Lilienthal's surfaces appears in \cite[Fig. 1]{bm} and in    figure \ref{curvasr21} of the present paper. In general, a surface of constant astigmatism is the involute of a surface of constant negative Gaussian curvature (\cite{bm}). On the other hand,  it has been proved in \cite{p}  that tractrices of the pseudosphere are the critical curves of a total curvature type variational problem. More generally,   the same paper proves that the profile curves of any rotational surface with constant negative Gaussian curvature are      critical curves   of the   total curvature type  energy
\begin{equation}\label{fff}
\mathcal{F}(\gamma)=\int_\gamma \sqrt{\kappa^2+a^2}
\end{equation}
for some   constant $a\not=0$. 

The aim of this paper is twofold. First, we extend the surfaces of constant astigmatism in   space forms, that is, including  the  sphere and the hyperbolic  space.  We will classify all rotational surfaces of constant astigmatism  in  space forms, giving a full description of them (in the hyperbolic space  we only consider rotational surfaces of spherical type). The second objective   is to  prove that the generating curves are solutions of a variational problem associated to a curvature energy involving the curvature $\kappa$ and the constant $c$ in \eqref{relation}. This energy is measured in a  suitable space of curves that deform the initial curve.

This paper is organized as follows.  In Section \ref{sec2} we define an energy functional $\mathbf{\Theta}_\mu$ for curves in $2$-space forms and we calculate the formula of the first variation of $\mathbf{\Theta}_\mu$. In Section \ref{sec3} we prove that the critical curves of $\mathbf{\Theta}_\mu$ are the generating curves of rotational surfaces of constant astigmatism  in  space forms. In Section \ref{sec4} we will prove the converse process   by evolving critical curves of $\mathbf{\Theta}_\mu$ under their associated binormal flow giving  a way of constructing all    rotational surfaces of constant astigmatism in  space forms. Once we have characterized the generating curves, we proceed to its classifications. Firstly, in Section \ref{sec5} we give some geometric properties of the critical curves of $\mathbf{\Theta}_\mu$ and  we associate to the critical curve a system of ODE  where we analyze the phase plane and its singular points.   Finally,   Section \ref{sec6} is devoted to classify and describe all extremal curves of $\mathbf{\Theta}_\mu$ in each space form: see Theorems \ref{curvesR2}, \ref{curvesS21}, \ref{curvesS22} and \ref{curvesH2} for precise statements. In particular, in Euclidean space we revisit the Lilienthal's surfaces obtaining a full description of all cases in terms of an integration constant. 

\section{The curvature energy problem}\label{sec2}

Let ${\mathbb M}^n(\rho)$ denote a   $n$-dimensional Riemannian space form of constant sectional curvature $\rho$, that is, the Euclidean space ($\rho=0$), the round sphere $\mathbb{S}^n(\rho)$ ($\rho>0$) and the hyperbolic space $\mathbb{H}^n(\rho)$ ($\rho<0$). We   denote by $\widetilde{\nabla}$ the Levi-Civita connection on ${\mathbb M}^n(\rho)$.  For $n=3$, let $\gamma(s)$ be a curve   in ${\mathbb M}^3(\rho)$ parametrized by  arc-length  and let $T(s)=\gamma'(s)$ represent the \emph{unit tangent vector field} of $\gamma$. If $\widetilde{\nabla}_T T(s)$ vanishes, then $\gamma$ is a geodesic and  we say that the curvature of $\gamma$ is identically zero. If $\gamma(s)$ is not a geodesic,   then $\gamma(s)$ is a \textit{Frenet curve} of
rank $2$ or $3$  and the  standard \textit{Frenet frame} along
$\gamma(s)$ is defined by $\{T,N, B\}$, where $N$ and $B$ are the
\emph{unit normal vector field} and \emph{unit binormal vector field},
respectively. The
 Frenet equations are 
\begin{equation}\label{frenet}
\begin{split}
\widetilde{\nabla}_T T(s)&=\kappa(s) N (s)\\
\widetilde{\nabla}_T N(s)&=-\kappa(s) T(s) +\tau(s) B(s)\\
\widetilde{\nabla}_T B(s)&= -\tau (s)N (s),
\end{split}
\end{equation}
where $\kappa$ and $\tau$ are  the \textit{curvature} and  the \textit{torsion}  of $\gamma$, respectively. Notice that  if the rank
of $\gamma$ is $2$, which occurs when $\tau =0$, the curve can be assumed to lie in a totally geodesic surface which is identified with a $2$-space form ${\mathbb M}^2(\rho)$. The curves whose torsion vanishes identically are called \emph{planar curves}. Conversely,  a curve in ${\mathbb M}^2(\rho)$ can be viewed  assuming that ${\mathbb M}^2(\rho)$ is a totally geodesic surface of ${\mathbb M}^3(\rho)$ with $\tau=0$ and the binormal vector 
$B=T\times N$ is   well defined and constant.

  Let  $\gamma:[0,L]\rightarrow {\mathbb M}^2(\rho)$ be a  curve parametrized   by the arc-length parameter $s$. For any real constant $\mu\in\mathbb{R}$,  called the {\it energy index}, define the   \emph{curvature
energy functional}
\begin{equation}
\mathbf{\Theta}_\mu(\gamma )=\int_{\gamma} \kappa\, {\rm e}^{\mu/\kappa}=
\int_{0}^L  \kappa(s){\rm e}^{\mu/\kappa(s)} ds.\label{energy}
\end{equation}
 Notice that if $\mu=0$ the curvature energy $\mathbf{\Theta}_0$  is just the  functional $\mathcal{F}$ in \eqref{fff} for $a=0$ and it represents the total curvature of $\gamma$. From now on, we assume  $\mu\neq 0$. We consider  $\mathbf{\Theta}_\mu$ acting on the   space $\Omega_{p_0p_1}$ of (non-geodesic) planar curves     joining two given points $p_0, p_1\in {\mathbb M}^2(\rho)$:
$$
\Omega_{p_0p_1}=\{\iota:[0,1]\rightarrow {\mathbb M}^2(\rho): \iota (i)=p_i, i\in\{0,1\}, \frac{d\iota}{dt}(t)\neq 0,
\forall t\in [0,1] , \kappa\neq 0 \}.$$

Let $\Gamma$ be a variation of $\gamma$, that is, 
$\Gamma=\Gamma(s,t):[0,L]\times
(-\varepsilon,\varepsilon)\rightarrow {\mathbb M}^2(\rho)$ is a smooth map with
$\Gamma(s,0)=\gamma(s)$ and let  $W=W(s)=\frac{\partial\Gamma}{\partial t}(s,0)$ be the variational vector field
along     $\gamma$. We have defined a family of curves $\{\Gamma(\text{-},t):t\in (\varepsilon,\varepsilon)\}$ and, as usually, we   write
$V=V(s,t)=\frac{\partial\Gamma}{\partial s}(s,t)$,
$W=W(s,t)=\frac{\partial\Gamma}{\partial t}(s,t)$, $v=v(s,t)=\vert V(s,t)\vert$,  and so on. For the next computations, we follow \cite{AGP} and \cite{Langer-Singer}. By using   the Frenet equations \eqref{frenet},    the variations of $v$ and $\kappa$  in the direction of  $W$ are
\begin{eqnarray*}
W(v) & = & v\langle\widetilde{\nabla}_T W,T\rangle,\\
W(\kappa) & = & \langle\widetilde{\nabla}^2_T W,N\rangle-2\kappa\langle\widetilde{\nabla}_T W,T\rangle +\rho\langle W,N\rangle.  
\end{eqnarray*}
  After a standard
computation involving integration by parts and the above expressions of $W(v)$ and $W(\kappa)$,     the \emph{first variation formula} of $\mathbf{\Theta}_\mu$ is
\begin{equation}
{\frac{d}{dt}}{\Big|}_{t=0}\mathbf{\Theta}_\mu(\Gamma(\text{-},t))=\
 \int_{0}^{L}\langle\mathcal{E}(\gamma),W\rangle ds +\mathcal{B}
 \left[ W,\gamma \right] _{0}^{L},  \label{1fv} \nonumber
\end{equation}
where   
\begin{eqnarray*} 
&&\mathcal{E}(\gamma)  =\widetilde{\nabla}_T \mathcal{J}-R(T,\mathcal{K})T=\widetilde{\nabla}_T \mathcal{J}+\rho\mathcal{K},  \\ 
&&\mathcal{B}\left[ W,\gamma \right] _{0}^{L} = \left[\langle\mathcal{K},\widetilde{\nabla}_T W\rangle - \langle \mathcal{J},W\rangle\right] _{0}^{L}, 
\end{eqnarray*}
and
\begin{eqnarray}
\mathcal{K}(\gamma) &=&\left(1-\frac{\mu}{\kappa}\right){\rm e}^{\mu/\kappa}N,  \label{kfield} \\
\mathcal{J}(\gamma)&
=&\widetilde{\nabla}_T \mathcal{K}+\left(\kappa-2\mu\right){\rm e}^{\mu/\kappa} T. \label{ejk}
\end{eqnarray}

A curve $\gamma\subset\Omega_{p_0p_1}$  is called  a \emph{critical curve} or \emph{extremal curve}  if $\mathcal{E}(\gamma)=0$ for any variation $\Gamma$ of $\gamma$. Notice that this is an abuse of notation because proper criticality depends on the boundary conditions, as it is clear by the boundary term $\mathcal{B}
 \left[ W,\gamma \right] _{0}^{L}$ in the first variation formula. However, under suitable boundary conditions, curves satisfying $\mathcal{E}(\gamma)=0$ are going to be proper critical curves. For our purposes, we just need to consider curves satisfying $\mathcal{E}(\gamma)=0$ and   for simplicity, we will use the name critical curve (or extremal curve) to denote a curve $\gamma\subset\Omega_{p_0 p_1}$ satisfying  $\mathcal{E}(\gamma)=0$ for any variation $\Gamma$ of $\gamma$. 

Using the first and second Frenet equations in  \eqref{frenet}, a  straightforward computation shows that $\mathcal{E}(\gamma)$ has no component in $T$. Then, identity $\mathcal{E}(\gamma)=0$ is equivalent to the vanishing of the component in $N$, obtaining the Euler-Lagrange equation for $\mathbf{\Theta}_\mu$.

\begin{proposition} Let $\gamma=\gamma(s)$ be a non-geodesic curve in ${\mathbb M}^2(\rho)$. Then   $\gamma$ is a critical curve of the curvature energy $\mathbf{\Theta}_\mu$ if and only if
\begin{equation}
\frac{d^2}{ds^2}\left(\left(1-\frac{\mu}{\kappa}\right){\rm e}^{\mu/\kappa}\right)+\left(\rho\left(1-\frac{\mu}{\kappa}\right)-\mu\kappa\right){\rm e}^{\mu/\kappa}=0. \label{EL}
\end{equation}
\end{proposition}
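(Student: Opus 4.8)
The plan is to compute $\mathcal{E}(\gamma)=\widetilde{\nabla}_T\mathcal{J}+\rho\mathcal{K}$ explicitly in the Frenet frame, using the expressions \eqref{kfield} and \eqref{ejk} together with the Frenet equations \eqref{frenet} specialized to the planar case $\tau=0$. Since it is already asserted that $\mathcal{E}(\gamma)$ has no component along $T$, the whole problem reduces to extracting the $N$-component and setting it to zero. Throughout, I write $f=f(\kappa)=\left(1-\frac{\mu}{\kappa}\right){\rm e}^{\mu/\kappa}$ so that $\mathcal{K}=fN$, and I treat $f$ as a function of $s$ through $\kappa(s)$, so that $f'=\frac{df}{ds}$.

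First I would simplify $\mathcal{J}$. Differentiating $\mathcal{K}=fN$ and using $\widetilde{\nabla}_T N=-\kappa T$ (as $\tau=0$), one gets $\widetilde{\nabla}_T\mathcal{K}=f'N-f\kappa T$. Substituting into \eqref{ejk} and collecting the $T$-coefficient, the term $-f\kappa=-(\kappa-\mu){\rm e}^{\mu/\kappa}$ combines with $(\kappa-2\mu){\rm e}^{\mu/\kappa}$ to leave exactly $-\mu{\rm e}^{\mu/\kappa}$. Hence $\mathcal{J}=f'N-\mu{\rm e}^{\mu/\kappa}T$, a pleasantly simple form in which the $N$-coefficient is $f'$ and the $T$-coefficient is $-\mu{\rm e}^{\mu/\kappa}$.

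Next I would differentiate $\mathcal{J}$ once more. Writing $h=-\mu{\rm e}^{\mu/\kappa}$ for the $T$-coefficient and applying $\widetilde{\nabla}_T T=\kappa N$ and $\widetilde{\nabla}_T N=-\kappa T$, I obtain $\widetilde{\nabla}_T\mathcal{J}=(f''+h\kappa)N+(h'-f'\kappa)T$. Adding $\rho\mathcal{K}=\rho f N$, the $N$-component of $\mathcal{E}(\gamma)$ is $f''+h\kappa+\rho f$. Substituting back $h\kappa=-\mu\kappa{\rm e}^{\mu/\kappa}$ and $f=\left(1-\frac{\mu}{\kappa}\right){\rm e}^{\mu/\kappa}$ reproduces precisely the left-hand side of \eqref{EL}. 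Thus $\mathcal{E}(\gamma)=0$ forces \eqref{EL}, and conversely.

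There is no genuine obstacle here, since the argument is a direct calculation, but the one step worth double-checking is the vanishing of the $T$-component of $\widetilde{\nabla}_T\mathcal{J}$, namely $h'-f'\kappa=0$. Computing $h'=\frac{\mu^2\kappa'}{\kappa^2}{\rm e}^{\mu/\kappa}$ and $f'=\frac{\mu^2\kappa'}{\kappa^3}{\rm e}^{\mu/\kappa}$ shows that $f'\kappa=h'$, so the $T$-component indeed cancels. This is consistent with the assertion preceding the proposition and serves as an internal check that the simplification of $\mathcal{J}$ was carried out correctly; with it confirmed, the Euler--Lagrange equation \eqref{EL} is exactly the $N$-component equation.
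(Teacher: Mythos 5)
Your proposal is correct and follows essentially the same route as the paper: the paper's proof consists precisely of computing $\mathcal{E}(\gamma)=\widetilde{\nabla}_T\mathcal{J}+\rho\mathcal{K}$ in the Frenet frame with $\tau=0$, noting that the $T$-component vanishes, and reading off the $N$-component as \eqref{EL}. You have simply carried out in detail the ``straightforward computation'' the paper alludes to, including the useful simplification $\mathcal{J}=f'N-\mu{\rm e}^{\mu/\kappa}T$ and the explicit check $h'=f'\kappa$, all of which is accurate.
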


We study the Euler-Lagrange equation \eqref{EL} in the particular case that the curvature of  $\gamma$  is constant.  

\begin{corollary}\label{ccc} The only non-geodesic critical curves of $\mathbf{\Theta}_\mu$ with constant curvature $\kappa_0$ in ${\mathbb M}^2(\rho)$ are the following:
\begin{enumerate}
\item Case  $\mathbb{R}^2$. There are not critical curves.
\item Case $\mathbb{S}^2(\rho)$. Then $\mu\in\left(0,\sqrt{\rho}/2\right]$. Moreover, if $\mu<\sqrt{\rho}/2$, then there are two solutions, which correspond with two parallels and if $\mu=\sqrt{\rho}/2$, then the only solution is a  circle with curvature $\kappa_0=\sqrt{\rho}$.
\item Case  $\mathbb{H}^2(\rho)$. Then there always are two   critical curves with constant curvature, namely, a  circle and an hypercycle.
\end{enumerate}
\end{corollary}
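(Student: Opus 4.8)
The plan is to substitute a constant curvature $\kappa\equiv\kappa_0\neq 0$ directly into the Euler--Lagrange equation \eqref{EL} and thereby reduce the whole problem to an algebraic one. Since the argument of the operator $d^2/ds^2$ in \eqref{EL} depends only on $\kappa$, it is constant along $\gamma$ and its second derivative vanishes identically. As ${\rm e}^{\mu/\kappa_0}>0$, equation \eqref{EL} then collapses to $\rho\left(1-\mu/\kappa_0\right)-\mu\kappa_0=0$, and multiplying by $\kappa_0\neq 0$ this becomes the quadratic
\begin{equation*}
\mu\,\kappa_0^{2}-\rho\,\kappa_0+\rho\,\mu=0.
\end{equation*}
Thus the constant-curvature critical curves are exactly those whose geodesic curvature solves this quadratic, and the corollary amounts to discussing its roots according to the sign of $\rho$, keeping in mind that $\mu\neq 0$ and that $\gamma$ is non-geodesic.

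For the Euclidean case $\rho=0$ the equation reduces to $\mu\kappa_0^{2}=0$; since $\mu\neq 0$ this forces $\kappa_0=0$, contradicting that $\gamma$ is non-geodesic, so there are no solutions. For $\rho>0$ I would compute the discriminant $\Delta=\rho\left(\rho-4\mu^{2}\right)$ and note that real roots exist precisely when $\mu^{2}\leq\rho/4$, i.e. $|\mu|\leq\sqrt{\rho}/2$. By Vieta's relations the product of the roots equals $\rho>0$ and their sum equals $\rho/\mu$, so both roots share the sign of $\mu$; requiring a positive curvature therefore singles out $\mu>0$ and yields the range $\mu\in\left(0,\sqrt{\rho}/2\right]$. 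When $\mu<\sqrt{\rho}/2$ the two distinct positive roots give two parallels of $\mathbb{S}^2(\rho)$, while at $\mu=\sqrt{\rho}/2$ the double root is $\kappa_0=\rho/(2\mu)=\sqrt{\rho}$, a single circle.

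For $\rho<0$ the same discriminant $\Delta=\rho\left(\rho-4\mu^{2}\right)$ is a product of two negative factors, so $\Delta>0$ and there are always two distinct real roots. To name the corresponding curves I would test $f(\kappa)=\mu\kappa^{2}-\rho\kappa+\rho\mu$ against the horocyclic threshold $\pm\sqrt{-\rho}$: a direct evaluation gives $f(\sqrt{-\rho})$ and $f(-\sqrt{-\rho})$ of opposite and nonzero signs, so, since both threshold points lie outside the interval between the roots only when the parabola takes equal signs there, exactly one root lands in $\left(-\sqrt{-\rho},\sqrt{-\rho}\right)$ and the other lies outside it. Using the standard classification of constant-curvature curves in $\mathbb{H}^2(\rho)$---where $|\kappa_0|<\sqrt{-\rho}$ gives a hypercycle and $|\kappa_0|>\sqrt{-\rho}$ gives a circle---this identifies the two solutions as one circle and one hypercycle; as neither threshold evaluation vanishes, a horocycle never occurs.

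The algebraic steps (expanding the discriminant, applying Vieta's relations) are immediate. The only genuinely delicate points are the bookkeeping of the curvature sign, which is what pins down the interval $\left(0,\sqrt{\rho}/2\right]$ on the sphere rather than its symmetric counterpart, and the threshold comparison $f(\pm\sqrt{-\rho})$ that separates circle from hypercycle in the hyperbolic case. These geometric identifications, rather than the computations, are where I would expect to spend most of the care.
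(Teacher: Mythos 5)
Your proposal is correct and follows essentially the same route as the paper: substituting constant curvature into \eqref{EL} kills the derivative term and reduces everything to the quadratic $\mu\kappa_0^2-\rho\kappa_0+\rho\mu=0$, whose roots are then discussed according to the sign of $\rho$. The only cosmetic difference is that the paper writes the roots explicitly as $\kappa_0=\bigl(\rho\pm\sqrt{\rho^2-4\mu^2\rho}\bigr)/(2\mu)$ and checks $\kappa_0^2\lessgtr-\rho$ directly, whereas you work with the discriminant, Vieta's relations and the sign of $f(\pm\sqrt{-\rho})$ --- an equally valid (and in the hyperbolic case slightly cleaner) way to reach the same conclusions.
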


\begin{proof}
From equation \eqref{EL}, if $\kappa=\kappa_0$ is constant then  
\begin{equation}
\kappa=\kappa_0=\frac{\rho\pm\sqrt{\rho^2-4\mu^2\rho}}{2\mu}. \label{constantcurvature}
\end{equation}
In particular, there are restrictions on the values of $\mu$ in relation with the constant $\rho$, namely, $\rho^2\geq 4\mu^2\rho$. It is clear that if $\rho=0$, then \eqref{constantcurvature} implies that $\kappa_0=0$, which is not possible.  

If $\rho>0$, then $\mu\leq\sqrt{\rho}/2$. When $\mu<\sqrt{\rho}/2$, we have two solutions, which correspond with two parallels of $\mathbb{S}^2(\rho)$. On the other hand, in the case that $\mu=\sqrt{\rho}/2$ there is only one solution that corresponds with a  circle of  curvature $\kappa_0=\sqrt{\rho}$.

Finally, if $\rho<0$, then the condition $\rho^2> 4\mu^2\rho$ is always satisfied, obtaining two critical curves with constant curvature. From \eqref{constantcurvature} we see that the curvature corresponding with the plus sign satisfies that $\kappa_0^2<-\rho$ and  the critical curve is an hypercycle. Similarly, for the minus sign we get $\kappa_0^2>-\rho$ and the curve  is a  circle.  
\end{proof}

We finish this section obtaining   a first integral of the Euler-Lagrange equation. Observe that from   \eqref{EL}, and after a change of orientation on $\gamma$, we can assume  $\mu>0$.  Set $b=\mu/\kappa$. Then equation \eqref{EL} writes as
\begin{equation}\label{E2}
(b^2)''+b'(b^2)'-2\left(\rho\left(1-b\right)-\frac{\mu^2}{b}\right)=0.
\end{equation}
 Define the function
$$ f(s)={\rm e}^{2b}\left(b^2\left(b'\right)^2+\rho\left(1-b\right)^2+\mu^2\right).$$
 The derivative of $f$ is  
\begin{eqnarray*}
f'(s)&=&{\rm e}^{2b}\left( 2\mu^2b'-\rho(1-b)\left(b^2\right)'+2b^2\left(b'\right)^3+\frac{1}{2}\left(b^2\right)'\left(b^2\right)''\right)\\&=& 2\mu b'{\rm e}^{2b}\left(\mu-b\kappa\right) =0,
\end{eqnarray*}
where \eqref{EL} has been used in the second identity. It follows that  there exists a constant $d\in\mathbb{R}$ such that $f(s)=d$.
Consequently, by the definition of $f$ and $b$, the derivative $\kappa_s$ of $\kappa$ satisfies
\begin{equation}
\kappa_s^2=\frac{\kappa^4}{\mu^4}\left(d\kappa^2{\rm e}^{-2\mu/\kappa}-\mu^2\kappa^2-\rho\left(\kappa-\mu\right)^2\right).\label{fin}
\end{equation}
 
We study the Euler-Lagrange equation \eqref{EL}. Let us introduce the following notation:
\begin{equation}\label{xx}
 x={\rm e}^{\mu/\kappa},\quad y=x_s.
 \end{equation}
Then \eqref{EL} writes as 
$$\frac{d^2}{ds^2}\left(\left(1-\log x\right)x\right)+\left(\rho\left(1-\log x\right)-\frac{\mu^2}{\log x}\right)x=0.$$
After some computations, this equation   is equivalent to the following system of ordinary differential equations 
\begin{equation}\label{phase}
\left\{\begin{array}{ll}
x'&=y\\
y'&=\dfrac{-\mu ^2 x^2-\rho  x^2 \log ^2x+\rho  x^2 \log x-y^2 \log x}{x \log ^2x}.
\end{array}\right.
\end{equation}
Using the first integral of the Euler-Lagrange equation \eqref{fin}, we   obtain the following result.

\begin{proposition} Let   $\gamma\subset {\mathbb M}^2(\rho)$ be a critical curve of the energy  $\mathbf{\Theta}_\mu$. Then there exists a constant $d\in\mathbb{R}$ such that $\gamma$ in coordinates \eqref{xx} satisfies\begin{equation}
F(x,y):=y^2\log^2x+\mu^2x^2+\rho\left(1-\log x\right)^2x^2=d,\quad x\in\mathbb{R}^+\setminus\{1\}. \label{F(x,y)}
\end{equation}
 In particular,  the constant $d$   is positive if $\rho\geq 0$.
\end{proposition}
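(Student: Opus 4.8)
The plan is to recognize that the quantity $F(x,y)$ in \eqref{F(x,y)} is nothing but the first integral $f(s)$ already produced just above, merely rewritten in the coordinates $(x,y)$ introduced in \eqref{xx}. Thus the proposition should follow immediately from the identity $f(s)=d$ established in the derivation of \eqref{fin}, once the change of variables is carried out. The first thing I would do is record the two elementary relations forced by the definitions $x={\rm e}^{\mu/\kappa}$ and $y=x_s$. Writing $b=\mu/\kappa$ as before, one has $b=\log x$ directly from $x={\rm e}^{\mu/\kappa}$, and differentiating $x={\rm e}^{b}$ along $\gamma$ gives $y=x_s={\rm e}^{b}b'=x\,b'$, hence $b'=y/x$. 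These are the only two substitutions needed.

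Next I would substitute $b=\log x$, $b'=y/x$ and ${\rm e}^{2b}=x^2$ into the explicit first integral
$$f(s)={\rm e}^{2b}\left(b^2\left(b'\right)^2+\rho\left(1-b\right)^2+\mu^2\right).$$
Distributing the factor $x^2$, the term $b^2(b')^2=(\log x)^2\,y^2/x^2$ becomes $y^2\log^2x$, while the remaining two contributions become $\rho\,(1-\log x)^2x^2$ and $\mu^2x^2$, which together are exactly $F(x,y)$. Since $f(s)=d$ was shown to be constant along any critical curve, this yields $F(x,y)=d$. The restriction $x\in\mathbb{R}^+\setminus\{1\}$ is automatic: $x={\rm e}^{\mu/\kappa}>0$ always, and $x=1$ would force $\mu/\kappa=0$, which is impossible because $\mu\neq 0$ while $\kappa$ is finite and non-zero for a non-geodesic curve in $\Omega_{p_0p_1}$.

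Finally, for the sign claim I would simply inspect the three summands of $F$: when $\rho\geq 0$ each of $y^2\log^2x$, $\mu^2x^2$ and $\rho(1-\log x)^2x^2$ is nonnegative, and the middle one satisfies $\mu^2x^2>0$ since $\mu\neq 0$ and $x>0$; hence $d=F(x,y)>0$. The only genuine bookkeeping is verifying the algebraic cancellation in the substitution step, so I do not expect a real obstacle here: the entire content of the statement is already contained in the conservation law $f=d$ derived above, and this proposition is its coordinate expression together with an elementary positivity observation.
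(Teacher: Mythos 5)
Your proposal is correct and follows essentially the same route as the paper: the paper derives the conservation law $f(s)=d$ for $f(s)={\rm e}^{2b}\bigl(b^2(b')^2+\rho(1-b)^2+\mu^2\bigr)$, $b=\mu/\kappa$, and then obtains the proposition precisely by rewriting this first integral in the coordinates $x={\rm e}^{\mu/\kappa}$, $y=x_s$ (via $b=\log x$, $b'=y/x$), which is exactly your substitution. You merely spell out the change of variables and the positivity check that the paper leaves implicit, so there is nothing to object to.
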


Therefore, the implicit equation $F(x,y)=d$, \eqref{F(x,y)}, describes the orbits of the ODE system \eqref{phase}.

\section{Variational characterization of profile curves}\label{sec3}

A surface $S$   in ${\mathbb M}^3(\rho)$ is called  \emph{rotationally symmetric}, or simply a \emph{rotational surface}, if $S$ is invariant under the action of a one-parameter group of rotations of ${\mathbb M}^3(\rho)$. This group leaves a  geodesic $\alpha$ pointwise fixed called  the \emph{rotation axis}. In  $\mathbb{R}^3$ and $\mathbb{S}^3(\rho)$, any one-parameter group of rotations    is isomorphic to $\mathbb{S}^1$ (and the orbits are circles), but in the hyperbolic space $\mathbb{H}^3(\rho)$ there are three types of rotations (\cite{DoCarmo-Dajczer2}). As we will see, our interest in the hyperbolic space focuses only for   those  groups of rotations that are isomorphic to $\mathbb{S}^1$, which are called     \emph{spherical rotations}.

Let $S\subset {\mathbb M}^3(\rho)$ be a surface invariant by the group of rotations   $\{\phi_t: t\in\mathbb{R}\}$ and let $\gamma$ be a profile curve of $S$. Since $\gamma$ is a planar curve, we can assume that $\gamma$ lies  fully in a totally geodesic surface of ${\mathbb M}^3(\rho)$, which we identify with a $2$-space form ${\mathbb M}^2(\rho)$. We prove that   $\gamma$ has a variational characterization if $S$ is a surface of constant astigmatism.
 
\begin{theorem}\label{vc} Let $S$ be a rotational surface in ${\mathbb M}^3(\rho)$ satisfying  the constant astigmatism equation \eqref{relation}. If $\gamma$ is a profile curve of $S$, then   $\gamma$ satisfies the Euler-Lagrange equation \eqref{EL} of the curvature energy
 \begin{equation} \mathbf{\Theta}_\mu(\gamma)=\int_\gamma \kappa\, {\rm e}^{\mu/\kappa}\,ds, \label{energy0} \end{equation}
where $\mu=1/c$ and $\kappa$ is the curvature of $\gamma$.
\end{theorem}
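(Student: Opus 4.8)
The plan is to reduce the surface geometry to a few scalar relations among the principal curvatures and then eliminate all auxiliary data. First I would parametrize the profile curve $\gamma$ by arc length $s$ inside the totally geodesic ${\mathbb M}^2(\rho)$ carrying it, and introduce Fermi-type coordinates around the rotation axis: let $r(s)$ be the distance from $\gamma(s)$ to the axis and $\phi(s)$ the angle the unit tangent makes with the axis direction. With the space-form functions $S_\rho,C_\rho$ (satisfying $S_\rho''=-\rho S_\rho$, $C_\rho=S_\rho'$, $C_\rho^2+\rho S_\rho^2=1$), the orbit of $\gamma(s)$ is a circle whose radius function is $\sigma(s)=S_\rho(r(s))$. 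Because the meridians are geodesics of $S$ and the unit normal of $S$ lies in the profile plane along $\gamma$, one principal curvature is exactly the geodesic curvature of the profile, $\kappa_1=\kappa$, while the normal curvature in the parallel direction is $\kappa_2=-\frac{C_\rho(r)}{S_\rho(r)}\cos\phi$ (the sign depending on the chosen orientation of the normal).

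Next I would record two structural identities for the induced warped metric $ds^2+\sigma(s)^2\,dv^2$, which let me discard $r$ and $\phi$. Writing $W:=\sigma'/\sigma$, the Codazzi equation reads $\kappa_2'=(\kappa_1-\kappa_2)\,W$, while the Gauss equation $K_S=\rho+\kappa_1\kappa_2$ together with the formula $K_S=-\sigma''/\sigma$ for such a metric yields $W'+W^2+\kappa_1\kappa_2+\rho=0$. These two relations, with $\kappa_1=\kappa$, encode everything needed.

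Now I would impose the constant astigmatism condition \eqref{relation}. With the natural labeling $\kappa_1=\kappa$ and $\mu=1/c$ it reads $\frac1\kappa-\frac1{\kappa_2}=\frac1\mu$, so $\kappa_2$ (and $\kappa_1\kappa_2$) become explicit functions of $\kappa$. Differentiating gives $\kappa_2'=(\kappa_2/\kappa)^2\kappa'$; feeding this into the Codazzi relation solves for $W=\frac{\kappa_2^2\kappa'}{\kappa^2(\kappa-\kappa_2)}$ in terms of $\kappa$ and $\kappa'$ alone. Substituting $W$ and $\kappa_1\kappa_2$ into the Gauss identity $W'+W^2+\kappa_1\kappa_2+\rho=0$ produces a second-order ODE in $\kappa$ only. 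Setting $\beta:=\mu/\kappa$ and simplifying, I expect the result to factor as $(\beta-1)$ times the $e^{\mu/\kappa}$-free form of \eqref{EL}, namely $\beta\beta''+(1+\beta)(\beta')^2-\rho(1-\beta)+\mu^2/\beta=0$; dividing out the factor $\beta-1=\mu/\kappa-1$ and restoring the never-vanishing factor $e^{\mu/\kappa}$ gives precisely \eqref{EL}.

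The main obstacle is twofold. Conceptually, the delicate point is fixing orientation conventions so that the pairing $\kappa_1=\kappa$, $\kappa_2=-\frac{C_\rho}{S_\rho}\cos\phi$ is taken with respect to a \emph{single} choice of unit normal and makes both Codazzi and Gauss hold with the signs above; a wrong sign here flips terms and yields a spurious equation rather than \eqref{EL}, and it is exactly this sign choice that pins down the identification $\mu=1/c$. Computationally, the remaining step is the algebraic reduction after substitution: it is routine but unforgiving, and one must track the extraneous factor $\beta-1$ so as to divide it out legitimately, the locus $\kappa\equiv\mu$ (where $\kappa_2=\infty$) being excluded or handled by continuity. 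Finally, the restriction to spherical rotations in $\mathbb{H}^3(\rho)$ is what guarantees the orbits are genuine circles, so that $\sigma=S_\rho(r)$ and all of the above formulas apply.
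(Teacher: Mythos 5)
Your proposal is correct, and it reaches \eqref{EL} by a genuinely different elimination than the paper's proof. The paper works with the single combined Gauss--Codazzi equation \eqref{Gauss-Codazzi}: it integrates it once, passes to $\kappa$ as independent variable via the Inverse Function Theorem (hence only when $\kappa$ is non-constant, the constant case being treated separately), writes the orbit-length function as $G=\dot P(\kappa)$, and uses the astigmatism relation \eqref{three} to turn \eqref{gc} into a first-order linear ODE for $P$, solved explicitly by $P(\kappa)=\kappa\,{\rm e}^{\mu/\kappa}-\lambda$; equation \eqref{gc} is then literally \eqref{EL}. You instead keep $s$ as the variable, use Codazzi plus the astigmatism relation to eliminate the warping function --- your expression simplifies to $W=\mu^2\kappa'/\bigl(\kappa^2(\kappa-\mu)\bigr)$ --- and substitute into the Gauss identity. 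I have checked this substitution: with $\beta=\mu/\kappa$ it yields exactly $(1-\beta)\bigl[\beta\beta''+(1+\beta)(\beta')^2-\rho(1-\beta)+\mu^2/\beta\bigr]=0$, i.e. $(1-\beta)$ times one half of the paper's equation \eqref{E2}, so the factorization you predicted is right up to an overall sign, and dividing by $1-\beta$ is legitimate since $\kappa\equiv\mu$ would force $\lvert\kappa_2\rvert=\infty$. Your sign bookkeeping also coheres: labelling the meridian curvature as $+\kappa$ with the normal opposite to the paper's choice ($\kappa_1=h_{22}$, $\kappa_2=-\kappa$) leaves \eqref{relation} invariant, so $\mu=1/c$ comes out the same. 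What your route buys: uniformity (the constant-curvature case needs no separate treatment --- there $W=0$ and Gauss gives directly $\mu\kappa^2=\rho(\kappa-\mu)$, recovering \eqref{constantcurvature}), no Inverse Function Theorem, and no auxiliary integration constant $\lambda$. What the paper's route buys: the explicit potential \eqref{pkk}, which is not a throwaway, since it is reused in Section \ref{sec5} for the parametrization \eqref{gg} and the orbit-curvature computation; your proof would need one further integration of $W$ (giving $\sigma=C\,(1-\mu/\kappa){\rm e}^{\mu/\kappa}=C\,\dot P(\kappa)$, secretly the same function) to recover it.
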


\begin{proof}
 Denote by $\eta$ the unit normal vector field of $S$ and let $\xi$ be  the Killing vector field which is the infinitesimal generator of the rotations that leave $S$ invariant. Because the result is local, let us take   Fermi geodesic coordinates  
$X:U\subset\mathbb{R}^2\rightarrow S$ on $S$, 
\begin{equation}
X(s,t)=\phi_t\left(\gamma(s)\right),\quad \xi=\frac{\partial}{\partial t},\label{par}
\end{equation} 
where $s$ measures the arc-length
along geodesics orthogonal to $\xi$ and 
$\gamma(s)=X(s,0)$.   By \eqref{par},  the curve  $\gamma$ and
all its copies   
$\gamma_t(s)=\phi_t(\gamma(s)) $  are arc-length
parametrized planar geodesics of $S_\gamma$ which are orthogonal to
$\xi$.  Moreover, $\gamma$ and all $\gamma_t$ are not geodesics   in
${\mathbb M}^3(\rho)$ by \eqref{relation} and thus   $\gamma_t(s)$ are Frenet curves. Denote the  Frenet frame of $\gamma_t$ as $\{T(s,t),N(s,t),B(s,t)\}$ and by $\kappa(s,t)$ its curvature in ${\mathbb M}^3(\rho)$.

On the other hand,    the Gauss-Codazzi equations  of $S$ in ${\mathbb M}^3(\rho)$ have a simple expression, namely, 
\begin{equation}
 \frac{\partial}{\partial s}\left(\frac{1}{\kappa}\left(G_{ss}+G(\kappa^2+\rho)\right)\right) -\kappa_s G=0, \label{Gauss-Codazzi}
\end{equation}
where $G$ is the length of the Killing vector field $\xi$, that is, $G(s)^2=\langle X_t,X_t\rangle$. Here we refer the reader to \cite[Sec. 3]{AGP}  for details. Furthermore, not only $G(s)$, but also all the involved functions depend only on $s$. The principal curvatures of $S$ are $\kappa_1(s)=h_{22}(s)$, the second coefficient of the second fundamental form, and    $\kappa_2(s)=-\kappa(s)$. Since
\begin{equation}\label{h22}
h_{22}=\frac{1}{\kappa}\left(\frac{G_{ss}}{G}+\rho\right),
\end{equation}
  the relation \eqref{relation} becomes
\begin{equation}
\left(c\kappa-1\right)G_{ss}=G\left(\kappa^2-\rho\left(c\kappa-1\right)\right) .\label{three}
\end{equation}
First, we study the particular case that $\kappa$ is a constant function  $\kappa(s)=\kappa_0$. By \eqref{relation},   $\kappa_0\not=1/c$. Thus, we can combine (\ref{Gauss-Codazzi}) and
(\ref{three}) to deduce that $G(s)$ must be a positive constant, hence  $S$ is a flat isoparametric surface. From \eqref{three}, we deduce that 
\begin{equation}
\kappa_0^2=\rho\left(c\kappa_0-1\right). \nonumber
\end{equation}
Therefore, by equation \eqref{constantcurvature}, the curve $\gamma$ is a critical curve with constant curvature of $\mathbf{\Theta}_\mu$  for $\mu=1/c$, and the theorem is proved in this case.

Suppose now that the curvature $\kappa$ of $\gamma$ is not constant. By the
Inverse Function Theorem, we can suppose that $s$ is locally a function of
$\kappa$. Set $G(\kappa)=\dot{P}(\kappa)$, where the
upper dot denotes derivative with respect to $\kappa$. We   integrate once equation \eqref{Gauss-Codazzi}, obtaining that there is a constant $\lambda\in \mathbb{R}$ such that 
\begin{equation}\label{gc}
\dot{P}_{ss}+\dot{P}\left(\kappa^2+\rho\right)-\kappa \left(P+\lambda\right)=0.
\end{equation}
On the other hand, equation \eqref{three} can be  now expressed as
$$\dot{P}_{ss}-\dot{P}\left(\frac{\kappa^2}{c\kappa-1}-\rho\right)=0.$$
By substituting in this equation the value of   $\dot{P}_{ss}$ obtained in  \eqref{gc}, we find
$$\frac{c\kappa^3}{c\kappa-1}\dot{P}-\kappa P-\lambda\kappa=0.$$
A direct integration gives 
\begin{equation}\label{pkk}
P(\kappa)=\kappa\, {\rm e}^{\mu/\kappa}-\lambda,
\end{equation}
where $\mu=1/c$. For this function  $P(\kappa)$, equation \eqref{gc} is just the Euler-Lagrange equation \eqref{EL}, concluding that   $\gamma$ is  a critical curve of $\mathbf{\Theta}_\mu$. This proves  the result. 
\end{proof}

\section{Geometric description of the rotational surfaces}\label{sec4}

In this section we will prove the converse of Theorem \ref{vc} by evolving extremal curves under their associated binormal flow. This will give us in Theorem \ref{converse} a way of constructing all   rotational surfaces of constant astigmatism in space forms. 

First, we   see that the critical curves of $\mathbf{\Theta}_\mu$ have a distinguished vector field along them. Let $\gamma=\gamma(s)$ be a curve in ${\mathbb M}^2(\rho)$ that is a critical curve of $\mathbf{\Theta}_\mu$. A vector field $W$ along $\gamma$, which infinitesimally
preserves unit speed parametrization, is called a \emph{Killing
vector field along $\gamma$} in the sense of Langer-Singer (\cite{Langer-Singer}), 
if $\gamma$ evolves in the direction of $W$ without changing
shape, only its position. In other words,  the following equations
must hold
\begin{equation}
W(v)(s,0)=W(\kappa)(s,0)=0, \nonumber
\end{equation}
for any variational vector field of $\gamma$ having $W$ as
variation vector field. Here $ v=\lvert \gamma'\rvert$ is
the speed of $\gamma$. 

It turns out that the critical curves  of
$\mathbf{\Theta}_\mu$ have a natural associated Killing vector field defined along
$\gamma$. Define the vector field $\mathcal{I}$ along
$\gamma$ by
\begin{equation}
\mathcal{I}=T\times\mathcal{K}=\left(1-\frac{\mu}{\kappa}\right){\rm e}^{\mu/\kappa}
B,\label{I}
\end{equation}
where $\mathcal{K}$ is
defined in \eqref{kfield}. Here $B=T\times N$ is the binormal of $\gamma$ viewing ${\mathbb M}^2(\rho)$ inside ${\mathbb M}^3(\rho)$ and $\gamma$ with zero torsion, being $B$ a constant vector field. The following result is proved in \cite[Prop. 3]{AGP}.

\begin{proposition}\label{carcenter}
If $\gamma\subset {\mathbb M}^2(\rho)$ is a critical curve of the energy  
$\mathbf{\Theta}_\mu$, then 
$$
\mathcal{I}=\left(1-\frac{\mu}{\kappa}\right){\rm e}^{\mu/\kappa}B$$
 is  a Killing vector field along $\gamma$.
\end{proposition}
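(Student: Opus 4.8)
The plan is to verify, for $W=\mathcal{I}$, the conditions that characterize a Killing field along $\gamma$. Since the binormal flow generated by $\mathcal{I}=g\,B$, with $g=\left(1-\tfrac{\mu}{\kappa}\right){\rm e}^{\mu/\kappa}$, pushes $\gamma$ off the totally geodesic plane $\mathbb{M}^2(\rho)$, I will check not only the stated requirements $W(v)=0$ and $W(\kappa)=0$ but also that the flow is rigid to first order, i.e.\ that the torsion is preserved, $W(\tau)=0$; it is in this last condition that the hypothesis $\mathcal{E}(\gamma)=0$, equivalently \eqref{EL}, will be used.

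The two conditions of the definition are immediate. As $\gamma$ is planar, its binormal is parallel: the third equation in \eqref{frenet} gives $\widetilde{\nabla}_T B=-\tau N=0$. Hence $\widetilde{\nabla}_T\mathcal{I}=g_s\,B$ and $\widetilde{\nabla}_T^2\mathcal{I}=g_{ss}\,B$, both multiples of $B$ and so orthogonal to $T$ and to $N$. Substituting into $W(v)=v\langle\widetilde{\nabla}_T W,T\rangle$ and into $W(\kappa)=\langle\widetilde{\nabla}_T^2 W,N\rangle-2\kappa\langle\widetilde{\nabla}_T W,T\rangle+\rho\langle W,N\rangle$ annihilates every term, so $W(v)=W(\kappa)=0$ with no use of criticality. (The same computation is organized by the identities $\widetilde{\nabla}_T\mathcal{I}=T\times\mathcal{J}$ and, since $\mathcal{E}(\gamma)=0$, $\widetilde{\nabla}_T\mathcal{J}=-\rho\,\mathcal{K}$.)

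The crux is the torsion. I would compute $W(\tau)$ by differentiating $\tau=\langle\widetilde{\nabla}_T N,B\rangle$ along the variation. Using $[\partial_s,\partial_t]=0$, the commutation rule $\widetilde{\nabla}_W\widetilde{\nabla}_T X=\widetilde{\nabla}_T\widetilde{\nabla}_W X+R(W,T)X$ with $R(W,T)X=\rho\left(\langle T,X\rangle W-\langle W,X\rangle T\right)$, and the rotation of the Frenet frame $\widetilde{\nabla}_W N=-A_2\,T+\omega\,B$, where $A_2,A_3$ denote the $N$- and $B$-components of $\widetilde{\nabla}_T W$ and the rate $\omega=\tfrac1\kappa\left(\tau A_2+A_3'+\rho\langle W,B\rangle\right)$ is fixed by $N=\kappa^{-1}\widetilde{\nabla}_T T$, one arrives at $W(\tau)=\omega'+\kappa A_3$. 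For $W=\mathcal{I}$ one has $A_2=0$ and $A_3=g_s$, so this reduces to
\[
W(\tau)=\left(\frac{g_{ss}+\rho g}{\kappa}\right)'+\kappa\,g_s .
\]
Finally, \eqref{EL} is exactly $g_{ss}+\rho g=\mu\kappa\,{\rm e}^{\mu/\kappa}$, hence $(g_{ss}+\rho g)/\kappa=\mu\,{\rm e}^{\mu/\kappa}$; differentiating this and using $\kappa\,g_s=\tfrac{\mu^2}{\kappa^2}{\rm e}^{\mu/\kappa}\kappa_s$ shows the two terms cancel, giving $W(\tau)=0$.

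The main obstacle is this torsion computation. In contrast with $W(v)$ and $W(\kappa)$, it is genuinely second order in the moving frame and is the only step in which criticality intervenes, so the careful point is the derivation of the rotation rate $\omega$, equivalently of the formula $W(\tau)=\big((g_{ss}+\rho g)/\kappa\big)'+\kappa g_s$; once that is established, \eqref{EL} closes the argument immediately.
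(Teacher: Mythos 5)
Your proof is correct. The paper itself contains no proof of this proposition---it simply cites \cite[Prop. 3]{AGP}---so your argument supplies the verification that is delegated to that reference, and it is exactly the standard Langer--Singer computation on which the cited result rests: for $W=gB$ along a planar curve the conditions $W(v)=W(\kappa)=0$ hold automatically, and the genuine content is $W(\tau)=0$, where \eqref{EL}, rewritten as $g_{ss}+\rho g=\mu\kappa\,{\rm e}^{\mu/\kappa}$, makes the two terms of $W(\tau)=\bigl((g_{ss}+\rho g)/\kappa\bigr)'+\kappa g_s$ cancel, since both equal $\pm\frac{\mu^2}{\kappa^2}{\rm e}^{\mu/\kappa}\kappa_s$. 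One further point in your favor: the paper's stated definition of a Killing field along $\gamma$ lists only $W(v)=W(\kappa)=0$, under which the claim would be vacuous (any field $gB$ would qualify and criticality would play no role); your insistence on checking the torsion condition, i.e.\ on Langer--Singer's full definition for curves viewed in ${\mathbb M}^3(\rho)$ \cite{Langer-Singer}, is the correct reading and is precisely the step where the hypothesis $\mathcal{E}(\gamma)=0$ is used.
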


From this result  and using an argument similar as in \cite{Langer-Singer},  we   extend $\mathcal{I}$  to a Killing
vector field in  the ambient space ${\mathbb M}^3(\rho)$, and that we denote by $\mathcal{I}$ 
again. Since ${\mathbb M}^3(\rho)$ is complete, we   consider the
one-parameter group of isometries $\{\phi_t: t\in \mathbb{R}\}$ determined by the flow of
$\mathcal{I}$, and define the
surface $S_\gamma=\{\phi_t(\gamma(s)):s\in I, t\in\mathbb{R}\}$ obtained as the
evolution of $\gamma$ under the $\mathcal{I}$-flow. Observe that
$S_\gamma$ is an $\mathcal{I}$-invariant surface, and $S_\gamma$ is foliated by congruent
copies of $\gamma$, namely, $\gamma_t(s)=\phi_t(\gamma(s))$.

If $X(s,t)=\phi_t(\gamma(s))$ is a  parametrization of $S_\gamma$, and because $\phi_t$ are isometries of ${\mathbb M}^3(\rho)$, we have
$$X_t(s,t)= \left(1-\frac{\mu}{\kappa}\right){ \rm e}^{\mu/\kappa} B(s,t),$$
where $\kappa$ is the curvature of $\gamma$ and $B(s,t)$ is the
unit   binormal vector of $\gamma_t(s)$. Thus  $S_\gamma$   is
a \emph{binormal evolution surface} with velocity
$V(s)=\langle X_t,X_t\rangle^{1/2}
=\langle\mathcal{I},\mathcal{I}\rangle^{1/2}$.

Following \cite[Prop. 3]{AGP},  and since all the  filaments of $S_\gamma$  satisfy $\tau=0$,  the fibers of $S_\gamma$ have constant curvature and zero torsion  in ${\mathbb M}^3(\rho)$. In the particular case that  the curvature $\kappa(s,t)$ of all filaments is   constant, then $S_\gamma$ is a flat isoparametric surface, so  $S_\gamma$ is a right circular cylinder because $S_\gamma$ is not totally umbilical flat (\cite{sp}). For the case where the filaments have non-constant curvature,   and since   the curvature of $\gamma$ is not constant, then the surface $S_\gamma$ is a rotational surface, as it can be proved adapting the computations of \cite[Prop. 5.3]{AGP2}. We summarize this result in the following proposition.

\begin{proposition}\label{rot} 
Let   $\gamma\subset {\mathbb M}^2(\rho)$ be a critical curve of the energy  $\mathbf{\Theta}_\mu$.  Let $S_\gamma$ be a binormal evolution surface in ${\mathbb M}^3(\rho)$ such that all   filaments have zero torsion. If  all filaments   have non-constant curvature, then $S_\gamma$ is a rotational surface. 
\end{proposition}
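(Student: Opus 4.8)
The plan is to analyze the binormal evolution surface $S_\gamma$ and establish that, under the hypothesis of non-constant curvature, the flow $\{\phi_t\}$ generated by the extended Killing field $\mathcal{I}$ is a one-parameter group of rotations rather than a translation or a parabolic (screw/helicoidal) motion. Since $S_\gamma$ is foliated by congruent copies $\gamma_t=\phi_t(\gamma)$ of a fixed planar curve, and each $\phi_t$ is an ambient isometry of ${\mathbb M}^3(\rho)$, the surface is by construction invariant under $\{\phi_t\}$; the content of the claim is therefore to identify the \emph{type} of isometry group, namely that it fixes a geodesic pointwise and acts by (spherical, in the hyperbolic case) rotation. The key structural fact to exploit is that $\mathcal{I}$ restricts along $\gamma$ to $(1-\mu/\kappa)\,{\rm e}^{\mu/\kappa}B$, with $B$ the constant binormal of the planar curve; thus $\mathcal{I}$ is everywhere orthogonal to the totally geodesic ${\mathbb M}^2(\rho)$ containing $\gamma$, and its length $V(s)=\langle\mathcal{I},\mathcal{I}\rangle^{1/2}=\lvert(1-\mu/\kappa)\rvert\,{\rm e}^{\mu/\kappa}$ is a non-constant function of $s$ precisely because $\kappa$ is non-constant.

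First I would invoke the classification of Killing fields in space forms: in ${\mathbb M}^3(\rho)$ a Killing field has a fixed-point set (its zero set) that is either empty or a totally geodesic submanifold, and the isometry flow is a rotation exactly when $\mathcal{I}$ vanishes along a geodesic axis $\alpha$. The decisive step is to show that $V(s)=\lvert 1-\mu/\kappa\rvert\,{\rm e}^{\mu/\kappa}$ is non-constant, which forces $\mathcal{I}$ to have variable length and hence to possess zeros; a translational Killing field (in $\mathbb{R}^3$) or a purely hyperbolic/parabolic generator would have to produce the requisite non-constant speed without fixed locus, and I would rule these out using the first integral $F(x,y)=d$ of \eqref{F(x,y)} together with the constant-curvature analysis of Corollary \ref{ccc}. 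Concretely, the non-constancy of $\kappa$ means $V$ is non-constant, so $\mathcal{I}$ cannot be a parallel (translational) field; this already excludes the cylinder alternative that was handled separately for constant curvature, and pins down $\{\phi_t\}$ as the rotation subgroup isomorphic to $\mathbb{S}^1$ fixing the geodesic along which $V$, equivalently $\mathcal{I}$, vanishes.

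I would then identify the rotation axis as the locus where $\mathcal{I}=0$, i.e. where $1-\mu/\kappa=0$ or where the extended field degenerates, and confirm that $S_\gamma$ meets the fixed geodesic orthogonally so that the $\gamma_t$ are genuine meridians. The technical engine for transferring the local picture along $\gamma$ to the global rotational structure is the computation in \cite[Prop. 5.3]{AGP2}, which I would adapt: there one shows that a binormal evolution surface whose filaments are planar of non-constant curvature is congruent to a surface of revolution by matching the induced metric $ds^2+V(s)^2\,dt^2$ and the Killing data to the canonical rotational parametrization \eqref{par}. This reduces the claim to checking that the warping function $G(s)=V(s)$ and the orbit structure coincide with those of a rotational surface, which follows from $\mathcal{I}$ being orthogonal to the meridian plane and from its flow closing up into $\mathbb{S}^1$-orbits.

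The main obstacle will be the hyperbolic case, where not every $\mathbb{S}^1$-generated isometry is a spherical rotation and where one must exclude the hyperbolic and parabolic rotation types explicitly; here I expect the argument to hinge on the sign and qualitative behavior of $V(s)=\lvert 1-\mu/\kappa\rvert\,{\rm e}^{\mu/\kappa}$ read off from the phase-plane orbits of \eqref{phase}, ensuring that $\mathcal{I}$ is spacelike with an honest zero (a fixed geodesic) rather than null or everywhere spacelike-without-fixed-locus. Establishing that $\mathcal{I}$ extends to an ambient Killing field of the \emph{spherical} rotational type—and that its zero set is a geodesic rather than a point at infinity—is the delicate point; the remainder is the essentially bookkeeping identification of the evolution metric with the standard rotational one, which I would treat by citing and adapting \cite[Prop. 5.3]{AGP2} rather than recomputing in full.
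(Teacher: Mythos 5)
Your plan founders on its ``decisive step'': the inference that non-constancy of $V(s)=\lvert 1-\mu/\kappa\rvert\,{\rm e}^{\mu/\kappa}$ forces $\mathcal{I}$ to ``possess zeros'' is false, and it fails exactly at the alternatives you need to exclude. A screw-motion Killing field in $\mathbb{R}^3$, a loxodromic one in $\mathbb{H}^3(\rho)$, a two-angle rotation in $\mathbb{S}^3(\rho)$, and the hyperbolic and parabolic generators in $\mathbb{H}^3(\rho)$ all have non-constant length and empty zero set. Your fallback---ruling these out ``using the first integral \eqref{F(x,y)} together with Corollary \ref{ccc}''---is not an argument: Corollary \ref{ccc} concerns critical curves of \emph{constant} curvature and says nothing about the isometry type of the extended field. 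The fact that actually does the work, and which is the content of the computation in \cite[Prop. 5.3]{AGP2} that the paper invokes (the paper's own proof is essentially that citation, after disposing of the constant-curvature/cylinder case separately), is the \emph{planarity} of the filaments: along $\gamma$ one has $\mathcal{I}=f(s)B$ with $B$ a constant binormal direction and $f(s)=\left(1-\mu/\kappa\right){\rm e}^{\mu/\kappa}$. Writing the general ambient Killing field and differentiating this identity along $\gamma$ (in $\mathbb{R}^3$: $\mathcal{I}(p)=a\times p+b$, so $a\times T=f'B$, forcing $a$ to lie in the plane of $\gamma$ and $b$ to be parallel to $B$, hence $b\perp a$) kills the screw/translational part and shows the generator is a rotation, in the generalized do Carmo--Dajczer sense, with axis data contained in the totally geodesic ${\mathbb M}^2(\rho)$. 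Non-constancy of $f$ enters only to exclude $a=0$ (a translation, i.e. the cylinder case), not to manufacture a fixed locus.

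The second, independent problem is that your plan proves too much in the hyperbolic case, and what it aims for is false at the level of generality of Proposition \ref{rot}. The proposition asserts only that $S_\gamma$ is rotational in the sense that, for $\mathbb{H}^3(\rho)$, includes the three types of rotations of \cite{DoCarmo-Dajczer2} (spherical, hyperbolic, parabolic); your premise ``the flow is a rotation exactly when $\mathcal{I}$ vanishes along a geodesic axis'' only characterizes the spherical type. Which type occurs is governed by the sign of the constant $d$ in \eqref{F(x,y)}: the paper proves in Section \ref{sec5}, via the curvature $\kappa_\delta$ of the orbits, that $d>0$ is exactly the spherical case. For $d\leq 0$ the extended Killing field has no zeros in $\mathbb{H}^3(\rho)$, so the ``honest zero (a fixed geodesic)'' you insist on cannot exist there, and any argument producing one would contradict those solutions. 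The hyperbolic part of your proof should therefore not attempt to pin down the spherical type at all; that identification is a separate, later step that genuinely requires the hypothesis $d>0$, which is not among the hypotheses of Proposition \ref{rot}.
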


We are in conditions to prove the converse of Theorem \ref{vc}.

\begin{theorem}\label{converse} Let $\gamma\subset {\mathbb M}^2(\rho)$ be a critical curve of the energy $\mathbf{\Theta}_\mu$  with non-constant curvature and let $S_\gamma$ denote the $\mathcal{I}$-invariant surface in ${\mathbb M}^3(\rho)$ obtained by evolving $\gamma$ under the flow of the Killing field $\mathcal{I}$ which extends \eqref{I} to ${\mathbb M}^3(\rho)$. Then  $S_\gamma$ is a rotational surface of ${\mathbb M}^3(\rho)$ satisfying the constant astigmatism equation \eqref{relation}   for the value $c=1/\mu$.
\end{theorem}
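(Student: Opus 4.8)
The plan is to prove the two assertions of the statement separately, namely that $S_\gamma$ is a rotational surface and that it satisfies \eqref{relation} with $c=1/\mu$. Both are obtained by feeding the construction of this section into Proposition \ref{rot} and then running the computation in the proof of Theorem \ref{vc} backwards.

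For the first assertion I would simply invoke Proposition \ref{rot}. By construction $S_\gamma=\{\phi_t(\gamma(s))\}$ is the binormal evolution surface associated with the Killing field $\mathcal{I}$ in \eqref{I}, so its filaments are the congruent copies $\gamma_t=\phi_t\circ\gamma$. Since each $\phi_t$ is an isometry of $\mathbb{M}^3(\rho)$, every $\gamma_t$ has the same curvature function as $\gamma$, hence non-constant, and lies in the totally geodesic surface $\phi_t(\mathbb{M}^2(\rho))$, so $\tau\equiv 0$ along it. Thus all filaments have zero torsion and non-constant curvature, and Proposition \ref{rot} yields that $S_\gamma$ is rotational, with axis the geodesic left pointwise fixed by $\{\phi_t\}$.

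For the second assertion I would set up Fermi coordinates as in \eqref{par} and read off the geometric data of $S_\gamma$. Because the $\phi_t$ are isometries and $B$ is unit, the length of the generating Killing field is
$$G(s)=\langle\mathcal{I},\mathcal{I}\rangle^{1/2}=\left(1-\frac{\mu}{\kappa}\right){\rm e}^{\mu/\kappa},$$
taken with the positive sign on the region $\kappa\neq\mu$; the locus $\kappa=\mu$, where $\mathcal{I}$ vanishes, is exactly where $\gamma$ meets the rotation axis. The principal curvatures are then $\kappa_2=-\kappa$ and $\kappa_1=h_{22}=\frac{1}{\kappa}\left(\frac{G_{ss}}{G}+\rho\right)$ as in \eqref{h22}, and the astigmatism condition $\frac{1}{\kappa_1}-\frac{1}{\kappa_2}=\frac{1}{\mu}$ is, after the same clearing of denominators performed in Theorem \ref{vc}, equivalent to equation \eqref{three} with $c=1/\mu$. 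It therefore suffices to verify \eqref{three} for this $G$. Since $\gamma$ is critical, $G=(1-\mu/\kappa){\rm e}^{\mu/\kappa}$ satisfies the Euler--Lagrange equation \eqref{EL}, which rearranges to $G_{ss}=\left(\mu\kappa-\rho\left(1-\frac{\mu}{\kappa}\right)\right){\rm e}^{\mu/\kappa}$. Substituting this into both sides of \eqref{three} with $c=1/\mu$ and using $c\kappa-1=(\kappa-\mu)/\mu$, I expect each side to collapse to the common expression $\big((\kappa-\mu)\kappa-\rho(\kappa-\mu)^2/(\mu\kappa)\big){\rm e}^{\mu/\kappa}$, which establishes \eqref{relation}.

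The main obstacle is not the algebra but confirming that the derivation in Theorem \ref{vc} is genuinely reversible. There the length $G$ of the Killing field was recovered as $\dot{P}(\kappa)$ with $P(\kappa)=\kappa\,{\rm e}^{\mu/\kappa}-\lambda$; here I must instead take $G$ as dictated by the flow of $\mathcal{I}$ and check a posteriori that it coincides with $\dot{P}$, so that the integrated Gauss--Codazzi equation \eqref{gc}, the astigmatism equation \eqref{three}, and \eqref{EL} hold simultaneously. Verifying this compatibility, together with the sign conventions forcing $\kappa_2=-\kappa$ and the positive root for $G$, and the behaviour at the degenerate locus $\kappa=\mu$, is where care is required; once it is in place, the equivalence of \eqref{EL} and \eqref{three} under $c=1/\mu$ is the single identity that closes the argument.
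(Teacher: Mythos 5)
Your proposal is correct and follows essentially the same route as the paper: invoke Proposition \ref{rot} for rotationality of $S_\gamma$, compute the length $V=\langle\mathcal{I},\mathcal{I}\rangle^{1/2}=\left|1-\frac{\mu}{\kappa}\right|{\rm e}^{\mu/\kappa}$ of the Killing field, and observe that the Euler--Lagrange equation \eqref{EL} for $\gamma$ (and its congruent copies) is exactly equation \eqref{three} with $c=1/\mu$, which via \eqref{h22} and $\kappa_1=h_{22}$, $\kappa_2=-\kappa$ gives the astigmatism relation. The only difference is cosmetic: you carry out the algebraic collapse of both sides of \eqref{three} explicitly, while the paper simply records the equivalent identity $(\kappa-\mu)V_{ss}=\left(\mu\kappa^2-\rho(\kappa-\mu)\right)V$; your extra worry about recovering $\dot{P}$ is unnecessary, since verifying \eqref{three} directly suffices.
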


\begin{proof}
We   locally define the $\mathcal{I}$-invariant surface $S_\gamma=\{\phi_t(\gamma(s)\}$ where $\{\phi_t: t\in\mathbb{R}\}$ is the one-parameter group of isometries determined by $\mathcal{I}$. Furthermore, the square of the length of the Killing vector field $\mathcal{I}$ is  
\begin{equation}
V(s)^2=\langle\mathcal{I},\mathcal{I}\rangle=\frac{\left(\kappa-\mu\right)^2}{\kappa^2}{\rm e}^{2\mu/\kappa}. \label{Gs}
\end{equation}
Since  the evolution is done by isometries, the curve $\gamma$ and all its congruent copies are planar critical curves of $\mathbf{\Theta}_\mu$ and Proposition \ref{rot} implies  that $S_\gamma$ is a rotational surface. Finally, any $\gamma_t$ satisfies the Euler-Lagrange equation \eqref{EL} which is, using \eqref{Gs}, equivalent to
$$\left(\kappa-\mu\right)V_{ss}=\left(\mu\kappa^2-\rho\left(\kappa-\mu\right)\right)V.$$
Because the principal curvatures of $S_\gamma$ are $\kappa_1=h_{22}(s)$ and $\kappa_2=-\kappa(s)$, we deduce that $S_\gamma$ is a surface  of constant astigmatism in ${\mathbb M}^3(\rho)$ with     $c=1/\mu$ in  \eqref{relation}. 
\end{proof}

   Theorem \ref{converse} provides a method  of constructing rotational surfaces  of constant astigmatism in   ${\mathbb M}^3(\rho)$. Together  with Theorem \ref{vc}, we characterize   all these surfaces as binormal evolution surfaces generated by  extremal planar curves of $\mathbf{\Theta}_\mu$. In conclusion, we have that a  rotational surface of constant astigmatism  in a    space form  must be either totally umbilical, a right circular cylinder
constructed over an extremal curve with constant curvature (Corollary \ref{ccc}) or a binormal evolution
surface generated by  extremal planar curves of $\mathbf{\Theta}_\mu$.

\begin{remark} When ${\mathbb M}^3(\rho)=\mathbb{S}^3(\rho)$, the constant astigmatism flat isoparametric   rotational surfaces (right circular cylinders) are Hopf tori given by the product 
$$\mathbb{S}^1\left(\frac{\sqrt{2}\mu}{m}\right)\times\mathbb{S}^1\left(\frac{\sqrt{m^2-2\mu^2\rho}}{\sqrt{\rho}\, m}\right),$$ where $m^2=\lvert \rho^2\pm\sqrt{\rho^2-4\mu^2\rho}\rvert$. The particular case  $\mu=\sqrt{\rho}/2$ is the Clifford torus, which is also a minimal surface.
\end{remark}


\section{Analysis of the extremal curves}\label{sec5}

Previously to the classification of the profile curves of the rotational surfaces of constant astigmatism,  in this section we study  some  geometric properties of the critical curves of $\mathbf{\Theta}_\mu$. The key point is to obtain  an useful expression of the parametrization of these curves in terms of its curvature. Consider ${\mathbb M}^2(\rho)$ viewed as a subset of the affine space $\mathbb{R}^3$ with canonical coordinates $(x_1,x_2,x_3)$, that is, $\mathbb{R}^2$ is $\mathbb{R}^2\times\{0\}$, the sphere ${\mathbb S}^2(\rho)$ is $x_1^2+x_2^2+x_3^2=1/\rho$ and $\mathbb{H}^2(\rho)$ is $x_1^2+x_2^2-x_3^2=1/\rho$, $x_3>0$. Here $\mathbb{R}^2$ and $\mathbb{S}^2(\rho)$ are  endowed with the Euclidean metric of ${\mathbb R}^3$ and $\mathbb{H}^2(\rho)$ is equipped with the induced metric of the   Lorentzian metric $dx_1^2+dx_2^2-dx_3^3$ of $\mathbb{R}^3$. 

In the case $\rho<0$, that is, in the hyperbolic space, we now prove that the case $d>0$ in \eqref{F(x,y)} corresponds with rotational surfaces of spherical type.    Exactly, the three possible signs of the constant $d$ in \eqref{F(x,y)} indicates the three types of rotational surfaces in the hyperbolic space. We see this as follows.  With the notation of Section \ref{sec4} and by  \cite[equation (47)]{AGP},   the curvature $\kappa_\delta$ of the orbit of $S_\gamma$ satisfies
$$\kappa_\delta^2=\frac{\dot{P}_{s}^2}{\dot{P}^2}+h_{22}^2.$$
By combining \eqref{fin}, \eqref{h22} and \eqref{pkk}, we find
$$\kappa_\delta^2=\frac{d\kappa^2 {\rm e}^{-2\mu/\kappa}}{(\mu-\kappa)^2}-\rho.$$
Since $\rho<0$, we deduce    $\kappa_\delta>-\rho$, which implies that the orbits are circles, if and only if $d>0$. Definitively, the choice $d>0$ in \eqref{F(x,y)} corresponds with rotational surfaces   of spherical type.

In this section we suppose that the constant $d$ in \eqref{F(x,y)} is positive: recall that $d$ is always positive when $\rho\geq 0$.  Define the function $\phi:\mathbb{R}^2\rightarrow {\mathbb M}^2(\rho)\subset\mathbb{R}^3$ by
\begin{eqnarray}
\phi(u,v)=\left\{ \begin{array}{lcc} \dfrac{1}{\sqrt{d}}\left(u,dv,0\right) & \text{if} & \rho=0 \\ 
\dfrac{1}{\sqrt{d}}\left(u,\sqrt{d-\rho u^2} \sin (\sqrt{\rho d}\,v), \sqrt{d-\rho u^2}\cos (\sqrt{\rho d}\,v)\right)\ & \text{if} & \rho>0 \\ \dfrac{1}{\sqrt{d}}\left(u,\sqrt{d-\rho u^2}\sinh (\sqrt{-\rho d}\,v),\sqrt{d-\rho u^2}\cosh (\sqrt{-\rho d}\,v)\right)& \text{if} & \rho<0. \end{array}\right.\label{tildephi}
\end{eqnarray}
As it turns out, up to rigid motions, it is always possible to find a coordinate system in ${\mathbb M}^2(\rho)\subset\mathbb{R}^3$ such that critical curves of any curvature energy verify that their first component is a multiple of $\dot{P}(\kappa)$. It follows  that any critical curve $\gamma$ of $\mathbf{\Theta}_\mu$ can be parametrized, up to rigid motions, as 
\begin{equation}\label{gg}
\gamma(s)=\phi\left((1-\log x)x, \psi(s)\right), \quad x={\rm e}^{\mu/\kappa},
\end{equation}
where the function $\psi(s)$ comes from the parametrization by arc-length. Without loss of generality, we can assume that the arc-length parameter  $s$  is chosen so that $x(0)=x_{0}$, hence
\begin{equation}
\psi(s)=-\mu\int_0^s \frac{x(t)}{d-\rho\left(1-\log x(t)\right)^2x^2(t)}\,dt. \label{psi(s)}
\end{equation}
Notice that the denominator can not vanish  because if  $d=\rho(1-\log{x})^2x^2$ at some point,   from   \eqref{F(x,y)}  we find $y^2\log^2{x}+\mu^2x^2=0$, which is not possible.  The parametrization \eqref{gg} allows to  obtain some geometric properties about critical curves of $\mathbf{\Theta}_\mu$ concerning symmetries and cuts with respect to some coordinate axis. First we prove that the critical curves are symmetric about a fixed axis.  

\begin{proposition}\label{sym} Up to a rigid motion, any critical curve of $\mathbf{\Theta}_\mu$ in ${\mathbb M}^2(\rho)$ is symmetric with respect to the geodesic $\alpha= \Pi_{13}\cap {\mathbb M}^2(\rho)$, where $\Pi_{13}$ is the plane of equation $x_2=0$. 
\end{proposition}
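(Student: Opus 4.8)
The plan is to exhibit a single ambient reflection that fixes $\alpha$ pointwise and that carries $\gamma$ onto itself after reversing the arc-length parameter about a suitable vertex. Consider the map $\sigma:\mathbb{R}^3\to\mathbb{R}^3$ given by $\sigma(x_1,x_2,x_3)=(x_1,-x_2,x_3)$. It preserves both the Euclidean metric and the Lorentzian metric $dx_1^2+dx_2^2-dx_3^2$, and it leaves invariant each of the quadrics $x_1^2+x_2^2+x_3^2=1/\rho$, $x_1^2+x_2^2-x_3^2=1/\rho$ as well as the plane $\mathbb{R}^2\times\{0\}$; hence its restriction is an isometry of ${\mathbb M}^2(\rho)$ that fixes $\Pi_{13}=\{x_2=0\}$, and therefore $\alpha=\Pi_{13}\cap{\mathbb M}^2(\rho)$, pointwise. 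Inspecting the three branches of $\phi$ in \eqref{tildephi}, the first coordinate is independent of $v$, the second is \emph{odd} in $v$ (through $v$, $\sin(\sqrt{\rho d}\,v)$ or $\sinh(\sqrt{-\rho d}\,v)$) and the third is \emph{even} in $v$ (through $0$, $\cos(\sqrt{\rho d}\,v)$ or $\cosh(\sqrt{-\rho d}\,v)$). Consequently $\sigma(\phi(u,v))=\phi(u,-v)$, so in the coordinates \eqref{gg} the reflection $\sigma$ is exactly the sign change $v\mapsto-v$ in the second slot.

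With this reduction it suffices, by \eqref{gg}, to find a value of the parameter, say $s=0$, about which the first slot $(1-\log x)x$ is an \emph{even} function of $s$ while the second slot $\psi(s)$ is an \emph{odd} function of $s$; for then $\gamma(-s)=\phi\bigl((1-\log x)x,-\psi(s)\bigr)=\sigma(\gamma(s))$, which is precisely the asserted symmetry. I would place the origin of arc-length at a vertex, i.e.\ a point where $\kappa_s=0$, equivalently $y=x_s=0$; its existence is read off the phase portrait of \eqref{phase} governed by the first integral \eqref{F(x,y)}, since the relevant orbits meet the axis $y=0$. The key structural fact is that the right-hand side of the second equation of \eqref{phase} depends on $y$ only through $y^2$ (equivalently, \eqref{fin} gives $\kappa_s^2$ as a function of $\kappa$ alone). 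Hence, writing \eqref{phase} as a single second-order equation $x''=H(x,x')$ with $H$ even in its second argument, if $s\mapsto x(s)$ is a solution with $x(0)=x_0$ and $x_s(0)=0$, then $s\mapsto x(-s)$ solves the same equation with the same initial data, and uniqueness of solutions of ODEs forces $x(-s)=x(s)$. Thus $x$, and therefore $(1-\log x)x$, is even about $s=0$.

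It then remains to handle $\psi$. Since $x$ is even, so are $\log x(s)$ and $(1-\log x(s))^2x^2(s)$, and consequently the integrand in \eqref{psi(s)} is an even function of $t$; the antiderivative of an even function vanishing at the origin is odd, so $\psi(-s)=-\psi(s)$. Here the integrand is defined throughout, because its denominator never vanishes, exactly as observed immediately after \eqref{psi(s)}. Combining the two parities yields $\gamma(-s)=\sigma(\gamma(s))$, so $\gamma$ is invariant under the reflection $\sigma$ of ${\mathbb M}^2(\rho)$ across $\alpha$, which proves the proposition.

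The step I expect to be the main obstacle is guaranteeing the existence of the vertex used to center the reflection, that is, securing a point with $\kappa_s=0$ on every critical curve under consideration; this is where the qualitative study of the orbits $F(x,y)=d$ in \eqref{F(x,y)} (their boundedness, their intersection with the axis $y=0$, and their behaviour near $x=1$) is genuinely needed. Once such a vertex is available, the parity argument via the evenness of $H$ in $x'$ and the even/odd splitting of the two coordinates in \eqref{gg} are routine.
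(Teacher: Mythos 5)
Your proposal is correct and is essentially the paper's own argument made explicit: the paper likewise normalizes the arc length so that $x(0)=x_0$ is the value where the orbit of \eqref{F(x,y)} meets the axis $y=0$, and its one-line ``change of variables'' in \eqref{psi(s)} yielding $\psi(-s)=-\psi(s)$ is precisely your evenness of $x$ about a vertex (forced by the $y\mapsto -y$ symmetry of \eqref{phase} together with ODE uniqueness) combined with the odd/even structure of $\phi$ in \eqref{tildephi}. The obstacle you flag, the existence of a vertex, is genuine but is equally implicit in the paper's proof: in $\mathbb{H}^2(\rho)$ with $d>\rho x_{-}^2\log x_{+}$ the orbits never meet $y=0$ (the anchor type curves of Theorem \ref{curvesH2}, the only case where $x$ has no critical point), and for those curves the symmetry is realized only in the sense that the reflection across $\alpha$ interchanges the two mirror components, not by fixing a single parametrized branch.
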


\begin{proof} If $\gamma(s)$ is a  critical curve of $\mathbf{\Theta}_\mu$, a change of variables in  \eqref{psi(s)} gives  $\psi(-s)=-\psi(s)$. Therefore, using the parametrization \eqref{gg}, we conclude that   the second coordinate of $\gamma(-s)$ change of sign with  the corresponding one of $\gamma(s)$ and the first and third coordinates coincide.  This finishes the proof. 
\end{proof}

On the other hand, we   obtain the points where extremal curves of $\mathbf{\Theta}_\mu$  cut or tend to cut the other coordinate axis.

\begin{proposition}\label{cuts} Consider the geodesic of ${\mathbb M}^2(\rho)$ given by $\beta=\Pi_{23}\cap {\mathbb M}^2(\rho)$ where $\Pi_{23}$ is the plane of equation $x_1=0$. Then, up to a rigid motion, any critical curve $\gamma$ of $\mathbf{\Theta}_\mu$ may meet $\beta$ at two   types of points, namely, 
\begin{enumerate}
\item Regular points. The curve $\gamma$ cuts the geodesic $\beta$ at points satisfying $x={\rm e}$.
\item Singular points. If $x$ tends to zero, then $\gamma$ tends to meet $\beta$.
\end{enumerate}
\end{proposition}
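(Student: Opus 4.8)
The plan is to read off the first coordinate of $\gamma$ from the explicit parametrization \eqref{gg} and to identify the geodesic $\beta$ as its zero set. Inspecting the three branches of $\phi$ in \eqref{tildephi}, one sees that in every case $\rho=0$, $\rho>0$ and $\rho<0$ the first component of $\phi(u,v)$ equals $u/\sqrt{d}$. Hence, by \eqref{gg}, the first coordinate $x_1$ of $\gamma(s)$ is
$$x_1(s)=\frac{(1-\log x)\,x}{\sqrt{d}},\qquad x={\rm e}^{\mu/\kappa}.$$
Since $\beta=\Pi_{23}\cap{\mathbb M}^2(\rho)$ is cut out of ${\mathbb M}^2(\rho)$ by the condition $x_1=0$, the curve $\gamma$ meets (or tends to meet) $\beta$ precisely at those values of $s$ where $(1-\log x)\,x=0$.

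Next I would analyse the factorization $(1-\log x)\,x=0$. As $x={\rm e}^{\mu/\kappa}>0$ for every finite, non-zero curvature $\kappa$, the factor $x$ never vanishes among the attained values of the parameter, so a genuine zero of $x_1$ forces $1-\log x=0$, that is $x={\rm e}$ (equivalently $\kappa=\mu$). Moreover $\frac{d}{dx}\big((1-\log x)\,x\big)=-\log x$ equals $-1\neq 0$ at $x={\rm e}$, so along a curve of non-constant curvature the first coordinate changes sign there and $\gamma$ genuinely crosses $\beta$. These are the regular points of case (1).

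It remains to treat the only other way the product can vanish, namely the limit $x\to 0^{+}$. Since $\lim_{x\to 0^{+}}x\log x=0$, we get $(1-\log x)\,x\to 0$, hence $x_1\to 0$ and $\gamma$ tends to meet $\beta$. However, the value $x=0$ is not attained: $x={\rm e}^{\mu/\kappa}=0$ would require $\kappa\to 0^{-}$, a geodesic limit excluded from the admissible class $\Omega_{p_0p_1}$. Therefore the contact with $\beta$ occurs only asymptotically; these are the singular points of case (2). The terminology is consistent with the phase portrait of \eqref{phase}, for which $x\to 0$ (where $\log x\to-\infty$ makes the denominator $x\log^2 x$ degenerate) is a boundary of the orbits $F(x,y)=d$ rather than an interior point.

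The computation itself is routine; the only genuinely delicate point is the interpretation of the dichotomy. One must confirm that $x={\rm e}$ is an interior, attained value (giving an honest crossing) while $x=0$ is never reached and appears only as a limiting endpoint of an orbit, so that the two intersection types are genuinely distinct. This is where I would lean on the phase-plane description given by $F(x,y)=d$ in \eqref{F(x,y)}, reading off which orbits run down to $x\to 0$ and thereby checking that the singular case is non-vacuous.
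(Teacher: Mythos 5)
Your proof is correct and follows essentially the same route as the paper: both read off the first coordinate of $\gamma$ from \eqref{gg} as $(1-\log x)x/\sqrt{d}$, observe that $\beta$ is its zero set, and split the vanishing of $(1-\log x)x$ into the attained case $x={\rm e}$ (regular points) and the non-attained limit $x\to 0$ (singular points, since $x=0$ would be a geodesic limit). One side remark is slightly overstated: the claim that $\gamma$ always changes sign and genuinely crosses $\beta$ at $x={\rm e}$ fails in the degenerate case $d=\mu^2{\rm e}^2$, where $x_s=0$ exactly at $x={\rm e}$ and the curve only touches $\beta$ tangentially, but this transversality claim is not needed for the proposition.
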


\begin{proof} 
By the   parametrization \eqref{gg},  $\gamma$ meets the geodesic $\beta$ whenever its first component vanishes, that is,  when $\left(1-\log x\right)x=0$. If $x={\rm e}$, then the  intersection occurs at a regular point of $\gamma$. If    $x=0$   then the point is outside of  the parametrization of $\gamma$, although it may be added in order to obtain complete curves. In this case, if   $x$ tends to $0$, then $\gamma$  tends to $\beta$.
\end{proof}

We now prove a result on the uniqueness of critical curves of $\mathbf{\Theta}_\mu$ in ${\mathbb M}^2(\rho)$. 

\begin{proposition}\label{unique} For each fixed $d>0$, critical curves of $\mathbf{\Theta}_\mu$  in ${\mathbb M}^2(\rho)$ are unique up to change of orientation. In the Euclidean plane $\mathbb{R}^2$, critical curves of $\mathbf{\Theta}_\mu$  are also unique after dilations.
\end{proposition}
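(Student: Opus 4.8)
The plan is to reduce the whole statement to the observation that, for a critical curve, the constant $d$ in the first integral \eqref{F(x,y)} completely determines the curvature as a function of arc length, and then to combine this with the explicit parametrization \eqref{gg}--\eqref{psi(s)} (equivalently, with the fact that a curve in ${\mathbb M}^2(\rho)$ is determined by its curvature up to an isometry).

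First I would fix $d>0$ and regard \eqref{F(x,y)} as a constraint on the orbit $(x,y)=({\rm e}^{\mu/\kappa},x_s)$ of the autonomous system \eqref{phase}. Solving \eqref{F(x,y)} for $y=x_s$, or equivalently reading the first integral \eqref{fin} as writing $\kappa_s^2$ as an explicit function of $\kappa$ alone, shows that along any arc on which $x$ is strictly monotone the function $x(s)$ is obtained by quadrature, $s-s_0=\pm\int \log x\,(d-\mu^2x^2-\rho(1-\log x)^2x^2)^{-1/2}\,dx$, and is therefore determined by $d$ up to a translation of the parameter $s_0$ and the sign corresponding to the two branches $y=\pm(\cdots)$. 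At a point where $y=0$ (a turning value of $x$, that is, a vertex of $\gamma$) the orbit meets the $x$-axis of the phase plane transversally, so the solution continues by reflection; the global solution $x(s)$ is, for the given $d$, unique up to the choice of initial value on the orbit and up to the reflection $s\mapsto -s$. The step requiring the most care is precisely the behaviour at these turning values, where one must verify that the two branches glue into a single smooth curve rather than producing genuinely distinct smooth extensions.

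Next I would translate this into a statement about the curve itself. By the normalization $x(0)=x_0$ used in \eqref{gg}--\eqref{psi(s)}, changing the initial value on a fixed orbit only shifts the arc-length parameter and hence yields the \emph{same} curve, while the sign ambiguity in $x_s$ is exactly the change $s\mapsto -s$, which by Proposition \ref{sym} is the reflection across $\alpha$ and amounts to a change of orientation. Consequently $\kappa(s)=\mu/\log x(s)$ is determined by $d$ up to orientation, and since \eqref{gg} expresses $\gamma$ explicitly through $x(s)$ and through $\psi(s)$ in \eqref{psi(s)} (both built solely from $\kappa$), two critical curves sharing the same $d$ must coincide up to the rigid motion already absorbed in \eqref{gg} and up to orientation. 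This proves the first assertion.

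Finally, for the Euclidean refinement I would exploit the scaling available only when $\rho=0$. A homothety $h_\lambda$ of $\mathbb{R}^3$ rescales arc length by $\lambda$ and curvature by $\lambda^{-1}$, and a direct substitution in \eqref{energy} gives $\mathbf{\Theta}_\mu(h_\lambda\gamma)=\mathbf{\Theta}_{\lambda\mu}(\gamma)$; equivalently, in \eqref{F(x,y)} with $\rho=0$ the quantity $x={\rm e}^{\mu/\kappa}$ is invariant while $(\mu,d)\mapsto(\mu/\lambda,d/\lambda^2)$. Thus the explicit prefactor $1/\sqrt{d}$ appearing in \eqref{tildephi} is itself a dilation, and choosing $\lambda$ suitably normalizes the pair $(\mu,d)$. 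Combining this normalization with the uniqueness already established for fixed $d$, every critical curve of $\mathbf{\Theta}_\mu$ in $\mathbb{R}^2$ is, up to a dilation and a change of orientation, a single curve, which is the second assertion.
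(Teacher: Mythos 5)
Your route is genuinely different from the paper's, and it has two concrete problems. The paper's proof is a short symmetry computation on the first integral: writing \eqref{F(x,y)} as $F(x,y;\mu)=d$, it checks that orientation reversal turns a critical curve of $\mathbf{\Theta}_\mu$ with constant $d$ into a critical curve of $\mathbf{\Theta}_{-\mu}$ with the \emph{same} constant, and that a dilation of $\mathbb{R}^2$ turns critical curves of $\mathbf{\Theta}_\mu$ with constant $d$ into critical curves of $\mathbf{\Theta}_{\lambda\mu}$ with constant $\lambda^2 d$; this is exactly what is invoked later to assume $\mu>0$ and, in $\mathbb{R}^2$, $\mu=1$. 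Your first half instead fixes $\mu$ and claims uniqueness for fixed $d$ by quadrature along ``the orbit'' $\{F=d\}$. The gap is that the level set $\{F=d\}$ need not be a single orbit. In $\mathbb{H}^2(\rho)$, for $d\in\left(0,\rho x_{-}^2\log x_{+}\right)$, it has two connected components separated by the saddle point $P_{-}$, and these produce, for the \emph{same} $d$, a Lilienthal-type curve and a hypercycle-type curve (proof of Theorem \ref{curvesH2}); in $\mathbb{S}^2(\rho)$ with $\rho>4\mu^2$ and $d\in\left(\rho x_{+}^2\log x_{-},\rho x_{-}^2\log x_{+}\right)$, it contains both an unbounded component and the closed orbit around the centre $P_{+}$, producing a Lilienthal-type curve and a braid-type curve (Theorem \ref{curvesS22}). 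These curves share the same $d$ but are not congruent, so your argument yields uniqueness only per connected component of the level set, not per value of $d$. There is also a second gluing issue you do not address: the curves are continued through the peaks $x=1$, where $y\to\pm\infty$ and the phase-plane description degenerates, which is more delicate than the turning points $y=0$ you flag. Finally, since your argument fixes $\mu$ throughout, it never establishes the identification of the $\mathbf{\Theta}_\mu$- and $\mathbf{\Theta}_{-\mu}$-critical curves, which is the content of the proposition that the paper actually uses.

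The Euclidean half over-claims. Your scaling computation $(\mu,d)\mapsto(\mu/\lambda,d/\lambda^2)$ is correct, but it shows precisely that $d/\mu^2$ is a dilation invariant: one can normalize $\mu=1$ \emph{or} $d=1$, never ``the pair $(\mu,d)$''. Consequently the conclusion that every critical curve of $\mathbf{\Theta}_\mu$ in $\mathbb{R}^2$ is, up to a dilation and a change of orientation, a single curve is false; it contradicts Theorem \ref{curvesR2}, where for $\mu=1$ the curves with different $d$ are pairwise non-similar (arch, fishtail, deltoid and bridge types). What is true, and what the paper proves, is only that dilations identify the $\mathbf{\Theta}_\mu$-critical family with the $\mathbf{\Theta}_{\lambda\mu}$-critical family (with $d\mapsto\lambda^2 d$), so that modulo dilations one may take $\mu=1$, the critical curves then still forming a one-parameter family indexed by $d$. (A side remark: for $\rho=0$ the map \eqref{tildephi} is $(u,v)\mapsto\left(u/\sqrt{d},\sqrt{d}\,v,0\right)$, an anisotropic scaling rather than a dilation, so the prefactor $1/\sqrt{d}$ cannot be absorbed the way you suggest.)
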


\begin{proof}
Let $\gamma=\gamma(s)$ be a critical curve of  $\mathbf{\Theta}_\mu$ and we write \eqref{F(x,y)} as   $F(x,y;\mu)=d$ to indicate the dependence of $F$ on   the energy index $\mu$. We reverse the orientation on $\gamma$, say, $\widetilde{\gamma}(s)=\gamma(-s)$. If    $\widetilde{\kappa}$ is  the curvature of $\widetilde{\gamma}$, then   $\widetilde{\kappa}(-s)=-\kappa(s),$  hence  
\begin{eqnarray*}
x(s;\mu)&=&\widetilde{x}(-s;-\mu)\\
y(s;\mu)&=&-\widetilde{y}(-s;-\mu).
\end{eqnarray*}
Using the above equations,  we see that  
$$F(x,y;\mu)=F(\widetilde{x},\widetilde{y};-\mu)=d,$$
proving that the  critical curves for $\mu$ and for $-\mu$ are the same.

In the Euclidean plane,  if we apply a dilation of ratio $\lambda>0$ to $\gamma$, say $\widetilde{\gamma}(s)=\lambda\gamma(s)$, then its   curvature $\widetilde{\kappa}$ satisfies $\widetilde{\kappa}(s/\lambda)=\lambda\kappa(s)$. In this case, we obtain  
\begin{eqnarray*}
x(s;\mu)&=&\widetilde{x}(s/\lambda;\lambda\mu)  \\
y(s;\mu)&=&\frac{1}{\lambda}\widetilde{y}(s/\lambda;\lambda\mu).
\end{eqnarray*} 
Since $\rho00$, it follows that
$$F(x,y;\mu)=\frac{1}{\lambda^2}F(\widetilde{x},\widetilde{y};\lambda\mu)=d$$
Thus the  critical curves of $\mathbf{\Theta}_\mu$ are the same of $\mathbf{\Theta}_{\lambda\mu}$ for the constant of integration $\widetilde{d}=\lambda^2d$. 
\end{proof}

For dilations, the result does not hold if $\rho\not=0$ because  after applying the corresponding relations between $x$, $y$, $\widetilde{x}$ and $\widetilde{y}$ as before, we obtain that the sectional curvature $\rho$ of ${\mathbb M}^2(\rho)$  should also be deformed to $\lambda\rho$, which is not possible since it must be fixed from the beginning.

In the expression \eqref{psi(s)} of $\psi(s)$,   we use equation $y=x_s$ and \eqref{F(x,y)} to make a change of variable obtaining 
 \begin{equation} 
\psi(x)=\mu\int_{x}^{x_{0}} \frac{r\log r}{\left(d-\rho\left(1-\log r\right)^2r^2\right)\sqrt{d-\mu^2r^2-\rho\left(1-\log r\right)^2r^2}}\,dr.\label{psi(x)}
\end{equation}
The function $\psi(x)$ in   \eqref{psi(x)} (or  $\psi(s)$ in \eqref{psi(s)}) provides information of the critical curve $\gamma$ thanks to \eqref{gg}. For example, because  $d>\rho\left(1-\log x\right)^2x^2$, then   $\psi'(x)>0$ if  $x\in\left(0,1\right)$ and $\psi'(x)<0$ if   $x>1$. This implies that  $\psi(x)$ monotonically increases when $x\in (0,1)$ and   $\psi(x)$  decreases if $x>1$. For   fixed $\rho$ and $\mu$, set $\psi(x)=\psi(x;d)$ to indicate the dependence on $d$. Consider the value $\lim_{x\rightarrow 0}\psi(x;d)$ as a function  depending on the parameter $d$ and let $d_*$ be the only value of $d$ such that $\lim_{x\rightarrow 0}\psi(x;d)=0$.

In order to  study the     ODE system \eqref{phase}, we analyze the corresponding phase plane. Let $Q(x,y)$ be  the    tangent vector field  
$$Q(x,y)=\left(y,\frac{-\mu ^2 x^2-\rho  x^2 \log x(\log x-1)-y^2 \log x}{x \log ^2x}\right),$$
which is defined in $(\mathbb{R}^+\setminus\{1\})\times\mathbb{R}$. The singular points of $Q$ are the points of the form  $(x,0)$, where $x$ satisfies  
\begin{equation}
\mu^2=\rho\log x\left(1-\log x\right). \label{cp}
\end{equation}
In particular,  there are not singular points if $\rho=0$. In figure \ref{orbitasr2}, left, we plot the phase plane for $\rho=0$.  Assume that $\rho\neq 0$. From \eqref{cp} and if $\rho^2-4\mu^2\rho\geq 0$, we find
\begin{equation}\label{slog}
\log x=\frac{\rho\pm\sqrt{\rho^2-4\mu^2\rho}}{2\rho}.
\end{equation}
Thus  there are two singular points $P_{+}=(x_{+},0)$ and $P_{-}=(x_{-},0)$, which may coincide, and  correspond  with the choices $+$ and $-$  in \eqref{slog}, respectively.  After some computations, at the  singular points  we have
\begin{equation}\label{hes}
\left(\begin{array}{ll}\dfrac{\partial Q_1}{\partial x}&\dfrac{\partial Q_1}{\partial y}\\
\dfrac{\partial Q_2}{\partial x}&\dfrac{\partial Q_2}{\partial y}\end{array}\right)(x_{\pm},0)=
\left(\begin{array}{cc}0&1\\ \mp\dfrac{\sqrt{\rho^2-4\mu^2\rho}}{\log^2x}&0\end{array}\right).
\end{equation}

\begin{figure}[hbtp]
	\begin{center}\includegraphics[width=.4\textwidth]{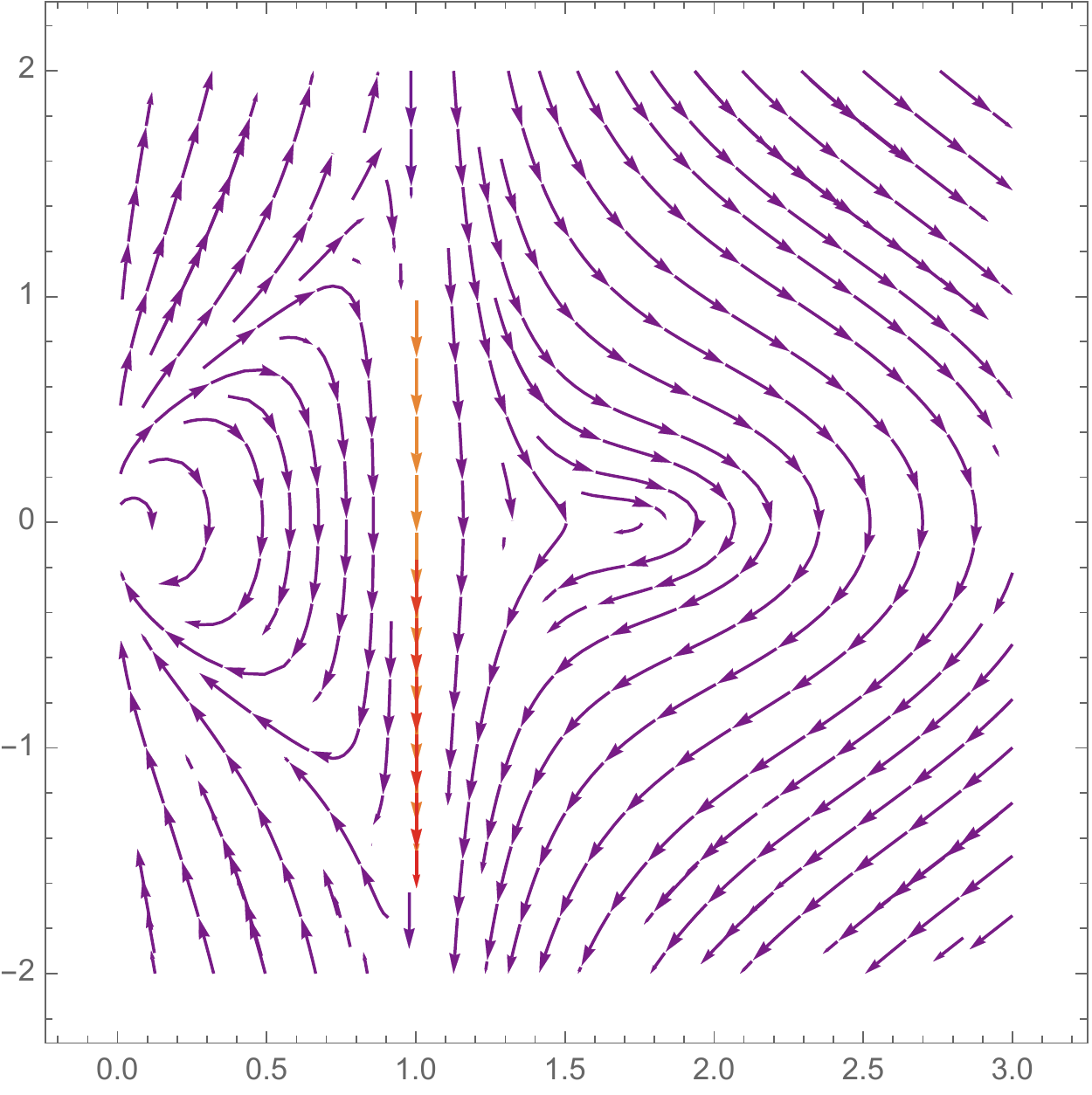}\quad \includegraphics[width=.4\textwidth]{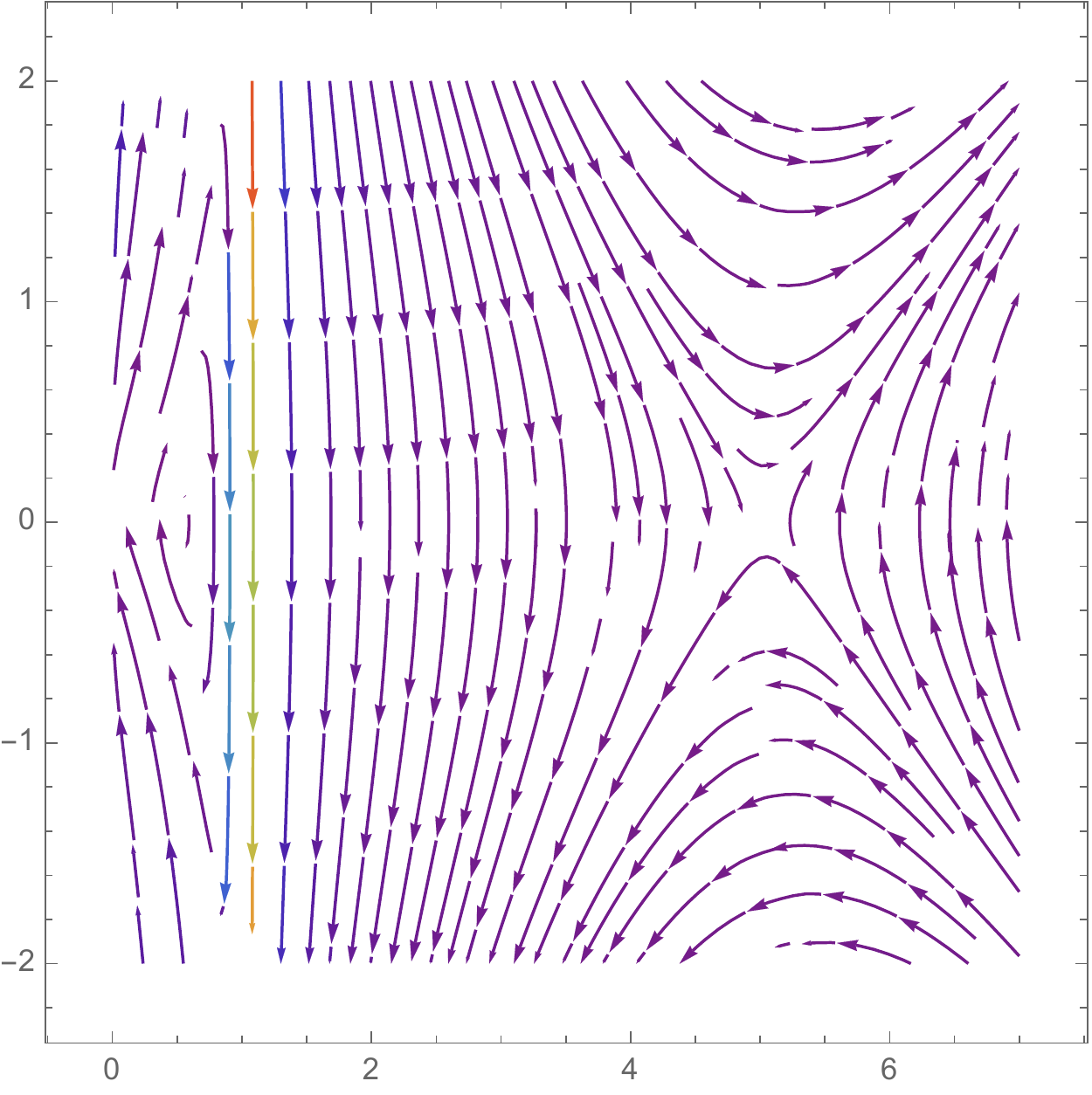}
	\end{center}
		\caption{(Left) Phase plane in the Euclidean plane $\mathbb{R}^2$. (Right) Phase plane in the hyperbolic plane $\mathbb{H}^2(\rho)$: here $\rho=-1$, $\mu=-1$ and the singular points are $P_{+}=(0.54,0)$ (centre) and $P_{-}=(5.04,0)$ (saddle point)}
		\label{orbitasr2}
\end{figure}

The analysis of the type of the singular points depends on the eigenvalues of \eqref{hes}, which  are
\begin{equation}\label{eigen}
\lambda_1=-\sqrt{2 \mu ^2-\rho  \log (x)},\ \lambda_2=\sqrt{2 \mu ^2-\rho  \log (x)}.
\end{equation}
We study the case $\rho>0$. From \eqref{slog}, there are not singular points when $\rho<4\mu^2$. If   $\rho=4\mu^2$,   there is a unique singular point  $P_{+}=P_{-}=P=\left(\sqrt{{\rm e}},0\right)$, where  the matrix \eqref{hes} is not diagonalizable with zero eigenvalue,   so $P$ is a degenerate point. Finally, if $\rho>4\mu^2$, we have two distinct singular points, $P_{+}$ and $P_{-}$, with $x_{-}<x_{+}$. By \eqref{eigen}, the eigenvalues for $P_+$ are two distinct pure imaginary complex numbers and the eigenvalues for $P_{-}$ are two distinct real numbers with different sign. Thus $P_+$ is a center and $P_{-}$  is an unstable  saddle point. See the phase plane in figure \ref{sp}. 

\begin{figure}[hbtp]
	\begin{center}\includegraphics[width=.32\textwidth]{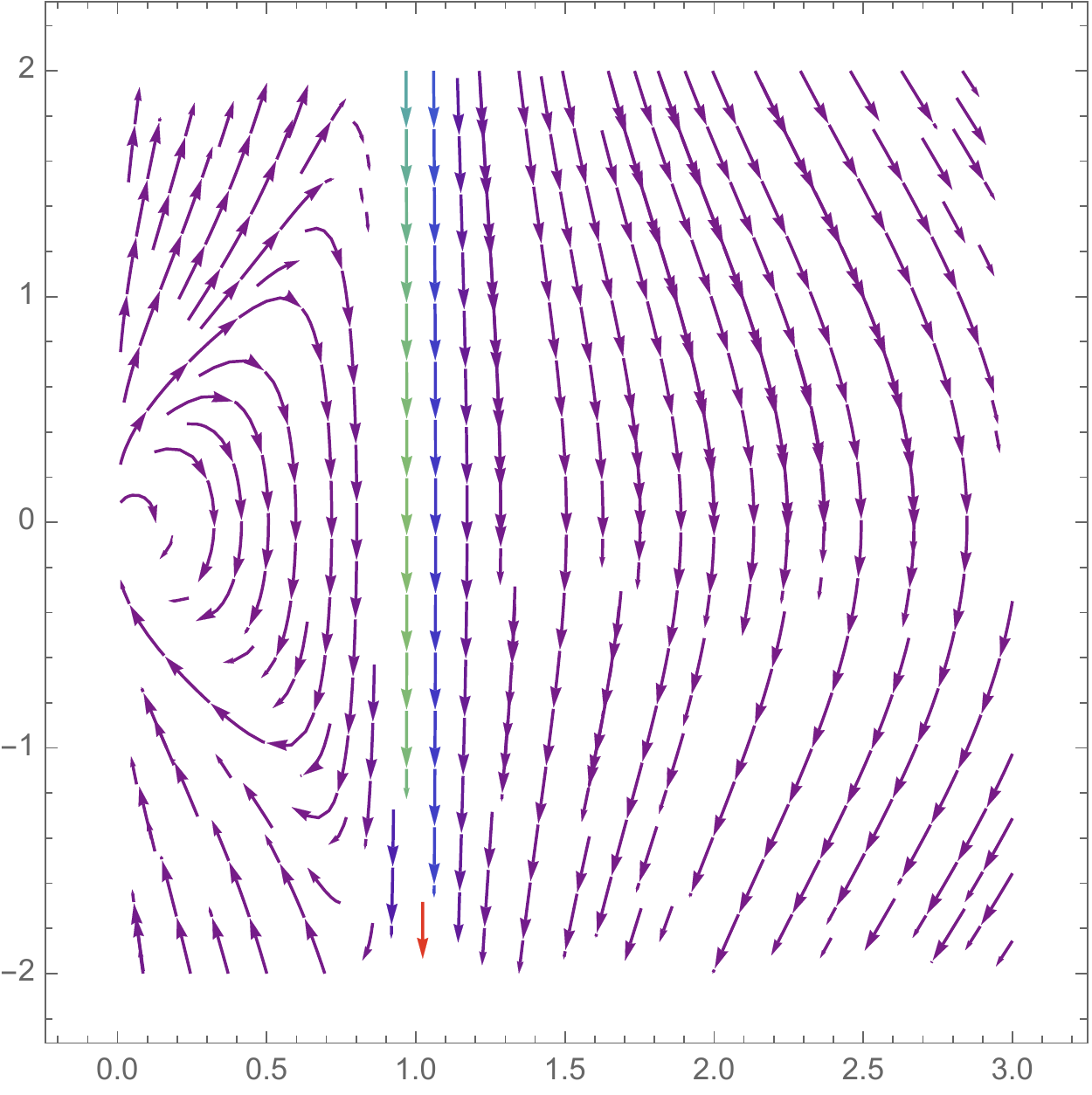}\includegraphics[width=.32\textwidth]{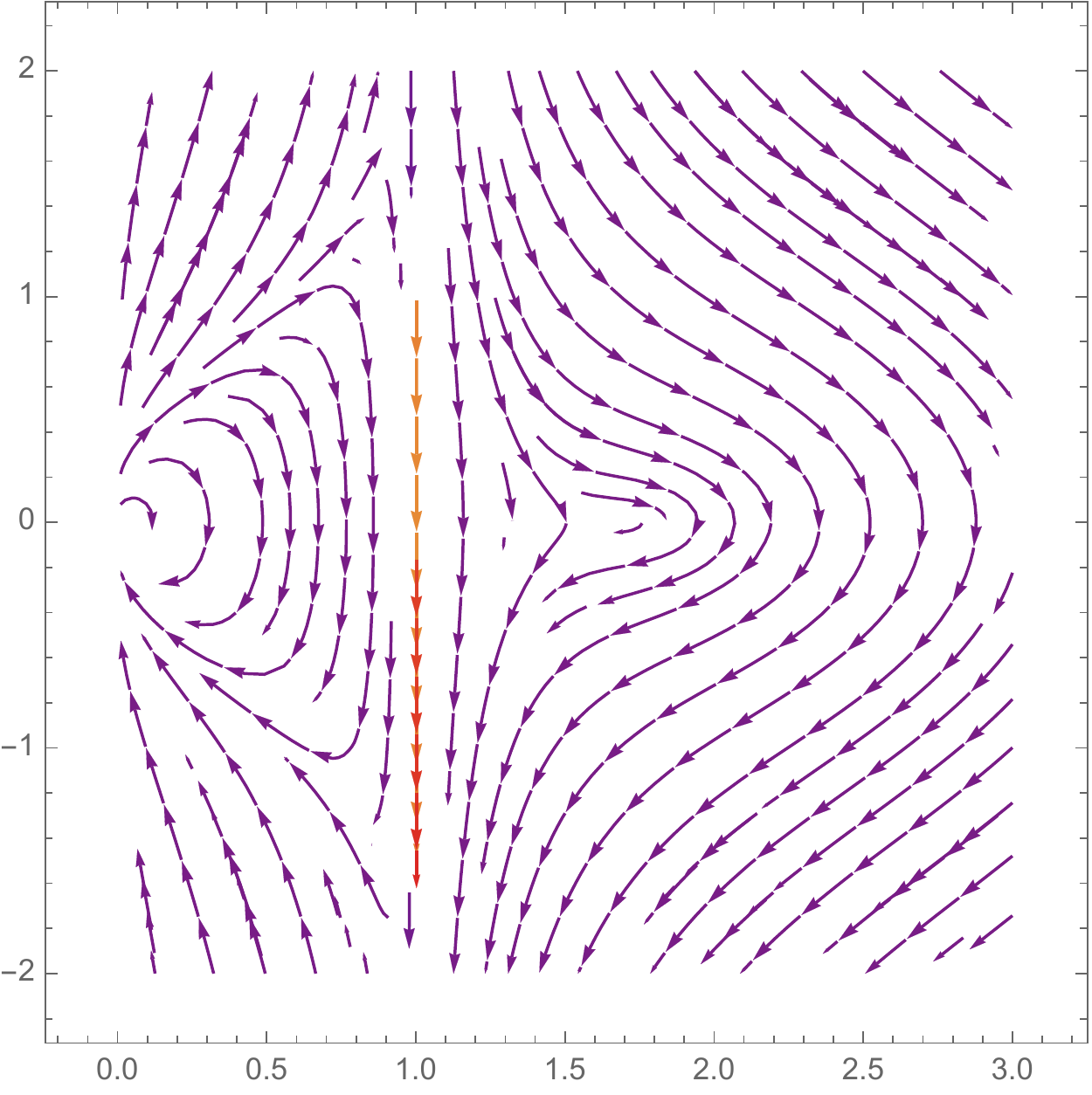}\includegraphics[width=.32\textwidth]{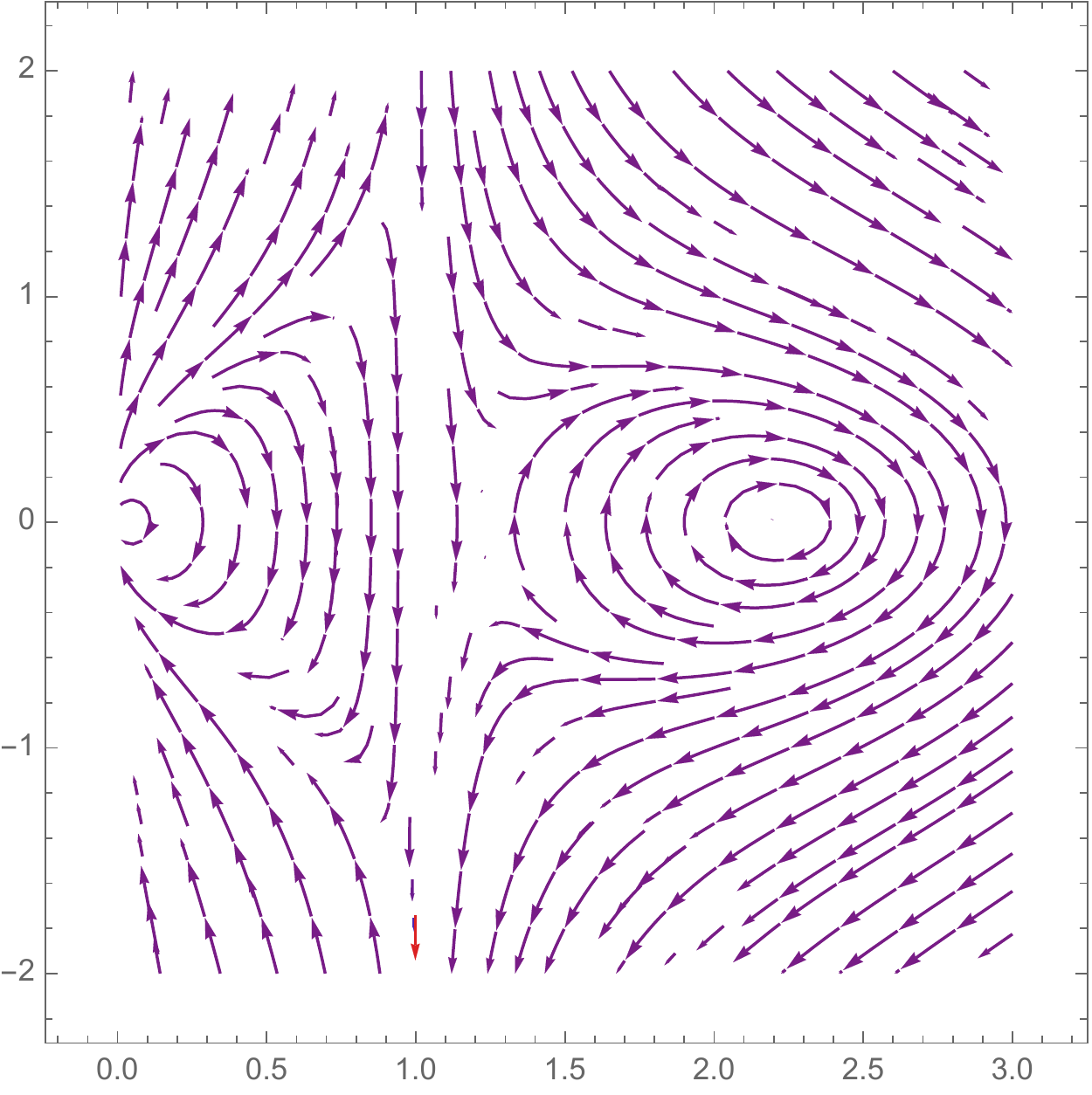}\end{center}
		\caption{Phase plane in the sphere $\mathbb{S}^2(\rho)$. (Left) Case $\rho<4\mu^2$: there are not singular points. (Middle) Case $\rho=4\mu^2$: the point $P=(\sqrt{{\rm e}},0)$ is a degenerate singular point. (Right) Case $\rho>4\mu^2$:  here $\rho=1$ and $\mu=0.4$ and the singular points are $P_{-}=(1.22,0)$, which is a saddle point, and $P_{+}=(2.22,0)$, which is a center}
		\label{sp}
\end{figure}

Let us see the case  $\rho<0$. By \eqref{slog} and since  $\rho^2-4\mu^2\rho$ is always positive, then there are two singular points,  $P_{+}=(x_+,0)$ and $P_{-}=(x_{-},0)$ where now   $x_{+}<1<x_{-}$. Similarly as in the   case $\rho>0$,  the eigenvalues corresponding for $P_{+}$ are two opposite pure imaginary complex numbers, so $P_{+}$ is a centre, whereas for $P_{-}$ the eigenvalues are   two  real numbers with opposite sign, hence    $P_{-}$ is an unstable  saddle point: see the phase plane in figure \ref{orbitasr2}, right.

We finish this section by analyzing  the  existence of closed critical curves. Notice that the problem of existence  of closed critical curves is  a difficult matter.   Here  we establish what is the  relation   between the value $d$ in \eqref{F(x,y)} and the energy index $\mu$ of the energy  $\mathbf{\Theta}_\mu$.

\begin{proposition}\label{closed} Let $d>0$. Then there are no closed critical curves of $\mathbf{\Theta}_\mu$ in $\mathbb{R}^2$ and $\mathbb{H}^2(\rho)$. In   $\mathbb{S}^2(\rho)$, if   $\gamma$  is a closed critical curve of $\mathbf{\Theta}_\mu$, then the number
\begin{equation}
\mu\sqrt{\rho d}\int_0^\varrho \frac{x(t)}{d-\rho(1-\log x(t))^2x^2(t)}\,dt\label{I(d)}
\end{equation}
is a rational multiple of $2\pi$ for some $d\in(\rho x_{+}^2\log x_{-},\rho x_{-}^2\log x_{+})$, where $\varrho$ is the period of the curvature $\kappa$.
\end{proposition}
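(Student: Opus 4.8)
The plan is to reduce the closedness of a critical curve to the condition that its total change in the angular coordinate $\psi$ over one period of the curvature is commensurable with $2\pi$, and then to pin down which values of $d$ are even geometrically possible. First I would explain why no closed curves exist in $\mathbb{R}^2$ and $\mathbb{H}^2(\rho)$: by Proposition~\ref{cuts} and the monotonicity of $\psi(x)$ established just before the statement, a noncompact orbit of the phase system \eqref{phase} produces a curve whose $x$-coordinate is unbounded or escapes toward the singular endpoint $x\to 0$, so $\gamma$ cannot close up. More precisely, in the flat and hyperbolic cases the phase portrait has at most a single saddle and a single center (see the analysis around \eqref{eigen} and figure~\ref{orbitasr2}), and the relevant orbits for $d>0$ are unbounded in $x$; since the ambient metric in these two cases is noncompact along the $v$-direction, an unbounded excursion in $x$ (equivalently in $1/\kappa$) forces the image to be unbounded, precluding closure. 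This is the quick part.

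The substance is the spherical case. Here I would use the parametrization \eqref{gg}, $\gamma(s)=\phi\big((1-\log x)x,\psi(s)\big)$ with $\rho>0$, and observe from \eqref{tildephi} that the $v=\psi$ coordinate enters through the periodic functions $\sin(\sqrt{\rho d}\,\psi)$ and $\cos(\sqrt{\rho d}\,\psi)$. For $\gamma$ to close, the curvature $\kappa$ (hence $x$) must be periodic with some period $\varrho$, which by the phase-plane analysis happens exactly when the orbit is a closed loop around the center $P_+$; this requires $d$ to lie in the range bounded by the values of $F$ at the two singular points, giving the stated interval $d\in(\rho x_+^2\log x_-,\rho x_-^2\log x_+)$. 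Over one such period the first coordinate $(1-\log x)x$ returns to its starting value, so the only obstruction to closure is that the angular coordinate advance by a rational multiple of the full angular period. Computing this advance from \eqref{psi(s)}, the total increment of $\sqrt{\rho d}\,\psi$ over one period $\varrho$ is precisely
\begin{equation*}
\sqrt{\rho d}\,\big(\psi(\varrho)-\psi(0)\big)=-\mu\sqrt{\rho d}\int_0^\varrho \frac{x(t)}{d-\rho(1-\log x(t))^2x^2(t)}\,dt,
\end{equation*}
which matches \eqref{I(d)} up to sign; the closure condition is that this quantity be a rational multiple of $2\pi$.

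The step I expect to be the main obstacle is justifying that the interval for $d$ is exactly the one between the two critical values of $F$ and that on this interval the orbit is a genuine closed loop (not a homoclinic connection through the saddle or an open arc). This requires reading off from \eqref{F(x,y)} the values $F(x_\pm,0)$ at the singular points — using \eqref{slog} to express $\log x_\pm$ — and checking that the level set $\{F=d\}$ is a compact oval encircling $P_+$ precisely when $d$ is strictly between $F$ evaluated at the center and at the saddle. One must also confirm that $\psi$ is well defined along the whole loop, which follows because the denominator $d-\rho(1-\log x)^2x^2$ never vanishes by the remark after \eqref{psi(s)}. Once these facts are in place, the rationality condition on \eqref{I(d)} is equivalent to the first and third coordinates of $\gamma$ in \eqref{tildephi} being simultaneously periodic, i.e.\ to $\gamma$ closing up, which completes the proof.
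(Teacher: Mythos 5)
Your spherical argument is sound and essentially the paper's own: periodic curvature corresponds to closed loops of \eqref{phase} around the center $P_{+}$, these exist exactly for $d$ strictly between $F(x_{+},0)=\rho x_{+}^2\log x_{-}$ and $F(x_{-},0)=\rho x_{-}^2\log x_{+}$, and closure then reduces to the angular advance $\sqrt{\rho d}\,\psi(\varrho)$ being a rational multiple of $2\pi$.

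The gap is in $\mathbb{R}^2$ and $\mathbb{H}^2(\rho)$, where your exclusion of closed curves rests on the claim that for $d>0$ the relevant orbits are unbounded in $x$, so that the curve itself is unbounded. That claim is false. In $\mathbb{R}^2$, equation \eqref{F(x,y)} with $\rho=0$ reads $y^2\log^2x+\mu^2x^2=d$, so every orbit satisfies $x\le\sqrt{d}/\mu$: the orbits are bounded arcs limiting to the origin of the phase plane, and the corresponding curves (the arch, fishtail, deltoid and bridge curves of Theorem \ref{curvesR2}) are bounded subsets of the plane. Likewise in $\mathbb{H}^2(\rho)$, for $0<d<\rho x_{-}^2\log x_{+}$ the level set $F=d$ has a component with $x\le x_0<x_{-}$, which is bounded in $x$ and produces the bounded Lilienthal type curves of Theorem \ref{curvesH2}. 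Your unboundedness argument says nothing about precisely these curves, so closure is not excluded for them. There is a second, related omission: in $\mathbb{H}^2(\rho)$ the phase plane \emph{does} contain a center, hence genuine closed orbits (and, by Corollary \ref{ccc}, a closed constant-curvature circle). These are ruled out only because they all occur for $d\le F(x_{+},0)<0$ or, at the level sets around $P_{+}$, for $d<0$; that is, the hypothesis $d>0$ must be used in an essential way, and your proof never does so.

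The correct completions are as follows. In $\mathbb{R}^2$: a closed curve has periodic curvature, so its phase orbit is an equilibrium or a closed loop; equation \eqref{cp} has no solution for $\rho=0$, so there are no equilibria, hence no closed loops (a periodic orbit of a planar autonomous system must enclose an equilibrium), and constant-curvature critical curves do not exist by Corollary \ref{ccc}. In $\mathbb{H}^2(\rho)$ the paper argues directly from the parametrization: in \eqref{tildephi} with $\rho<0$ the coordinate $v=\psi$ enters through $\sinh(\sqrt{-\rho d}\,v)$ and $\cosh(\sqrt{-\rho d}\,v)$, which are injective on the relevant range rather than periodic, so a closed curve whose curvature has period $\varrho$ must satisfy $\psi(\varrho)=0$, i.e.
\begin{equation*}
\int_0^\varrho \frac{x(t)}{d-\rho\left(1-\log x(t)\right)^2x^2(t)}\,dt=0,
\end{equation*}
which is impossible because for $d>0$ and $\rho<0$ the integrand is strictly positive. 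This positivity step is exactly what disposes of the bounded orbits that your unboundedness argument misses, and it is where the hypothesis $d>0$ enters.
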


\begin{proof} 
Since closed curves have periodic curvatures,  we study  when  equation \eqref{fin} admits periodic solutions, that is, closed orbits in the phase plane \eqref{phase}.  Closed orbits only appear in a local neighborhood of centre points, and consequently, we   restrict   to the case   $\rho\neq 0$ and $\rho^2-4\mu^2\rho>0$: this excludes that the ambient space is $\mathbb{R}^2$.

Let $\gamma$ be a closed critical curve in $\mathbb{H}^2(\rho)$. If $\varrho>0$ is the period of the   curvature $\kappa(s)$, from \eqref{psi(s)} we have that 
$$
\int_0^\varrho \frac{x(t)}{d-\rho\left(1-\log x(t)\right)^2x^2(t)}\,dt=0. $$
  This is impossible because the denominator is positive, proving  the result in  $\mathbb{H}^2(\rho)$.

Suppose that the ambient space is $\mathbb{S}^2(\rho)$ with   $\rho>4\mu^2$. In order to obtain the values of the constant of integration $d$ for which there exist periodic solutions, notice that orbits in this case cut the $x$-axis in either one or three points, in the latter, we obtain the closed ones. Indeed, we have that the function $F(x,0)=\mu^2x^2+\rho\left(1-\log x\right)^2x^2$  increases in the interval $\left(0,x_{-}\right)$, decreases as $x$ moves from $x_{-}$ to $x_{+}$ and, finally,   increases  if $x>x_+$.  We deduce that  $F(x,0)$ reaches a local maximum at $x_{-}$ and a local minimum at $x_{+}$. Therefore, there are exactly three cuts if and only if $d=F(x,0)$ for $x\in\left(x_{-},x_{+}\right)$. Since the function  $F(x,0)$ is decreasing in the interval $(x_{-},x_{+})$, a periodic solution appears if and only if
$$F(x_{+},0)=\rho x_{+}^2\log x_{-}<d<\rho x_{-}^2\log x_{+}=F(x_{-},0).$$
Again by \eqref{gg} and  \eqref{psi(s)}, if $\gamma$ is a closed curve, then the number $\sqrt{\rho d}\psi(\varrho)$ is a rational multiple of $2\pi$, obtaining \eqref{I(d)}.  
\end{proof}

\section{Classification of the extremal curves}\label{sec6}

Once obtained in Section \ref{sec5} that the generating curves of rotational surfaces of constant astigmatism in space forms are parametrized by \eqref{gg}, we   give the classification of these curves according to their shapes. To this end, in each one of the three following subsections we will analyze the phase plane \eqref{phase} in each   $2$-space form giving  a systematics on the names of all possible shapes.  By Propositions \ref{sym} and \ref{cuts},  an extremal curve $\gamma\subset {\mathbb M}^2(\rho)$ is symmetric about the geodesic $\alpha=\Pi_{13}\cap {\mathbb M}^2(\rho)$ and $\gamma$ may meet the geodesic $\beta=\Pi_{23}\cap {\mathbb M}^2(\rho)$ at two points, possibly singular points. We begin by summarizing the   geometric description of the critical curves of $\mathbf{\Theta}_\mu$ in ${\mathbb M}^2(\rho)$.

A first set of shapes are the curves obtained by   Lilienthal for the rotational surfaces  of constant astigmatism in  $\mathbb{R}^3$ (\cite{Lilienthal}). Similar shapes will appear in $\mathbb{S}^2(\rho)$ and $\mathbb{H}^2(\rho)$ and we will call them {\it Lilienthal's type curves}. In some of these shapes, the curves present peaks, that is, points where the curve is not defined and correspond with the value $x=1$ in \eqref{F(x,y)}. Moreover, by Proposition \ref{cuts}, those curves that tend to meet the geodesic $\beta$ at the end points, the intersection must occur orthogonally. The Lilienthal's type curves are the following (see figure \ref{curvasr21} for the shapes in Euclidean space):
\begin{enumerate}
\item \emph{Arch type curves}. Concave graphs of a function defined in a bounded interval  of the geodesic $\beta$. This function has a maximum   where the curve meets the symmetry axis.
\item \emph{Fishtail type curves}. Non simple curves with one intersection point on the symmetry axis. They have exactly two peaks  and a local minimum between the two peaks on the symmetry axis.
\item \emph{Deltoid type curves}. Simple curves having the shape of a deltoid  and with two peaks  and  one vertex at the intersection point between the geodesics $\alpha$ and $\beta$.
\item \emph{Bridge type curves}. Simple curves having the shape of a bridge. The towers bend away from each other and finish at exactly two peaks. Moreover, the cable joining the towers reach a local minimum on the symmetry axis.
\end{enumerate}

We turn now to those extremal curves that do not belong to Lilienthal's type family. These shapes  only appear in   $\mathbb{S}^2(\rho)$ when $\rho>4\mu^2$ and in  $\mathbb{H}^2(\rho)$: see figures \ref{curvass22especial} and \ref{curvash22} respectively. We give the next definitions.  
\begin{enumerate}
\item \emph{Anti-deltoid type curves}. Non simple curves having three vertices. These curves only appear in $\mathbb{S}^2(\rho)$ for $\rho>4\mu^2$. Moreover, one  of the vertices is located at the intersection point between $\alpha$ and $\beta$. The opposite segment to this vertex gives one turn around the north pole before closing.
\item \emph{Anti-fishtail type curves}. This case only appears in $\mathbb{S}^2(\rho)$. Non simple curves having the fishtail type shape but not between the two peaks the curve turns around the north pole before closing. 
\item \emph{Anti-arch type curves}. This case only appears in $\mathbb{S}^2(\rho)$.  Non simple curves having the arc type shape but now between the two peaks the curve turns around the north pole before closing. 
\item \emph{Anti-bridge type curves}. This case only appears in $\mathbb{S}^2(\rho)$. Simple curves having the shape of a bridge where the cable goes outside towers. Towers bend towards each other finishing exactly with two peaks   and the cable gives more than half turn around the north pole.
\item \emph{Cross type curves}. This case only appears in $\mathbb{S}^2(\rho)$. Non simple curves having some intersection points in the symmetry axis $\alpha$. They have two peaks.  After $x=1$, the curves give as many turns  around the north pole as needed before meeting the symmetry peak so they may have more than one self-intersection points. 
\item \emph{Braid type curves}. Complete curves with periodic curvature that roll up around a circle giving turns around the pole of the parametrization. When the curvature of the critical curve is not constant, this case only appears in $\mathbb{S}^2(\rho)$ for $\rho>4\mu^2$ and the curve may close up if the integral in \eqref{I(d)} is a multiple of $2\pi$. However, Euclidean circles are also included here as limit cases which can also appear in $\mathbb{H}^2(\rho)$.
\item \emph{Hypercycle type curves}. Concave graphs of a function defined on the entire geodesic $\beta$ and going further from it. This function has a minimum precisely where the curve meets the symmetry axis $\alpha$. These curves only appear in $\mathbb{H}^2(\rho)$. Hypercycles are included here as limit cases which   appear in $\mathbb{H}^2(\rho)$.
\item \emph{Anchor type curves}. This case only appears in $\mathbb{H}^2(\rho)$. Simple curves having two disjoint symmetric components with respect to the geodesic $\alpha$. Each component begins at the geodesic $\beta$ and cross it one more time before they tend to $\alpha$. It is the only one where $x$ does not have critical points.
 
\end{enumerate}
For the non-Lilienthal's type curves, the critical curves tend to meet the geodesic $\beta$ orthogonally at the end points, except for those described in the items 6 and 7 above. 

\subsection{The Euclidean plane $\mathbb{R}^2$}

After a dilation (Proposition \ref{unique}), we assume     that the index energy is $\mu=1$ and thus, the extremal curves are parametrized by  the constant of integration $d$ in \eqref{F(x,y)}.   In fact,   all these curves are determined by the relation between the  integration constant $d$ and the values $\rho+\mu^2=1$ and 
$d_*={\rm e}^2/4$.

\begin{theorem}\label{curvesR2} The critical curves of $\mathbf{\Theta}_1$ in $\mathbb{R}^2$ form a one-parameter family of curves depending on the constant of integration $d>0$ (see figure \ref{curvasr21}).
\begin{enumerate}
\item Case $d\leq 1$. The curves are of arch type.
\item Case $d\in\left(1,{\rm e}^2/4\right)$. The curves are of fishtail type.
\item Case $d={\rm e}^2/4$. The curves are of deltoid type.
\item Case $d>{\rm e}^2/4$. The curves are of bridge type. If $d\in({\rm e}^2/4,{\rm e}^2)$, the minimum of the cable is located in the positive part of the $\alpha$ axis; if $d={\rm e}^2$, the point $(0,0)$ represents this minimum, and; if $d>{\rm e}^2$, this minimum has negative component: in this case, the curves cut twice more times the $\beta$ axis making an angle $$\theta_d=\arccos\left(-\frac{{\rm e}}{\sqrt{d}}\right)$$ 
that varies from $\pi$ to $\pi/2$ as $d$ increases.
\end{enumerate}
\end{theorem}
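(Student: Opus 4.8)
The plan is to specialize the general machinery of Section \ref{sec5} to the flat case and then read off the shape of each orbit from the phase portrait of Figure \ref{orbitasr2} (left). By Proposition \ref{unique} I may normalize $\mu=1$ after a dilation, so that \eqref{F(x,y)} becomes the explicit first integral $F(x,y)=y^2\log^2 x+x^2=d$, the profile curve is parametrized by \eqref{gg} with first coordinate proportional to $(1-\log x)x$ and curvature $\kappa=1/\log x$, and $\psi$ is given by \eqref{psi(x)} with $\rho=0$. Since by Corollary \ref{ccc} there are no constant-curvature critical curves and (for $\rho=0$) no singular points in the phase plane, every critical curve satisfies $F=d$ for a single $d>0$, so the family is genuinely parametrized by $d$. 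To it I attach the four distinguished values of $x$ with their geometric meaning established earlier: the turning point $y=0$ occurs at $x=\sqrt d$ and lies on the symmetry axis $\alpha$ (Proposition \ref{sym}); $x=1$ is a peak where $\kappa\to\infty$; and $x={\rm e}$ is a regular intersection with $\beta$ while $x\to 0$ is a singular one (Proposition \ref{cuts}).

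Because $x$ runs over $(0,\sqrt d\,]$ along any orbit, the first observation is that peaks occur if and only if $\sqrt d>1$, i.e. $d>1$. For $d\le 1$ one has $\kappa=1/\log x<0$ throughout, so $\gamma$ is a concave graph over $\beta$ with a single maximum at $\alpha$: the arch type. For $d>1$ the curve splits into a convex arc carrying the symmetry point ($x\in(1,\sqrt d\,]$), joined at two cusps $x=1$ to two concave arcs ($x\in(0,1)$) that reach $\beta$ as $x\to 0$. The decisive quantity is the height of this singular end, $\lim_{x\to 0}\psi$. I would compute it from \eqref{psi(x)} by the substitution $r=\sqrt d\,\sin\theta$, which reduces the integral to $\int_0^{\pi/2}\sin\theta\log\sin\theta\,d\theta=\log 2-1$ and yields, for the second coordinate of the end,
\[
E(d)=\tfrac12\log d+\log 2-1 .
\]
Hence $E(d)=0$ exactly when $d={\rm e}^2/4=d_*$, with $E(d)<0$ for $d<d_*$ and $E(d)>0$ for $d>d_*$.

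The classification for $d>1$ then follows from the behaviour of the second coordinate along one branch. By the sign of $\psi'(x)$ recorded in Section \ref{sec5} ($\psi'(x)>0$ for $x<1$ and $\psi'(x)<0$ for $x>1$), this coordinate is not monotone: it rises to an extremum at the cusp and then returns, so each branch terminates on the side of $\alpha$ fixed by the sign of $E(d)$. Consequently, for $1<d<{\rm e}^2/4$ the two symmetric branches end on opposite sides of $\alpha$ and must meet once on $\alpha$, producing the single self-intersection of the fishtail; for $d={\rm e}^2/4$ both ends meet at $\alpha\cap\beta$, giving the deltoid; and for $d>{\rm e}^2/4$ each branch stays on its own side, so the curve is simple and of bridge type.

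It remains to split the bridge case and verify the quantitative claims. The minimum of the cable is the symmetry point, whose first coordinate is $1-\tfrac12\log d$; this is positive, zero or negative according as $d<{\rm e}^2$, $d={\rm e}^2$ or $d>{\rm e}^2$, placing the minimum on the positive $\alpha$-axis, at the origin, or on the negative $\alpha$-axis. When $d>{\rm e}^2$ one has ${\rm e}<\sqrt d$, so the convex arc crosses $\beta$ at the regular value $x={\rm e}$, twice by symmetry; differentiating \eqref{gg} gives the unit tangent there and hence the angle $\theta_d=\arccos(-{\rm e}/\sqrt d)$ with $\beta$, decreasing from $\pi$ to $\pi/2$ as $d$ increases from ${\rm e}^2$. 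Collecting the four ranges of $d$ proves the theorem. The main obstacle is the global shape analysis for $d>1$: one must justify that each cusp is a genuine reversal of the curve (so that the second coordinate is non-monotone) and that the self-intersection count is exactly as stated; once this topological input is secured, the integral evaluation of $E(d)$ and the tangent computation at $x={\rm e}$ are routine.
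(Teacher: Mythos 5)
Your proposal is correct and follows essentially the same route as the paper's proof: normalize $\mu=1$ by Proposition \ref{unique}, parametrize the curve by $x$ via \eqref{gg}--\eqref{psi(x)}, locate the turning point $x_0=\sqrt{d}$ on the symmetry axis and the cusps at $x=1$, classify by the sign of the limiting height of the singular end (fishtail/deltoid/bridge) and by the sign of $(1-\log x_0)x_0$ for the bridge subcases, and finish with the same tangent computation giving $\cos\theta_d=-{\rm e}/\sqrt{d}$. The only (minor) difference is how the threshold is pinned down: you evaluate the end height $E(d)=\tfrac12\log d+\log 2-1$ directly via the substitution $r=\sqrt{d}\sin\theta$ and the integral $\int_0^{\pi/2}\sin\theta\log\sin\theta\,d\theta=\log 2-1$, whereas the paper computes the antiderivative in \eqref{psi(x)} in closed form (its equation \eqref{paramx}) and reads off the same limit $\tfrac12\log d+\log 2-1$, so both arguments identify $d_*={\rm e}^2/4$ identically.
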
 

\begin{proof} 
 From \eqref{gg} and \eqref{psi(x)}, a critical curve $\gamma$ of  $\mathbf{\Theta}_1$ is parametrized   by
\begin{equation}
\gamma(x)=\frac{1}{\sqrt{d}}\left((1-\log x)x,\int\frac{x\log x}{\sqrt{d-x^2}}\,dx\right). \nonumber
\end{equation}
The integral can be computed obtaining 
\begin{equation}
\gamma(x)=\frac{1}{\sqrt{d}}\left((1-\log x)x, \sqrt{d-x^2}(\log x-1)+\sqrt{d}\log\left(\frac{\sqrt{d}+\sqrt{d-x^2}}{x}\right)\right). \label{paramx}
\end{equation}
From \eqref{F(x,y)}, the function $F(x,0)=x^2$ is increasing for any $x>0$. This implies that, for any fixed $d>0$, the associated orbit $F(x,y)=d$  intersects the $x$-axis. Moreover,   this happens precisely at $x_0=\sqrt{d}$, that is, at the point of the orbit where the maximum value of $x$ is reached. 

We will classify and describe the critical curves only for values of $x$ in the interval  $\left(0,x_0\right)$, that is, the half of the  curves because the other half is obtained  by symmetry (Proposition \ref{sym}). 

\begin{enumerate}
\item Case $x_0=\sqrt{d}\leq 1$. The function $\psi(x)$ monotonically increases until $\psi$ reaches its maximum at $x_0$, namely, $\psi(x_0)=0$. At the same time, the function $(1-\log x)x$ also increases from $x=0$ to $x=x_0$. This implies that $\gamma$ is  of arch type.
\item Case $x_0=\sqrt{d}\in\left(1,{\rm e}/2\right)$. The curve $\gamma$ is   defined by two parts. If $x\in (0,1)$, the function  $\psi(x)$ increases from a negative number ($\lim_{x\rightarrow 0}\psi(x)<0$) to a positive one ($\lim_{x\rightarrow 1}\psi(x)>0$). Therefore, there is a point where $\gamma$ intersects the geodesic $\alpha$ and, by symmetry, it represents a self-intersection point. This self-intersection point occurs far from the geodesic $\beta$ because $\lim_{x\rightarrow 0}\psi(x)<0$. On the other hand, the second part of $\gamma$ corresponds with $x>1$, where  the function $\psi(x)$ decreases from $\lim_{x\rightarrow 1}\psi(x)>0$ until the value   $\psi(x_0)=0$. Since   that the function $(1-\log x)x$ is decreasing if $x>1$, the curve is   of fishtail type.
\item Case $x_0=\sqrt{d}={\rm e}/2$. The behavior of $\gamma$ is as in previous case but now  the self-intersection point appears when $x\rightarrow 0$, that is, precisely at the intersection point with $\beta$. Thus the curve is of deltoid type.
\item Case $x_0=\sqrt{d}>{\rm e}/2$. The curve is again defined in two parts as in the items $2$ and $3$. There are no self-intersection points because $\lim_{x\rightarrow 0}\psi(x)>0$ and $\psi(x)$ increases in the interval $(0,1)$. For this value of $x_0$, and for $x>1$, the function $\psi(x)$ decreases until $\psi(x_0)=0$. Now the value   $(1-\log x_0)x_0$ may be positive, zero or negative depending if $x_0<{\rm e}$, $x_0={\rm e}$ or $x_0>{\rm e}$, respectively. In each case, we obtain bridge type curves of the three different possible cases. Using $y=x_s$, the tangent vector field $T$ of $\gamma$  can be computed in terms of the arc-length parameter obtaining
\begin{equation}
T(s)= \gamma'(s)=-\frac{1}{\sqrt{d}}\left(y\log x,x\right)=-\frac{1}{\sqrt{d}}\left(\sqrt{ d-x^2},x\right),\nonumber
\end{equation}
where in the last equality we use equation \eqref{F(x,y)} for $\rho=0$. The angle $\theta(s)$  between $T(s)$ and any parallel curve to $\beta$ satisfies
$$\cos\theta(s)= -\frac{x(s)}{\sqrt{d}}.$$
In particular, regular points representing the intersection of $\gamma$ with the axis $\beta$ appear when $x={\rm e}$.
\end{enumerate}
\end{proof}

\begin{figure}[hbtp]
	\begin{centering}{\includegraphics[angle=90,width=2.45cm,height=2.45cm]{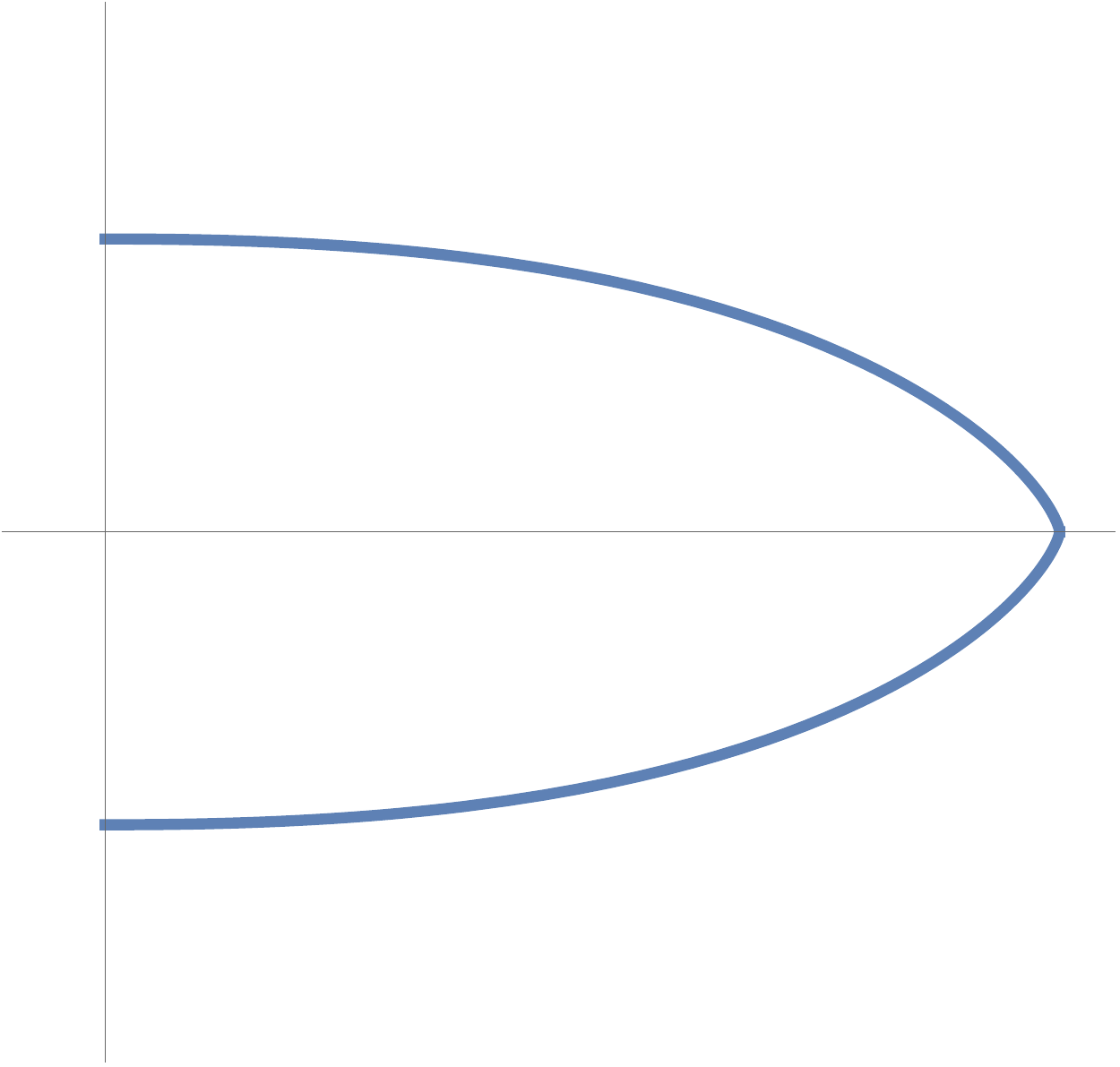}
	\includegraphics[angle=90,width=2.45cm,height=2.45cm]{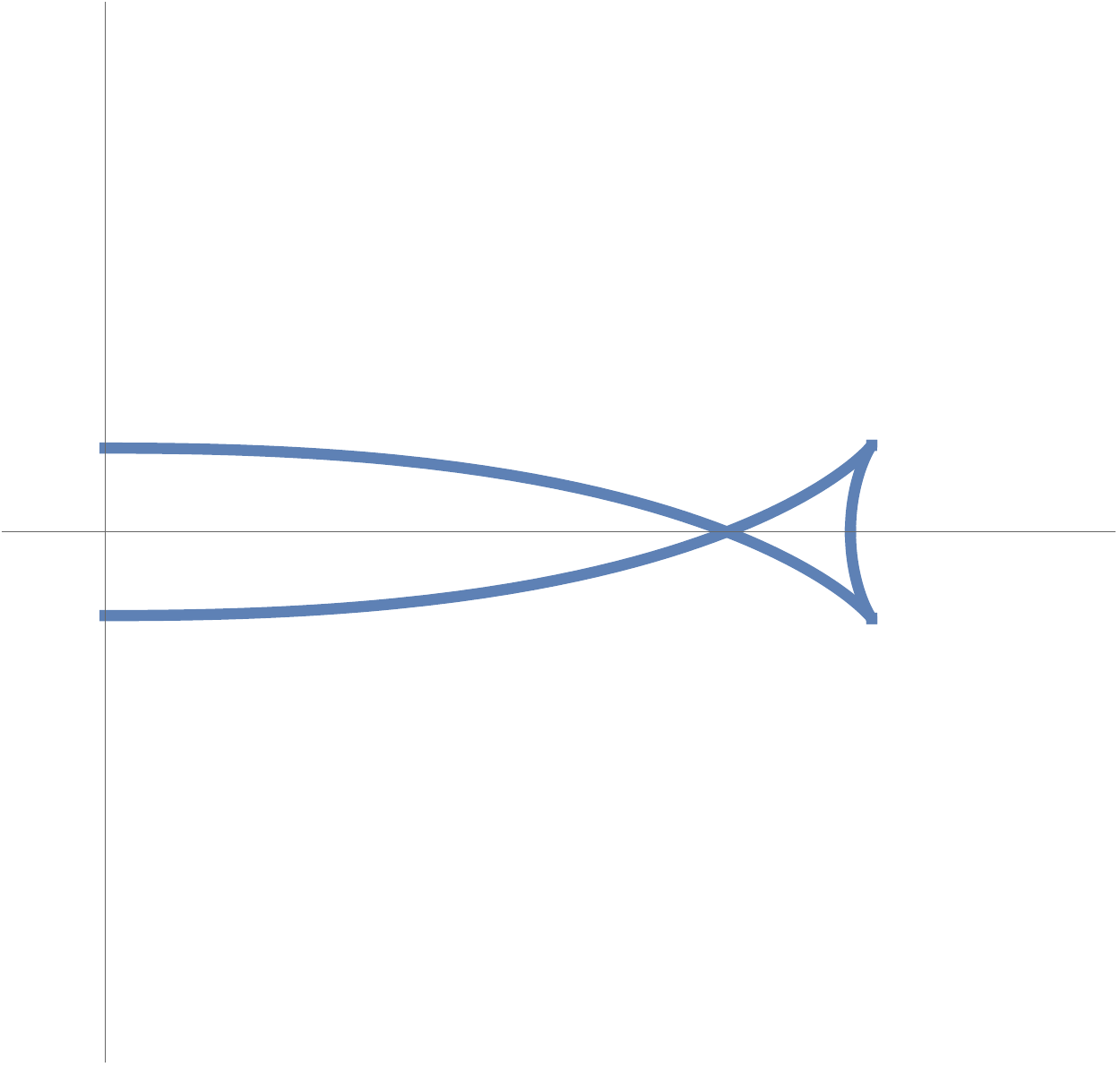}
	\includegraphics[angle=90,width=2.45cm,height=2.45cm]{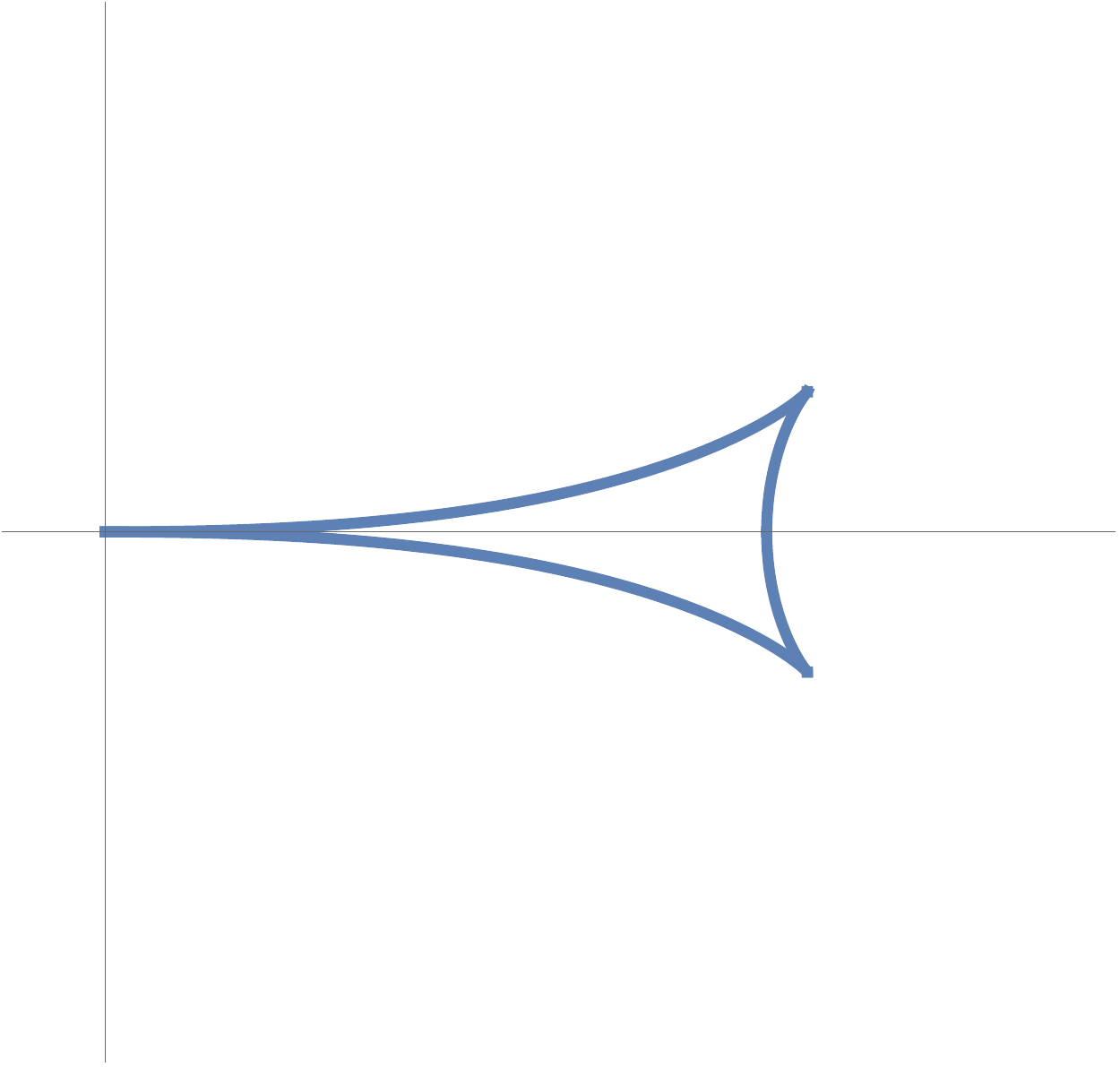}
	\includegraphics[angle=90,width=2.45cm,height=2.45cm]{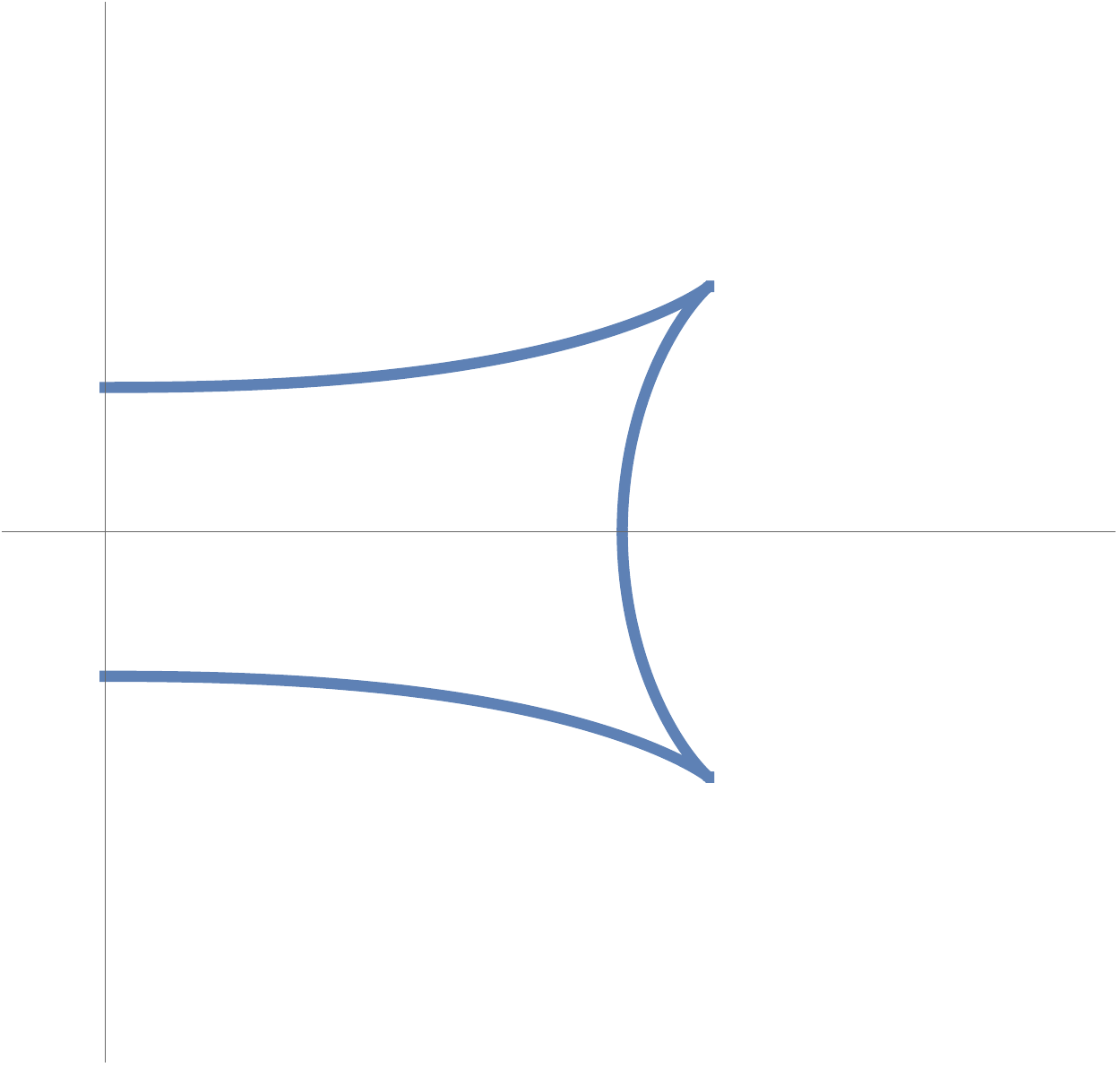}
	\includegraphics[angle=90,width=2.45cm,height=2.45cm]{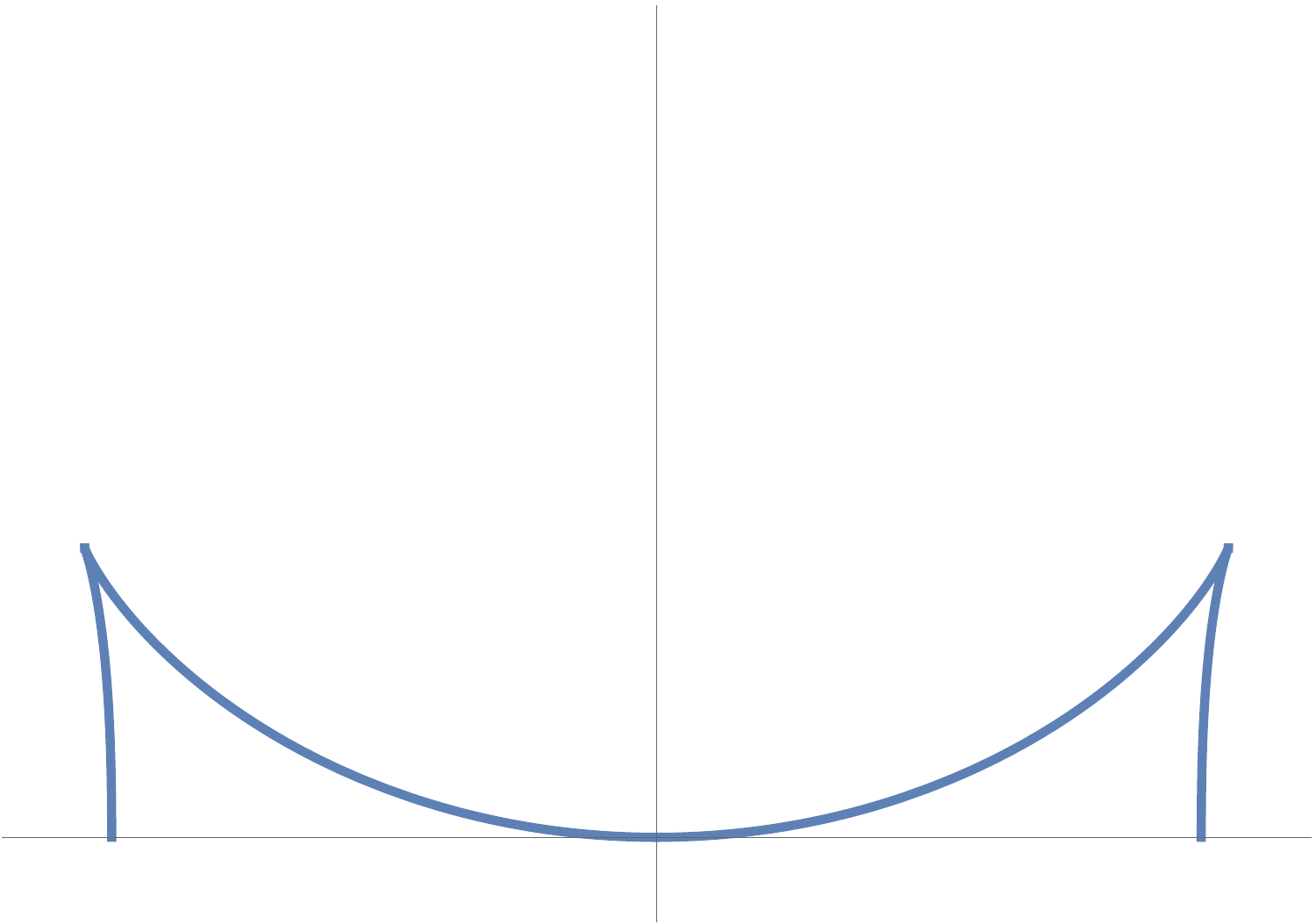}
	\includegraphics[angle=90,width=2.45cm,height=2.45cm]{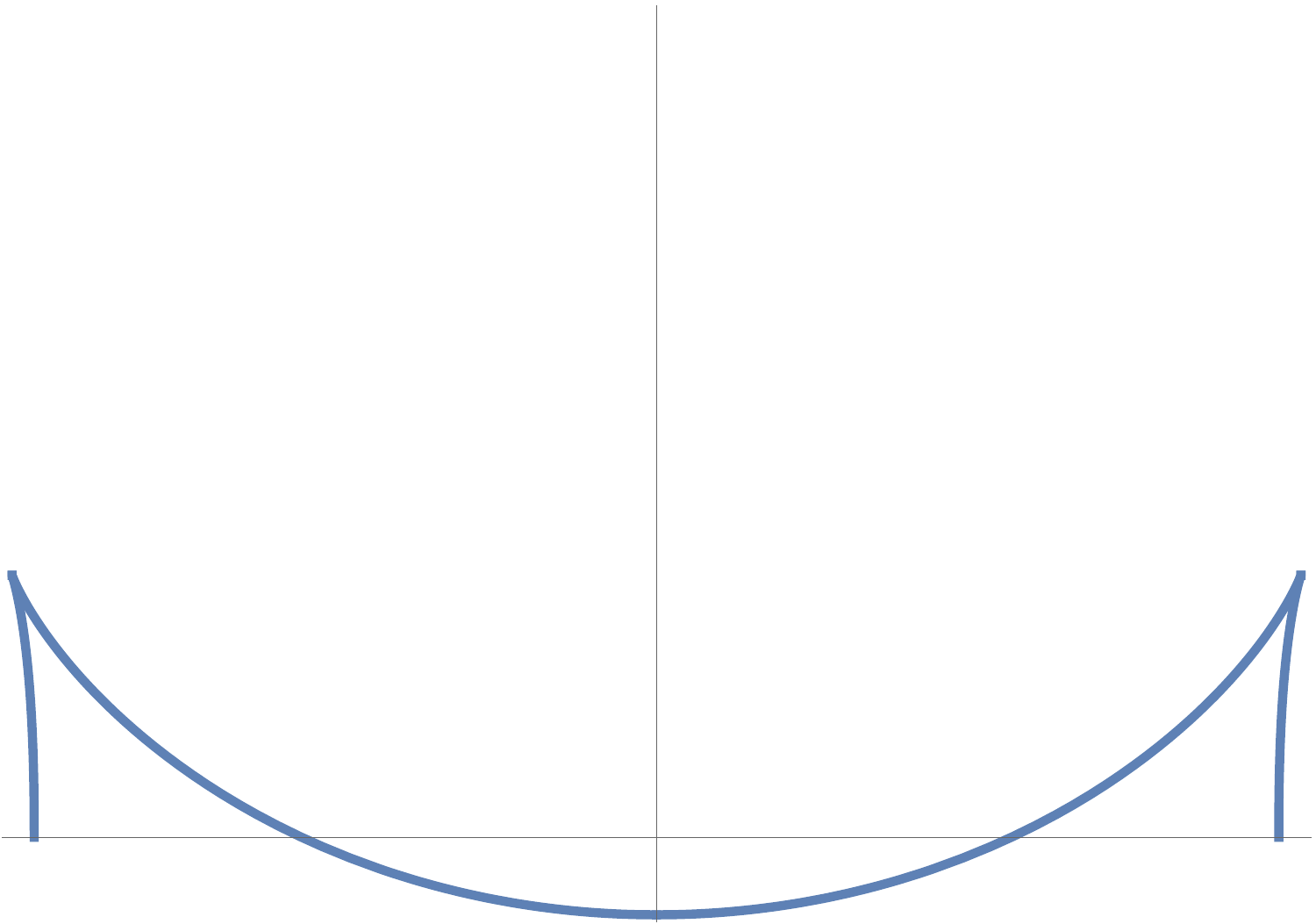}}
		\caption{Critical curves of $\mathbf{\Theta}_1$ in the Euclidean plane depending on the constant $d>0$. The geodesic $\beta$ is represented by the horizontal axis, whereas $\alpha$ is the vertical one. From left to right: arch type ($d=1$), fishtail type ($d=1.55$), deltoid type ($d={\rm e}^2/4$) and the three different possible cases of bridge type ($d=2.5$, $d={\rm e}^2$ and $d=9$, respectively)}
		\label{curvasr21}
	\end{centering}
\end{figure}
\subsection{The sphere $\mathbb{S}^2(\rho)$}

Consider  the  $2$-sphere $\mathbb{S}^2(\rho)$. By  Proposition \ref{unique}, it is enough to consider that the energy index $\mu$ is positive. We  know by Section \ref{sec5} that if $\rho<4\mu^2$ there are no singular points; if $\rho=4\mu^2$, there is only one  degenerate singular point $P$, and; if $\rho>4\mu^2$, there are two singular points, $P_{+}$ and $P_{-}$ with $x_{-}<x_{+}$, representing a centre and a saddle point, respectively.

In the following two theorems we sum up the geometric description of critical curves of $\mathbf{\Theta}_\mu$  depending on the relation between the constant sectional curvature $\rho$, the positive energy index $\mu$ and the value $d_*$ defined  after equation \eqref{psi(x)}. The first result discusses the case $\rho\leq 4\mu^2$. As we will see, all shapes are of Lilienthal's type: see figure    \ref{curvass21}.

\begin{theorem}\label{curvesS21} $($Case $\rho\leq 4 \mu^2)$ The critical curves of $\mathbf{\Theta}_\mu$ in $\mathbb{S}^2(\rho)$ for $\rho\leq 4 \mu^2$ represent a one-parameter family depending on the constant of integration $d>0$ in \eqref{F(x,y)}.
\begin{enumerate}
\item Case $d\leq \rho+\mu^2$. The curves are of arch type.
\item Case $d\in\left(\rho+\mu^2, d_*\right)$. The curves are of fishtail type.
\item Case $d= d_*$. The curves are of deltoid type.
\item Case $d> d_*$. The curves are of bridge type. If $d\in\left(d_*,\mu^2{\rm e}^2\right)$, the minimum of the cable is located in the upper halfsphere; if $d=\mu^2{\rm e}^2$, the minimum is the point where the equator $\beta$ intersects the symmetry axis; and, finally, if $d>\mu^2{\rm e}^2$, the minimum is located in the lower halfsphere, what means that the curves meet  twice the equator $\beta$. 
\end{enumerate}
\end{theorem}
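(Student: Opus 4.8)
The plan is to exploit the explicit parametrization \eqref{gg} together with the behaviour of the two building-block functions, $g(x)=(1-\log x)x$ (the first coordinate of $\phi$) and $\psi(x)$ from \eqref{psi(x)}, and to split into the four cases according to where $x_0$ (the maximal value of $x$ on the orbit) sits relative to the thresholds $1$, $\sqrt[\mu]{\mathrm e}$, and the critical value $d_*$. First I would locate $x_0$: since we are in the regime $\rho\le 4\mu^2$ there are either no singular points ($\rho<4\mu^2$) or a single degenerate one ($\rho=4\mu^2$), so by the phase-plane analysis of Section \ref{sec5} every orbit $F(x,y)=d$ cuts the $x$-axis and $x_0$ is determined by $F(x_0,0)=d$, i.e. $\mu^2x_0^2+\rho(1-\log x_0)^2x_0^2=d$. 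I would check that $F(x,0)$ is monotone increasing in $x$ on the relevant range (this is where $\rho\le 4\mu^2$ enters, via \eqref{cp}: no interior critical point of $F(\cdot,0)$ survives), so $x_0$ is a single well-defined value and the curve is traced by letting $x$ run from $0$ (or the relevant turning value) up to $x_0$ and back, the two halves being mirror images by Proposition \ref{sym}.

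Next I would translate the three thresholds into conditions on $d$ by evaluating $F(x,0)$ at the distinguished abscissae. The value $x=1$ (where $g(x)$ has its maximum and the ``peaks'' occur, by Proposition \ref{cuts}) gives $F(1,0)=\mu^2+\rho=\rho+\mu^2$, which is the case-(1)/(2) boundary; the deltoid threshold $d=d_*$ is by definition the unique $d$ for which $\lim_{x\to 0}\psi(x;d)=0$, i.e. the self-intersection on $\beta$ migrates exactly to the $\alpha\cap\beta$ point; and the value $x=\mathrm e$ (where $g(\mathrm e)=0$, so $\gamma$ meets $\beta$ at a regular point) gives $F(\mathrm e,0)=\mu^2\mathrm e^2$, which separates the three sub-cases of bridge type. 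The shape in each case then follows from the monotonicity facts already recorded after \eqref{psi(x)}: $\psi$ increases on $(0,1)$ and decreases on $(1,\infty)$, while $g(x)=(1-\log x)x$ increases on $(0,1)$ and decreases on $(1,\infty)$ with a sign change at $x=\mathrm e$. Concretely, if $x_0\le 1$ both coordinates are monotone, giving a graph (arch type); once $x_0>1$ the curve develops two peaks at $x=1$ and the sign of $\lim_{x\to 0}\psi(x;d)$ decides whether a self-intersection appears off $\beta$ (fishtail), on $\beta$ (deltoid), or not at all (bridge).

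For the bridge sub-cases I would compute the unit tangent in arc-length, using $y=x_s$ and \eqref{F(x,y)}, to get the direction of $T$ and hence the angle at which $\gamma$ crosses $\beta$; the crossing is governed by the value of $x$ there (a regular crossing requiring $x=\mathrm e$), and comparing $x_0$ with $\mathrm e$ tells whether the cable's minimum lies on the upper halfsphere, on the equator, or on the lower halfsphere. This is almost verbatim the Euclidean computation in Theorem \ref{curvesR2}, now carried out with the spherical metric folded into $F$, so I expect it to go through with only the replacement $\rho=0\rightsquigarrow\rho>0$ and the constant $1\rightsquigarrow\rho+\mu^2$.

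The main obstacle I anticipate is not any single computation but the case $d=d_*$ and the limits $x\to 0$: one must justify that $\lim_{x\to 0}\psi(x;d)$ exists, is finite, and depends monotonically on $d$, so that $d_*$ is genuinely unique and the curve really does close up / meet $\beta$ orthogonally there rather than spiralling. The integrand in \eqref{psi(x)} carries a factor $r\log r$ against a square-root singularity as $r\to 0$ and as $r\to x_0$, and the delicate point is that near $x_0$ the factor $\sqrt{d-\mu^2r^2-\rho(1-\log r)^2r^2}$ vanishes; I would handle the endpoint integrability by the standard turning-point argument (the simple zero of the radicand gives an integrable $1/\sqrt{\cdot}$ singularity, so $\psi$ and its endpoint limits are finite), and the orthogonality at $\beta$ by reading off $\cos\theta$ from the tangent-vector formula, exactly as in the Euclidean proof. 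Everything else is bookkeeping of the signs of $g'$ and $\psi'$ across $x=1$ and $x=\mathrm e$.
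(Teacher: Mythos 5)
Your proposal is correct and follows essentially the same route as the paper's proof: monotonicity of $F(x,0)$ under $\rho\le 4\mu^2$ giving a unique turning value $x_0$, the symmetry reduction to $x\in(0,x_0)$, and the case split at $d=\rho+\mu^2$ (i.e.\ $x_0=1$), $d=d_*$, and $d=\mu^2{\rm e}^2$ (i.e.\ $x_0={\rm e}$) via the monotonicity of $\psi(x)$ and $(1-\log x)x$. Your added attention to the integrability of $\psi$ at the turning point and to the existence/uniqueness of $d_*$ addresses points the paper takes for granted (and the stray threshold ``$\sqrt[\mu]{{\rm e}}$'' in your first paragraph is evidently a slip for $x={\rm e}$, which you use correctly afterwards).
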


\begin{proof} Let $\gamma\subset\mathbb{S}^2(\rho)$ be a critical curve of $\mathbf{\Theta}_\mu$ for $\mu>0$ such that $\rho\leq 4 \mu^2$.   Now we have   $F(x,0)=x^2(\mu^2+\rho(1-\log x)^2)$. Then the derivative of $F(x,0)$ with respect to $x$ is $2x(\mu^2-\rho\log x(1-\log x))$ and it is positive because  $\rho\leq 4 \mu^2$. This implies that $F(x,0)$  monotonically increases from $0$ to $+\infty$.  Thus,  for any fixed $d>0$, the associated orbit $F(x,y)=d$   always intersects the $x$-axis of the phase plane at exactly one point  $x_0$, which is the maximum value of $x$ in that orbit.

As in the Euclidean case, we are going to argue only for values $x\in\left(0,x_0\right)$, since the other half part of $\gamma$  can be completed by symmetry. We have the following cases:
\begin{enumerate}
\item Case $x_0\leq 1$. Then necessarily $d\leq \rho+\mu^2$. The function $\psi(x)$   monotonically increases until $x=x_0$, where $\psi(x_0)=0$ and $\gamma$ meets the symmetry axis. Moreover, the function $(1-\log x)x$ also increases and consequently,  $\gamma$ is a curve of arch type.
\item Case $x_0>1$. Then $d>\rho+\mu^2$ and we have different cases. If $d\in\left(\rho+\mu^2,d_*\right)$ then $\lim_{x\rightarrow 0} \psi(x)<0$ and $\psi(x)$ increases until $x= 1$. Moreover, since $\lim_{x\rightarrow 1}\psi(x)>0$,   there exists a self-intersection point for some $x\in\left(0,1\right)$. For $x>1$, the function $\psi(x)$   decreases until reaching the value $x_0$, where $\psi(x_0)=0$. At the same time, the function $(1-\log x)x$ increases if $x\in\left(0,1\right)$ and decreases for $x\in\left(1,x_0\right)$ giving rise to critical curves of fishtail type.
\item Case $d=d_*$. By definition of $d_*$,  we have that $\lim_{x\rightarrow 0}\psi(x)=0$ and arguing as above we conclude that $\gamma$ is a curve of deltoid type.
\item Case $d>d_*$. Then $\lim_{x\rightarrow 0}\psi(x)>0$ and $\psi(x)$ increases for $x\in\left(0,1\right)$. For these values of $x$, the function $(1-\log x)x$ also increases. However, in the interval $(1,x_0)$, the functions $\psi(x)$ and $(1-\log x)x$ decrease. Exactly, at $x=x_0$, the function $\psi(x)$ vanishes which means that $\gamma$ meets the geodesic $\alpha$. Moreover, if $d<\mu^2{\rm e}^2$, this contact point is located in the upper halfsphere; if $d=\mu^2{\rm e}^2$, $\gamma(x_0)$ is the intersection point between $\alpha$ and $\beta$, and;  for $d>\mu^2{\rm e}^2$, the point $\gamma(x_0)$ is located in the lower halfsphere. Thus there appear the   three different possibilities of bridge type curves.\end{enumerate}\end{proof}

\begin{figure}[hbtp]
	\begin{centering}{\includegraphics[width=2.45cm,height=2.45cm]{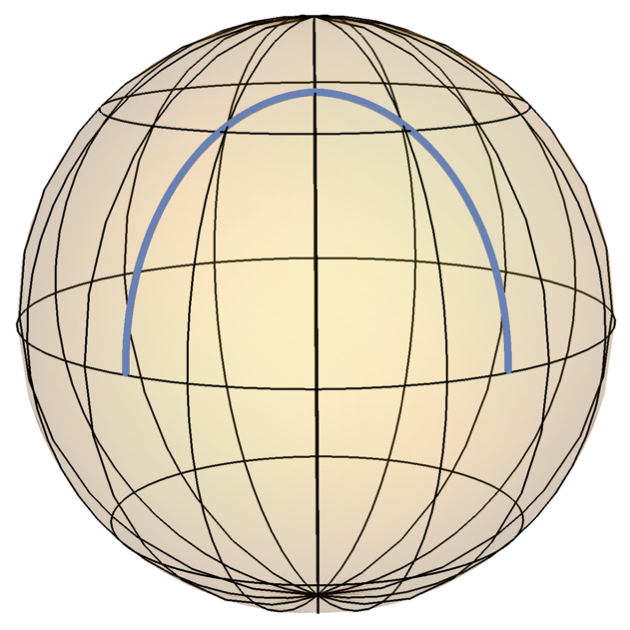}
	\includegraphics[width=0.15\textwidth]{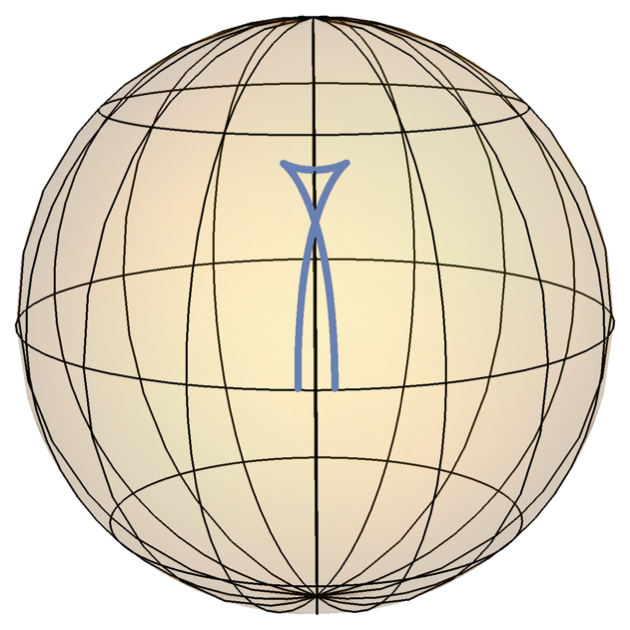}
	\includegraphics[width=0.15\textwidth]{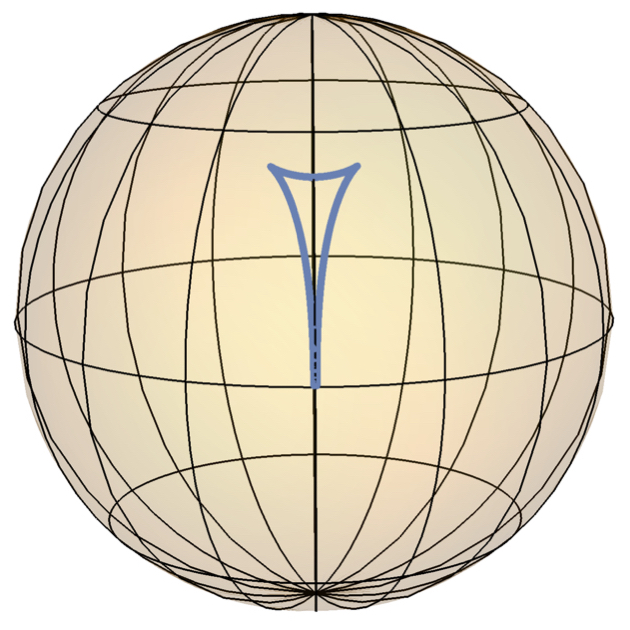}
	\includegraphics[width=0.15\textwidth]{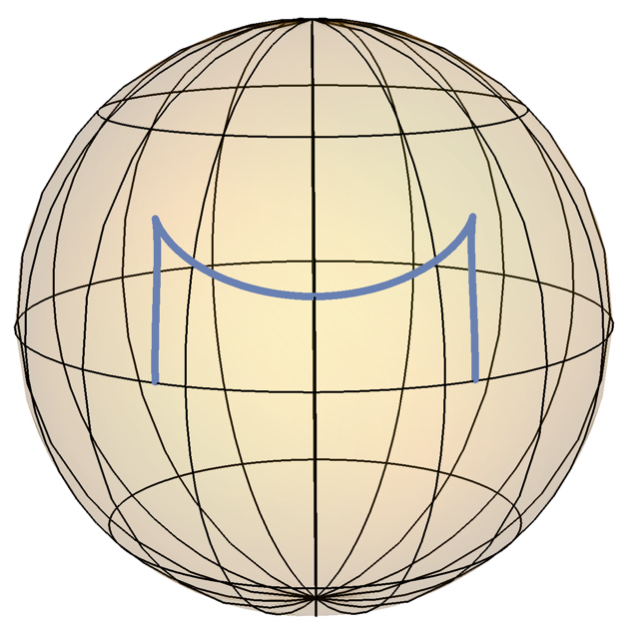}
	\includegraphics[width=0.15\textwidth]{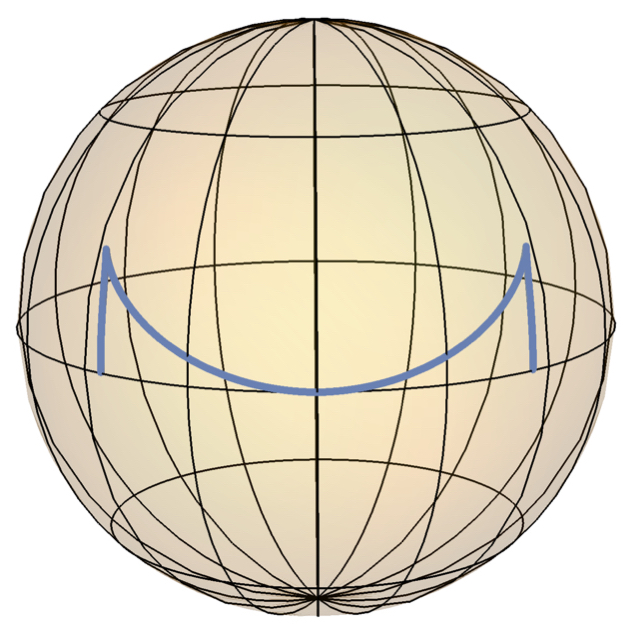}
	\includegraphics[width=0.15\textwidth]{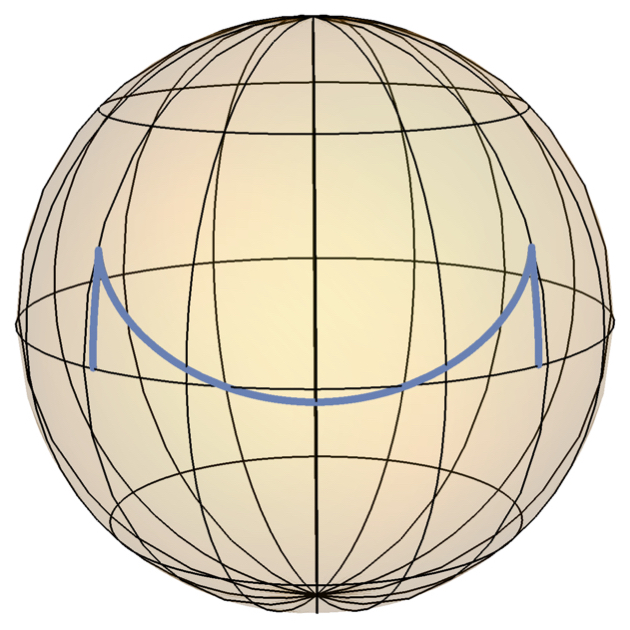}}
		\caption{Critical curves of $\mathbf{\Theta}_\mu$ in $\mathbb{S}^2(\rho)$ for $\rho<4\mu^2$. Here $\rho=\mu=1$. From left to right: arch type ($d=1.3$), fishtail type ($d=2.5$), deltoid type ($d=d_*\simeq 2.64$) and three different cases of bridge type ($d=4.5$, $d=\mu^2{\rm e}^2$ and $d=8$, respectively)}
		\label{curvass21}
	\end{centering}
\end{figure}

 We now study the case   $\rho>4\mu^2$, which has not a counterpart in Euclidean plane, being some of these  shapes of non-Lilienthal's type curves: see figure \ref{curvass22especial}.

\begin{theorem}\label{curvesS22} $($Case $\rho>4\mu^2)$ The critical curves of $\mathbf{\Theta}_\mu$ in $\mathbb{S}^2(\rho)$ for $\rho >4\mu^2$ are of arch type, fishtail type, deltoid type, bridge type, cross type, anti-deltoid type, anti-fishtail type, anti-arch type and anti-bridge type. Moreover, we also have braid type curves whenever $d\in\left(\rho x_{+}^2\log x_{-},\rho x_{-}^2\log x_{+}\right)$.
\end{theorem}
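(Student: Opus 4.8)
The plan is to argue exactly as in the proofs of Theorems \ref{curvesR2} and \ref{curvesS21}, analyzing the orbits $F(x,y)=d$ of the system \eqref{phase} in the phase portrait, but now exploiting the richer structure coming from the two singular points. Recall from Section \ref{sec5} that for $\rho>4\mu^{2}$ there is a center $P_{+}=(x_{+},0)$ and a saddle $P_{-}=(x_{-},0)$ with $1<x_{-}<x_{+}<{\rm e}$, and that along any critical curve the angular function $\psi(x)$ in \eqref{psi(x)} increases on $(0,1)$ and decreases on $(1,\infty)$, while the first coordinate $(1-\log x)x$ of the parametrization \eqref{gg} attains its maximum at the peak $x=1$, vanishes at $x={\rm e}$, and is negative for $x>{\rm e}$. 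The qualitative type of a curve is then read off from the interval of $x$ swept by its orbit, the sign of $\lim_{x\to 0}\psi$ (governed by $d$ versus $d_{*}$), the number of peaks $x=1$ and regular crossings $x={\rm e}$ it meets, and the total winding of the spherical angle $\sqrt{\rho d}\,\psi$ around the pole. The essential new feature is that $F(x,0)$ is no longer monotone: it increases on $(0,x_{-})$ to the local maximum $M=\rho x_{-}^{2}\log x_{+}$, decreases on $(x_{-},x_{+})$ to the local minimum $m=\rho x_{+}^{2}\log x_{-}$, and increases again for $x>x_{+}$, so the level $F(x,y)=d$ may cut the $x$-axis in one or three points. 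These cut patterns, together with the landmark values $\rho+\mu^{2}=F(1,0)$, $d_{*}$, $\mu^{2}{\rm e}^{2}=F({\rm e},0)$, $m$ and $M$, partition the admissible range of $d$ into the regimes producing the listed shapes.

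The Lilienthal's type curves are produced by the orbit branch emanating from the singular limit $x\to 0$, exactly as before. When this branch stays in $x<1$ (which occurs for $d\leq\rho+\mu^{2}$) both $\psi$ and $(1-\log x)x$ increase up to the turning point, giving an \emph{arch}; when the branch reaches the peak $x=1$, the sign of $\lim_{x\to 0}\psi$ relative to $d_{*}$ reproduces the \emph{fishtail}--\emph{deltoid}--\emph{bridge} trichotomy of Theorem \ref{curvesS21}, the bridge sub-cases being detected, as there, by the sign of $(1-\log x)x$ at the outer turning point, equivalently by whether the orbit reaches $x={\rm e}$. Thus all four Lilienthal's type shapes persist for $\rho>4\mu^{2}$.

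The genuinely spherical shapes come from two new mechanisms. First, for $d\in(m,M)=(\rho x_{+}^{2}\log x_{-},\rho x_{-}^{2}\log x_{+})$ the level set has a bounded component whose two turning points lie in $(x_{-},\infty)$ and which encircles the center $P_{+}$; since this loop never meets the peak line $x=1$, it is a periodic orbit and yields a curve of \emph{braid} type with non-constant periodic curvature, closing up precisely when the integral in \eqref{I(d)} is a rational multiple of $2\pi$ by Proposition \ref{closed} (Euclidean circles appearing as the constant-curvature limit). Second, the \emph{anti-deltoid}, \emph{anti-fishtail}, \emph{anti-arch}, \emph{anti-bridge} and \emph{cross} types arise when the outer turning point of an orbit exceeds ${\rm e}$: then the curve crosses $\beta$ at $x={\rm e}$ and passes to the region $(1-\log x)x<0$ on the far side of $\beta$, where $\psi$ keeps varying so that $\sqrt{\rho d}\,\psi$ accumulates an extra turn around the north pole. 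Matching the resulting windings, peaks and crossings against the plain shapes yields the five anti-/cross types, and since the statement only asserts existence of each type, it suffices to exhibit a representative value of $d$ in each regime (the symmetries of Propositions \ref{sym} and \ref{cuts} reducing the work to the half-orbit).

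I expect the main obstacle to be the exhaustive bookkeeping in this last step. For each admissible $d$ one must count the peaks at $x=1$, the regular crossings at $x={\rm e}$, the sign of $\lim_{x\to 0}\psi$, and, crucially, the winding number of $\sqrt{\rho d}\,\psi$ around the pole in the range $x>{\rm e}$, and then certify that these data match the intended shape; distinguishing each anti- curve from its plain counterpart is the delicate point, since it rests on an estimate guaranteeing the extra half- or full-turn of the spherical angle. A further complication, absent in Theorems \ref{curvesR2} and \ref{curvesS21}, is that the new separatrix levels $m$ and $M$ interleave with the old landmarks $\rho+\mu^{2}$, $d_{*}$ and $\mu^{2}{\rm e}^{2}$ in a way that depends on $\rho$ and $\mu$, so one must also treat the threshold and separatrix values $d\in\{m,M,d_{*},\mu^{2}{\rm e}^{2}\}$, where the type degenerates or transitions, as limiting cases.
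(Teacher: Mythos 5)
Your phase--plane skeleton coincides with the paper's: the non-monotonicity of $F(x,0)$, with local maximum $M=\rho x_{-}^{2}\log x_{+}$ at the saddle abscissa $x_{-}$ and local minimum $m=\rho x_{+}^{2}\log x_{-}$ at the center abscissa $x_{+}$, the resulting one-cut/three-cut dichotomy for the levels $F(x,y)=d$, the braid type curves coming from the closed component encircling $P_{+}$ (which stays in $x>1$ and is periodic by Proposition \ref{closed}), and the four Lilienthal shapes read off the branch emanating from $x\to 0$ exactly as in Theorems \ref{curvesR2} and \ref{curvesS21} --- all of this is the paper's argument.

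The genuine gap is your mechanism for the anti- and cross-type curves. You attribute the extra winding around the pole to the portion of the curve with $x>{\rm e}$, i.e.\ to what happens after the curve crosses $\beta$ into the region where $(1-\log x)x<0$. That cannot be the source: the contribution of the interval $({\rm e},x_{0})$ to the integral \eqref{psi(x)} is bounded (the only singularity there is the integrable square-root one at the turning point $x_{0}$), and, decisively, in the case $\rho\le 4\mu^{2}$ every orbit with $d>\mu^{2}{\rm e}^{2}$ also crosses into $x>{\rm e}$, yet Theorem \ref{curvesS21} classifies all of those curves as plain bridge type; if crossing $\beta$ produced an extra turn, anti-type curves would exist for $\rho\le 4\mu^{2}$ as well, contradicting that theorem. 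The mechanism the paper uses is instead the slow passage near the saddle $P_{-}$, which is precisely the feature that exists only when $\rho>4\mu^{2}$: for $d>M$ close to $M$, the factor $\sqrt{d-\mu^{2}r^{2}-\rho(1-\log r)^{2}r^{2}}=\sqrt{d-F(r,0)}$ in the denominator of \eqref{psi(x)} is nearly zero near $r=x_{-}$ (a non-degenerate local maximum of $F(\cdot,0)$), so the angular amplitude $\lvert\lim_{x\to 0}\psi(x)\rvert$ diverges as $d\downarrow M$. Since $x_{-}<{\rm e}$, this winding accumulates \emph{before} the curve ever reaches $\beta$; indeed the anti-type curves need not meet $x={\rm e}$ at all. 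The succession anti-bridge, anti-arch, anti-fishtail, anti-deltoid, cross is then produced by letting $d$ decrease towards $M$, the shape changing as the growing amplitude passes the successive thresholds (peaks touching, towers self-intersecting, intersection at $\alpha\cap\beta$, further turns). Replacing your ``beyond $\beta$'' argument by this near-saddle divergence is what is needed to make the proof work.
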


\begin{proof} Assume that $\gamma$ is a critical curve of $\mathbf{\Theta}_\mu$ in $\mathbb{S}^2(\rho)$ for $\rho>4\mu^2>0$. Arguing as in the beginning of the proof of Theorem \ref{curvesS21}, in this case there are two values $x_{-}$ and $x_{+}$ with $x_{-}<x_{+}$, such that the function $F(x,0)$ increases if $x\in\left(0,x_{-}\right)$, then decreases if $x\in (x_{-},x_{+})$ and  finally increases if  $x>x_{+}$. Exactly, $x_{-}$ and $x_{+}$ are the values that appeared in the analysis of the phase plane in Section \ref{sec5}. Therefore, for each $d>0$,   the associated orbit $F(x,y)=d$   meets either once or thrice the $x$-axis (see Proposition \ref{closed} for details). 

Suppose that the orbit $F(x,y)=d$ has only one intersection point with the $x$-axis at   $x=x_0$. As   before, we observe  that $x_0$ represents the maximum value of $x$ in that orbit. This happens precisely for any $d\in\left(0,\rho x_{+}^2\log x_{-}\right)\cup\left(\rho x_{-}^2\log x_{+}, \infty\right)$. In this setting we have the same type of critical curves of Theorem \ref{curvesS21}, that is, arch type, fishtail type, deltoid type and bridge type curves.  Moreover, whenever $d$ is close enough to $\rho x_{-}^2\log x_{+}$, some different kind of critical curves may also appear for some prescribed values of $\mu$. We consider that $\lvert\lim_{x\rightarrow 0} \psi(x)\rvert$ is bigger than $\pi/4$. This means that the critical curve $\gamma$ begins and ends in the geodesic $\beta$ at a distance bigger than a half round. What is more, the value $\lvert\lim_{x\rightarrow 0}\psi(x)\rvert$ grows as $d$ decrease. Then, if the points where $\gamma$ is not defined (the vertices appearing with $x=1$) are far from each other we have, by a similar argument as in Theorem \ref{curvesS21}, anti-bridge type curves. Now, if the points corresponding with $x=1$ touch themselves we are dealing with anti-arch type curves. Then, for smaller values of $d$, the endpoints of $\gamma$ are close enough (after one turn in the sphere) so that the towers have a self-intersection point, that is, $\gamma$ is of anti-fishtail type. Following with this argument, there may exist a value of $d$ where the towers meet precisely at the intersection point between $\alpha$ and $\beta$. This case is of anti-deltoid type. Finally, for smaller values of $d$ the distance between the endpoints grows (linearly measured) and the critical curve may give as many turns around the north pole as needed. These curves have at least one self-intersection point in the symmetry axis $\alpha$ so they are of cross type.

The other possibility is that   the orbit has three points of intersection with the $x$-axis, that is, when $d\in\left(\rho x_{+}^2\log x_{-},\rho x_{-}^2\log x_{+}\right)$. If $x_0$ denotes the first one and, whenever $x<x_0$ we can argue as in the proof of Theorem \ref{curvesS21} obtaining that this part of $\gamma$ has been previously described. However, for these values of $d$ we also have another two intersection points which appear around the critical point $x_{+}$. Moreover, since $P_{+}=(x_{+},0)$ represents a centre, we have that the associated orbits are closed (Proposition \ref{closed}). Thus, the curve $\gamma$   has periodic curvature and $\gamma$  goes around the north pole being a curve of braid type. In general, the curve $\gamma$ is not   closed because it is necessary that the integral  in \eqref{I(d)} is a multiple of $2\pi$. 
\end{proof}

\begin{figure}[hbtp]
	\begin{centering}{
	\includegraphics[width=0.15\textwidth]{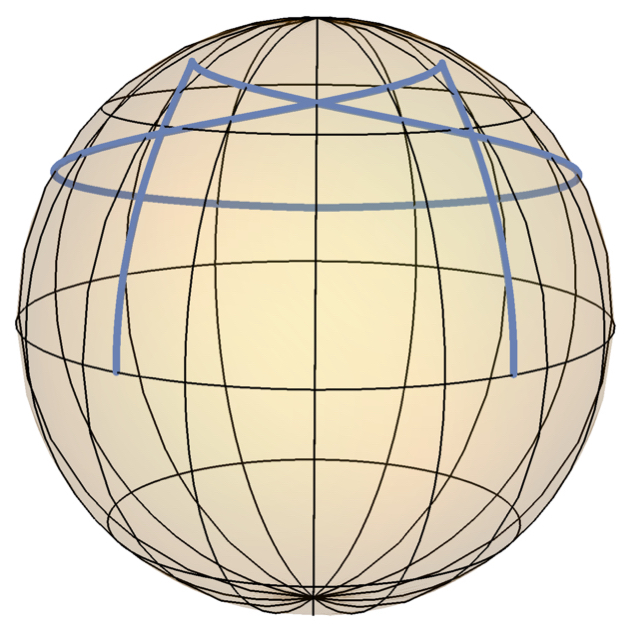}
	\includegraphics[width=0.15\textwidth]{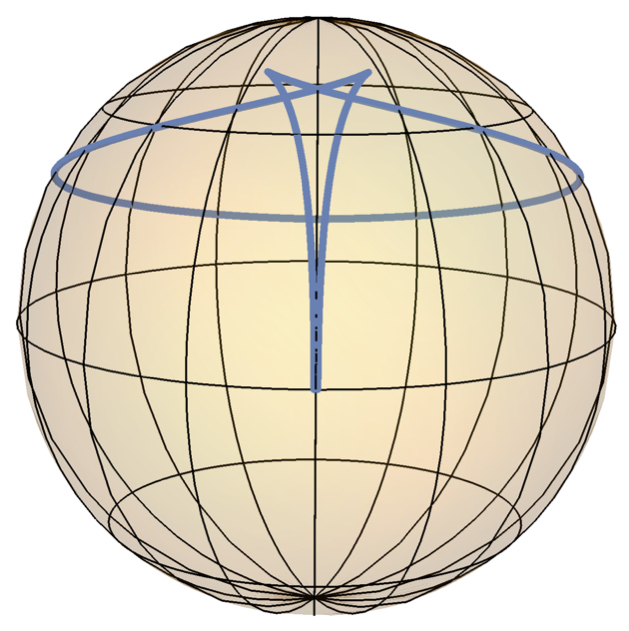}
	\includegraphics[width=0.15\textwidth]{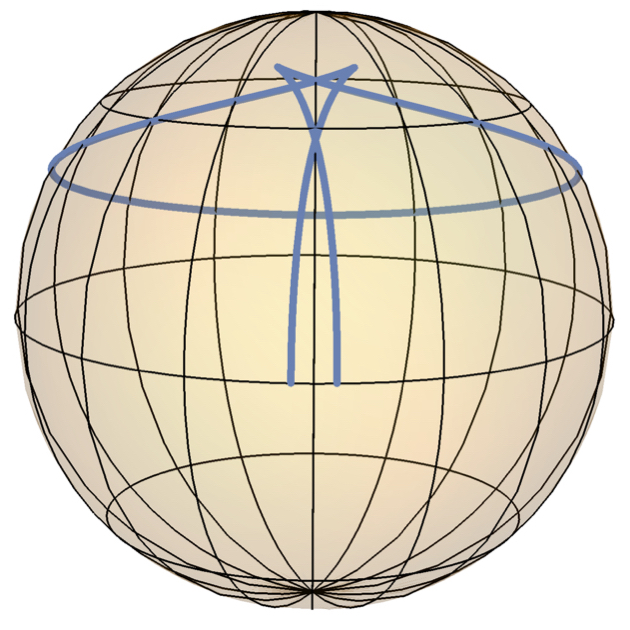}
	\includegraphics[width=0.15\textwidth]{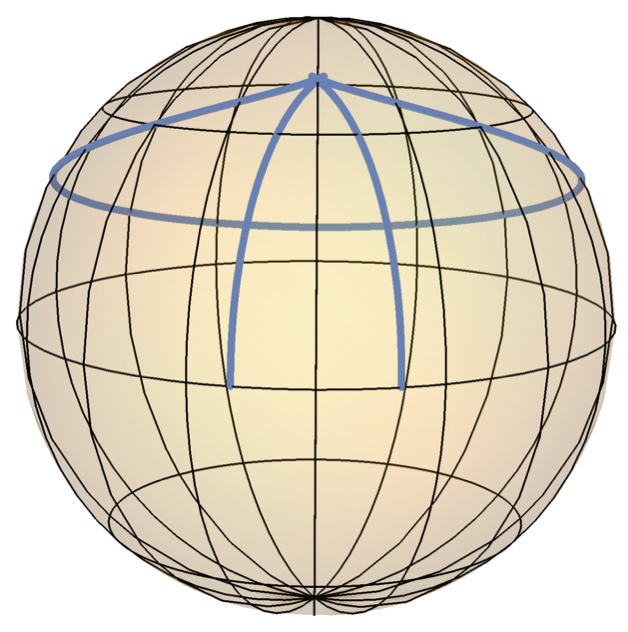}
	\includegraphics[width=0.15\textwidth]{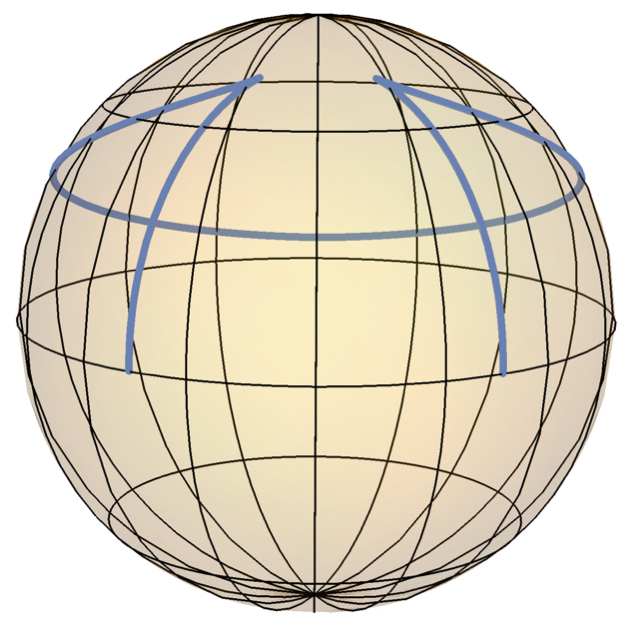}
	\includegraphics[width=0.15\textwidth]{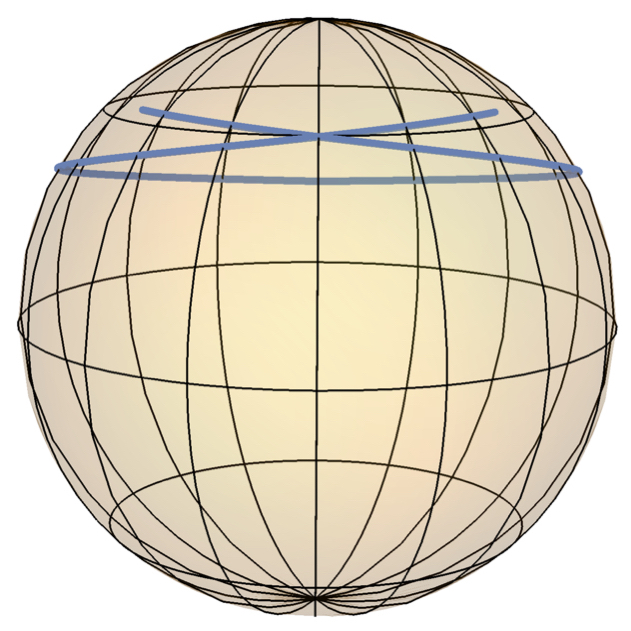}}
		\caption{Non-Lilienthal's type critical curves of $\mathbf{\Theta}_\mu$ in $\mathbb{S}^2(\rho)$   for $\rho> 4\mu^2$. Here $\rho=1$ and $\mu=0.45$. From left to right: cross type ($d=1.29$), anti-deltoid type ($d\simeq 1.32$), anti-fishtail type ($d=1.33$), anti-arch type ($d\simeq 1.35$), anti-bridge type ($d=1.4$) and one period of the curvature of a braid type ($d=1.23$)}
		\label{curvass22especial}
	\end{centering}
\end{figure}

\subsection{The hyperbolic plane $\mathbb{H}^2(\rho)$}

Recall that the classification of critical curves for $\mathbf{\Theta}_\mu$ in $\mathbb{H}^2(\rho)$ is done when the constant $d$ in \eqref{F(x,y)} is positive, that is,  the surfaces generated in   $\mathbb{H}^3(\rho)$ are of spherical type. We know from Section \ref{sec5} that in the hyperbolic plane $\mathbb{H}^2(\rho)$ there are two singular points   $P_{+}$ and $P_{-}$ in the phase plane,  $x_{+}<1<x_{-}$, representing a centre and an unstable saddle point, respectively: see figure \ref{orbitasr2}, right. The classification of the extremal curves is the following: see figures \ref{curvash21} and \ref{curvash22}.

\begin{theorem}\label{curvesH2}
Let $d>0$.  The critical curves of $\mathbf{\Theta}_\mu$ in $\mathbb{H}^2(\rho)$ for $d\in\left(0,\rho x_{-}^2\log x_{+}\right)$ are arch type curves, fishtail type curves, deltoid type curves, bridge type curves and hypercycle type curves. Moreover, whenever $d>\rho x_{-}^2\log x_{+}$ we also have anchor type curves. 
\end{theorem}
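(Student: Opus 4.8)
The plan is to run the same machine as in the Euclidean and spherical cases (Theorems \ref{curvesR2}, \ref{curvesS21} and \ref{curvesS22}): every critical curve is given by the parametrization \eqref{gg}, so its global shape is fixed by three pieces of data already at hand, namely the range of $x$ swept by the orbit $F(x,y)=d$, the monotonicity of the first coordinate $(1-\log x)x$ (increasing on $(0,1)$ up to its maximum $1$ at the peak $x=1$, decreasing afterwards and vanishing at $x={\rm e}$, where $\gamma$ meets $\beta$), and the monotonicity and limits of $\psi(x)$ from \eqref{psi(x)} (increasing on $(0,1)$, decreasing on $(1,\infty)$). Since these ingredients are common to all space forms, the whole problem reduces to the profile of $F(x,0)=\mu^{2}x^{2}+\rho(1-\log x)^{2}x^{2}$, which for $\rho<0$ differs markedly from the previous cases.

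First I would fix the shape of $F(\cdot,0)$. Its critical points are the solutions $x_{+},x_{-}$ of \eqref{cp}, and evaluating $F$ there through the identity $F(x_c,0)=\rho x_c^{2}(1-\log x_c)$ together with $\log x_{+}+\log x_{-}=1$ (read off from \eqref{slog}) gives a negative local minimum $F(x_{+},0)=\rho x_{+}^{2}\log x_{-}$ and a positive local maximum $F(x_{-},0)=\rho x_{-}^{2}\log x_{+}$, in the ordering $x_{+}<1<{\rm e}<x_{-}$. Since $F(0^{+},0)=0$ and $F(x,0)\to-\infty$ as $x\to\infty$, the graph of $F(\cdot,0)$ dips to its negative minimum at $x_{+}$, climbs to its positive maximum $\rho x_{-}^{2}\log x_{+}$ at $x_{-}$, and then falls to $-\infty$. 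The single threshold $d=\rho x_{-}^{2}\log x_{+}$ of the statement is exactly the height of this maximum.

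For $0<d<\rho x_{-}^{2}\log x_{+}$ the level $d$ meets the graph in two abscissae $x_{1}\in(x_{+},x_{-})$ and $x_{2}\in(x_{-},\infty)$, so the orbit has a bounded component over $(0,x_{1}]$ and an unbounded one over $[x_{2},\infty)$. On the bounded component $x_{1}$ plays the role of the maximal abscissa $x_{0}$ of Theorem \ref{curvesS21}, and the very same discussion --- comparing $d$ with $\mu^{2}+\rho=F(1,0)$, with $d_{*}$, and with $\mu^{2}{\rm e}^{2}=F({\rm e},0)$, while tracking the sign of $\lim_{x\to0}\psi(x)$ and the peaks $x=1$ --- reproduces the arch, fishtail, deltoid and bridge types. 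On the unbounded component one has $x\ge x_{2}>{\rm e}$, hence $\kappa>0$ and no peaks; the first coordinate is negative with least modulus at the turning point $x_{2}$, while the integrand in \eqref{psi(x)} is $O\!\left(x^{-2}(\log x)^{-2}\right)$ at infinity, so $\psi(x)$ has a finite limit. This yields a concave graph over the whole of $\beta$ attaining its minimum distance on $\alpha$, that is, a hypercycle type curve; the constant-curvature hypercycle of Corollary \ref{ccc} is recovered as $d\to(\rho x_{-}^{2}\log x_{+})^{-}$, when $x_{2}$ collapses onto the saddle $P_{-}$.

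Finally, for $d>\rho x_{-}^{2}\log x_{+}$ the level $d$ lies above the global maximum of $F(\cdot,0)$, so $F(x,0)<d$ for every $x>0$; then $y^{2}=\bigl(d-F(x,0)\bigr)/\log^{2}x$ never vanishes, $x$ has no critical points, and the orbit splits at the peak $x=1$ into two arcs, which assemble into the two symmetric branches of an anchor type curve --- the only shape with $x$ monotone. I expect the real difficulty to be not the orbit count, which is immediate once the profile of $F(\cdot,0)$ is known, but the translation of this phase-plane data into the global geometry of $\gamma$ in $\mathbb{H}^{2}(\rho)$: verifying that the hypercycle branch is a single-valued complete graph over $\beta$, and that each anchor branch starts on $\beta$, crosses it once more, and then tends to $\alpha$. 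Controlling these crossings together with the limits of $\psi$ as $x\to0$ and $x\to\infty$ is where the careful work lies, the bifurcation count being otherwise routine.
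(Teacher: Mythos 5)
Your bifurcation analysis is exactly the paper's: both proofs reduce everything to the profile of $F(x,0)$, and your computation of it (negative local minimum $\rho x_{+}^{2}\log x_{-}$ at $x_{+}$, positive local maximum $\rho x_{-}^{2}\log x_{+}$ at $x_{-}$, with $x_{+}<1<{\rm e}<x_{-}$, via \eqref{cp} and $\log x_{+}+\log x_{-}=1$) is correct and in fact more explicit than the paper's. The orbit count (two intersection points with the $x$-axis for $0<d<\rho x_{-}^{2}\log x_{+}$, hence two connected components; none for $d$ above the threshold), the re-use of the Theorem \ref{curvesS21} discussion on the bounded component to produce the arch/fishtail/deltoid/bridge types, and the identification of the saddle $P_{-}$ with the constant-curvature hypercycle of Corollary \ref{ccc} all coincide with the paper's argument.

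The gap is in the endgame you explicitly defer (``where the careful work lies''): the paper does carry out that translation, and it is short, using machinery you already have. For the anchor case ($d>\rho x_{-}^{2}\log x_{+}$) the paper argues: $\psi$ and $(1-\log x)x$ increase on $(0,1)$ and decrease on $(1,\infty)$; each branch crosses $\beta$ exactly at $x={\rm e}$ (Proposition \ref{cuts}); normalizing $\psi(\infty)=0$ (legitimate by precisely your integrability estimate for \eqref{psi(x)}), one has $\psi(x)>0$ for all $x$, so the branch never meets $\alpha$ yet tends to it as $x\to\infty$; together with Proposition \ref{sym} this yields the two disjoint symmetric components of the anchor. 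You state none of the facts ``$\psi>0$'' and ``$\psi\to0$'', which are what actually pin down the anchor shape, so your anchor case is asserted rather than proved. A smaller point: for the hypercycle component your own (correct) observation that $\psi$ has a \emph{finite} limit as $x\to\infty$ is in tension with your conclusion that the curve is a graph over ``the whole of $\beta$'' --- the projection onto $\beta$, which is parametrized by $\psi$, then covers only a bounded arc. The paper instead infers the hypercycle picture from the monotonicity of $\psi$ and $(1-\log x)x$ and from $(1-\log x)x\to-\infty$ (its phrase ``defined on the entire geodesic $\beta$'' is itself loose, but it is not derived from finiteness of $\psi$); you should either drop the ``whole of $\beta$'' claim or justify in what sense it is meant, since as written your two statements contradict each other.
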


\begin{proof}
Let   $\gamma$ be a critical curve for $\mathbf{\Theta}_\mu$ in $\mathbb{H}^2(\rho)$ where $d>0$ is the constant of integration of \eqref{EL}. As in previous subsections, we take the function $F(x,0)$. Since now $\rho<0$, the function   $F(x,0)$ decreases while $x<x_{+}$. Because $P_{+}=(x_{+},0)$ represents a centre, then $F(x,0)$ reaches a local minimum at $x=x_{+}$.  In the interval  $\left(x_{+},x_{-}\right)$ the function $F(x,0)$ increases from the   number $F(x_{+},0)<0$ to the   $F(x_{-},0)=\rho x_{-}^2\log x_{+}>0$. By the same argument as before, the point  $x_{+}$ represents the point where $F(x,0)$ reaches its local maximum. Finally, for $x>x_{+}$, the function $F(x,0)$ is decreasing.

Therefore,  the orbit $F(x,y)=d$ meets the $x$-axis if and only if $d\leq \rho x_{-}^2\log x_{+}$ and, in this case, the orbit intersects the $x$-axis at exactly two points.   The first intersection point corresponds with a point $x=x_0$, and  $x_0<x_{-}$, while the second one appears for some $x=x_0'$, and $x_{0}'>x_{-}$. Since $P_{-}=\left(x_{-},0\right)$ is a saddle point, we get that the orbit $F(x,y)=d$ has two connected components, each of them corresponding with one intersection point.
 For the connected component corresponding with a point $x_0<x_{-}$, we can argue as in $\mathbb{R}^2$ and $\mathbb{S}^2(\rho)$   obtaining curves of arch type, of fishtail type, of deltoid type and of bridge type (Lilienthal's type curves). 

Let us consider now the connected component corresponding with the cut $x_{0}'>x_{-}$. For this case, we have that the functions $\psi(x)$ and $(1-\log x)x$ always decrease. Moreover,  $\lim_{x\rightarrow\infty}(1-\log x)x=-\infty$, which means that $\gamma$ is defined on the entire geodesic $\beta$. Therefore  $\gamma$ is a curve of hypercycle type.

Finally, we   consider that $d>\rho x_{-}^2\log x_{+}$. In this case, the associated orbits do not meet the $x$-axis and  are defined for all $x>0$ and $x\neq 1$. As $x\rightarrow 0$, the critical curve tends to meet $\beta$. Thus in the interval $x\in(0,1)$, the functions $\psi(x)$ and $(1-\log x)x$ increase. The point where $x=1$ represents a peak and $\gamma$ is not defined there. After that, both functions decrease. Since $\gamma$ is defined in $(0,1)\cup(1,\infty)$, there are points where $\gamma$ intersects $\beta$ (corresponding with $x={\rm e}$, see Proposition \ref{cuts}). Moreover, $\gamma$ never intersects the symmetry axis $\alpha$ since $\psi(x)>0$ for all possible values of $x$. However, $\lim_{x\rightarrow\infty} \psi(x)=0$, that is, $\gamma$ tends to the symmetry axis but it has two disjoint parts.. Thus, we get anchor type curves.
\end{proof}

\begin{figure}[hbtp]
	\begin{centering}{
	\includegraphics[width=0.15\textwidth]{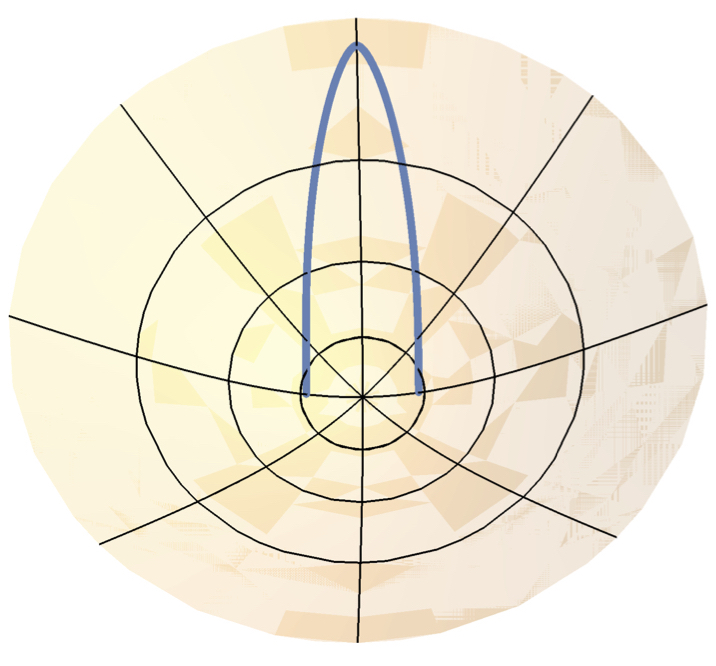}
	\includegraphics[width=0.15\textwidth]{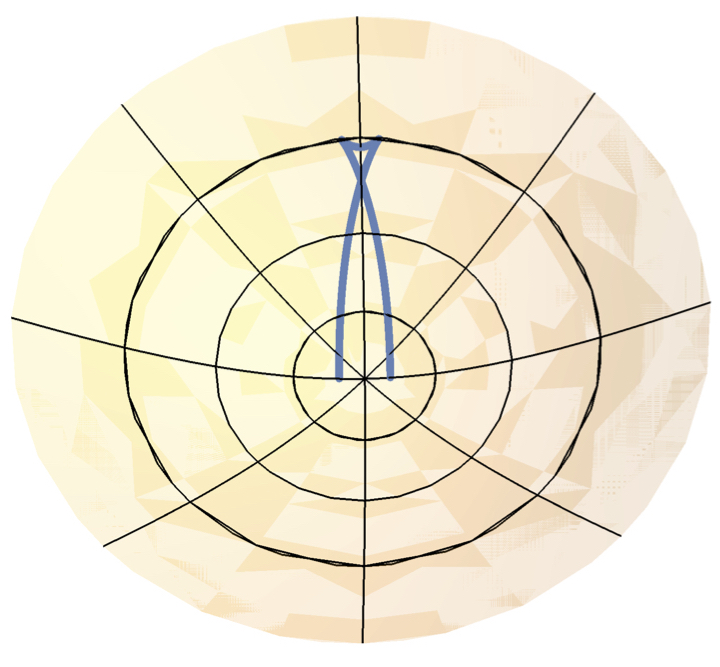}
	\includegraphics[width=0.15\textwidth]{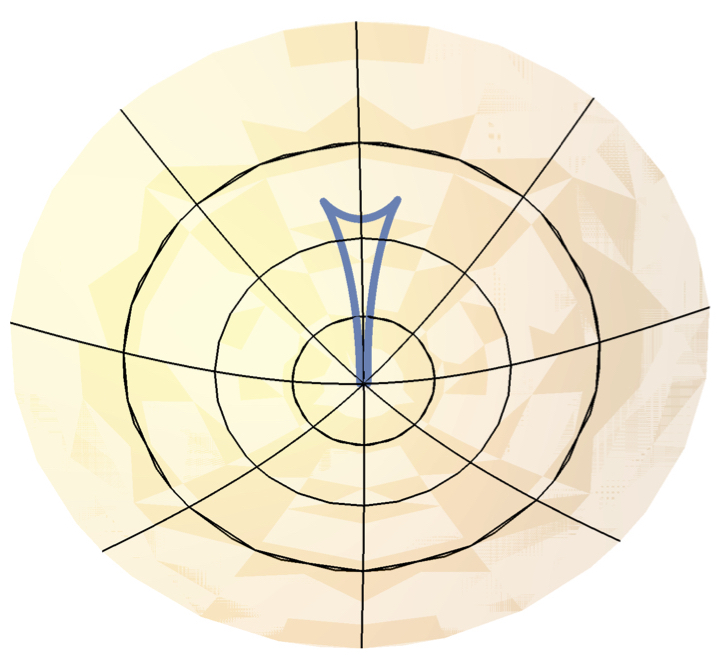}
	\includegraphics[width=0.15\textwidth]{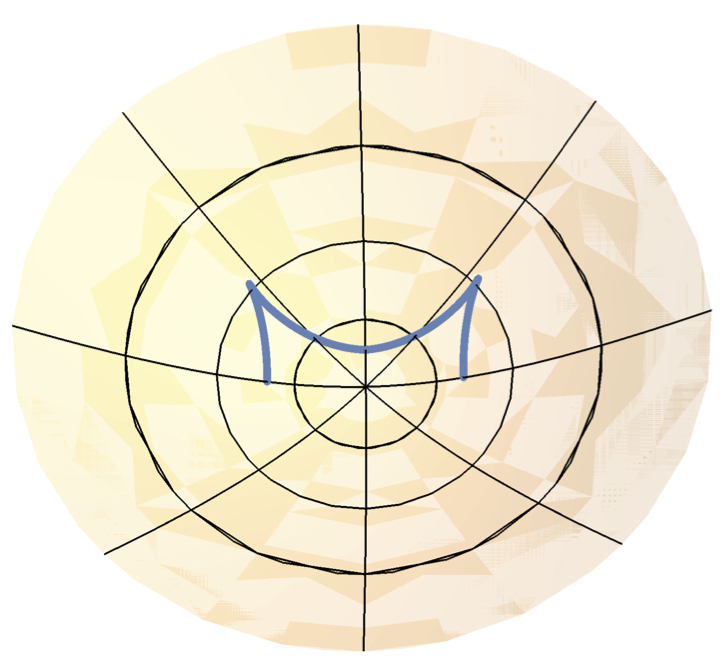}
	\includegraphics[width=0.15\textwidth]{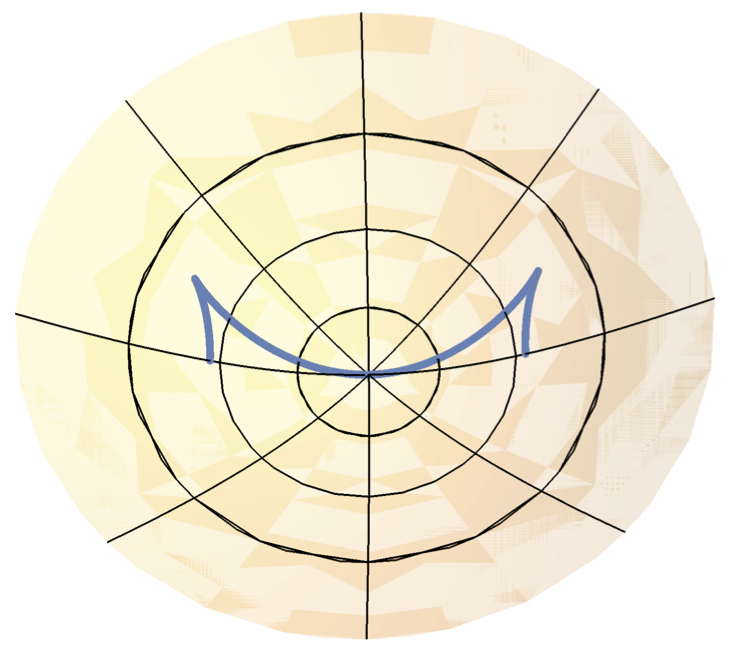}
	\includegraphics[width=0.15\textwidth]{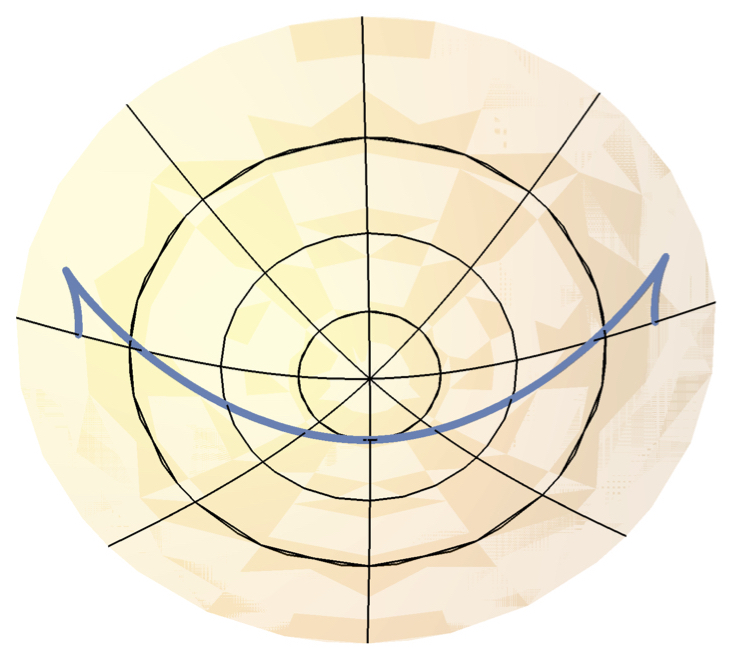}}
		\caption{Lilienthal's type critical curves of $\mathbf{\Theta}_\mu$ in $\mathbb{H}^2(\rho)$ depending on the constant $d>0$. Here $\rho=-1$ and $\mu=1.25$. From left to right: arch type ($d=0.35$), fishtail type ($d=1.5$), deltoid type ($d=d_*\simeq 2.5$) and bridge type curves of the three possible cases ($d=8$, $d=\mu^2{\rm e}^2$ and $d=20$, respectively). Here we look $\mathbb{H}^2(\rho)$ as the upper sheet of the hyperboloid model viewed from the top of the $x_3$-axis}
		\label{curvash21}
	\end{centering}
\end{figure}

\begin{figure}[hbtp]
	\begin{centering}{	\includegraphics[width=0.25\textwidth]{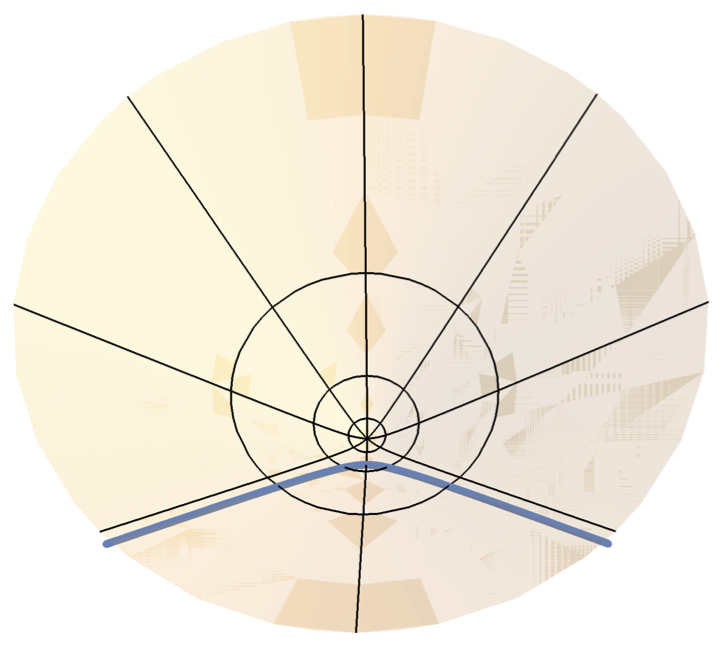}\quad  	\includegraphics[width=.25\textwidth]{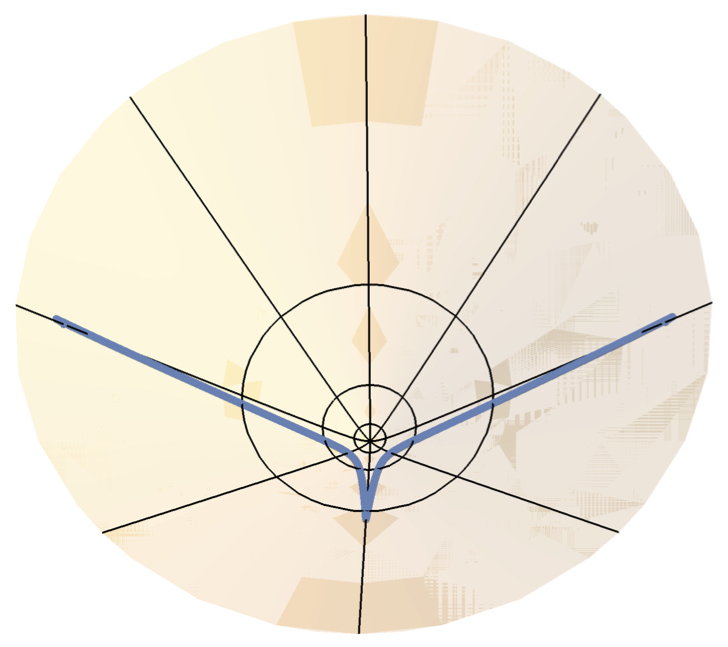}}
		\caption{Non-Lilienthal's type critical curves of $\mathbf{\Theta}_\mu$ in $\mathbb{H}^2(\rho)$. The first one is an hypercycle type curve for $d<\rho x_{-}^2\log x_{+}$ (here, $\rho=-1$, $\mu=1.25$ and $d=20$); while, the second one corresponds with $d>\rho x_{-}^2\log x_{+}$ and is an anchor type curve (here, $\rho=-1$, $\mu=1.25$ and $d=36$)}
		\label{curvash22}
	\end{centering}
\end{figure}

\section*{Acknowledgments} 
Rafael L\'opez was partially supported by MEC-FEDER grant no. MTM2017-89677-P. \'Alvaro P\'ampano was  partially supported by MINECO-FEDER grant MTM2014-54804-P, Gobierno Vasco grant IT1094-16 and by Programa Posdoctoral del Gobierno Vasco, 2018


\end{document}